\definecolor{dark_purple}{rgb}{0.4, 0.0, 0.4}
\definecolor{dark_green}{rgb}{0.0, 0.7, 0.0}
\def\H{{\mathcal H}}
\def\HH{{\underline{\mathcal H}}} %JCW
\def\N{{\mathbb N}}
\def\C{{\mathbb C}}
\def\R{{\mathbb R}}
\def\Z{{\mathbb Z}}
\def\CC{\underline{\mathbb C}} %Redefined JCW
\def\v{\varphi}
\def\a{\alpha}
\def\u{{\bf u}}
\def\z{\zeta}
\newcommand{\Ss}{\mathcal{S}} %JCW For $\Ss_i(T)$
\newcommand{\Ker}[1]{\mathsf{Ker}~ }
\newcommand{\codim}[1]{\mathsf{codim}~ }
\renewcommand{\subset}{\subseteq} %JCW
\newtheorem{theorem}{Theorem}[section]
\newtheorem{proposition}  [theorem]  {Proposition} %JCW
\newtheorem{definition}   [theorem]  {Definition} %JCW
\newtheorem{lemma}        [theorem]  {Lemma} %JCW
\newtheorem{corollary}    [theorem]  {Corollary}
\newtheorem{example}      [theorem]  {Example} %JCW
\newtheorem{remark}		  [theorem]  {Remark} %JCW To make numbering sequential
\newcommand{\wt}{\widetilde}
\DeclareMathOperator{\image}{Im} %Image
\DeclareMathOperator{\U}{U} %unitary group
\DeclareMathOperator{\SU}{SU} %special unitary group %JCW
\DeclareMathOperator{\spa}{span} %span
\newcommand{\ii}{\mathrm{i}} %sqrt{-1}
\newcommand{\eu}{\mathrm{e}} %2.718...
\newcommand{\GL}{\mathrm{GL}}
\newcommand{\gl}{\mathfrak{gl}}
\newcommand{\CP}{{\mathbb C}P}
\newcommand{\RP}{{\mathbb R}P}
\newcommand{\pa}{\partial}
\newcommand{\dbar}{\overline{\partial}}
\newcommand{\zbar}{\bar{z}}
\newcommand{\ov}{\overline}
\newcommand{\ga}{\gamma}
\newcommand{\la}{\lambda}
\begin{document}

\title[Harmonic maps and shift-invariant subspaces]
{Harmonic maps and shift-invariant subspaces}
\author[A. Aleman]{Alexandru Aleman}
\address{Lund University, Mathematics, Faculty of Science, P.O. Box 118, S-221 00 Lund, Sweden}
\email{alexandru.aleman@math.lu.se}
\author[R. Pacheco]{Rui Pacheco}
\address{Centro de Matem\'{a}tica e Aplica\c{c}{\~{o}}es (CMA-UBI), Universidade da Beira Interior, 6201 -- 001
Covilh{\~{a}}, Portugal.}
\email{rpacheco@ubi.pt}
\thanks{The second author was partially supported by Funda\c{c}\~{a}o para a Ci\^{e}ncia e Tecnologia through the project UID/MAT/00212/2019.}
\author[J.C. Wood]{John C. Wood}
\address{School of Mathematics, University of Leeds, LS2 9JT, G.B.}
\email{j.c.wood@leeds.ac.uk}
\thanks{}

\keywords{harmonic maps, Riemann surfaces, shift-invariant subspaces}
\subjclass[2010]{Primary 58E20; Secondary 47B32, 30H15, 53C43}

\maketitle
\begin{abstract} We investigate in detail the connection between harmonic maps from Riemann surfaces into the unitary group $\U(n)$ and their Grassmannian models: these are families of shift-invariant subspaces of $L^2(S^1,\C^n)$. With the help of operator-theoretic methods we derive a criterion for finiteness of the uniton number which has a large number of applications discussed in the paper.
\end{abstract}

\section{Summary of results} \label{sec:statement}

For harmonic maps from Riemann surfaces to $\U(n)$ we give some new criteria for finiteness of the uniton number involving an operator arising in the Grassmannian model, see Theorem \ref{finite uniton}.  We then give some applications including a criterion for finiteness of the uniton number for harmonic maps given by a constant \emph{potential} in the sense of \cite{DPW}, see Theorem \ref{constant potentials}, which extends a result in \cite{burstall-pedit}.  Some geometrical examples follow in \S  \ref{subsec:further} involving the \emph{first return map} which gives a simple proof that \emph{superconformal} harmonic maps are never of finite uniton number (Proposition \ref{prop:superconf} and Corollary \ref{cor:superm-sphere}).  We also give conditions for finiteness of the uniton number for more general diagrams (Theorem \ref{th:external}).
 On the way, we obtain a new  description of the Grassmannian model:  Proposition \ref{Grass-model} provides an intrinsic characterization of shift-invariant subspaces that can be represented by a smooth loop in $\U(n)$, and Theorem \ref{d-bar-invariance} generalizes this to subbundles of shift-invariant subspaces. These two results give an alternative to the approach in \cite[\S 7.1]{pressley-segal} and we hope they are of independent interest.   We thank Martin Guest for comments on this and for pointing out to us the interpretation of some of our constructions in terms of $D$-modules.

\section{Introduction and Preliminaries} \label{sec:intro}
This section is written with the needs of the functional analysis community in mind: much of it will be familiar to geometers working in harmonic maps.
A smooth map $\v$ between two Riemannian manifolds $(M,g)$ and $(N,h)$ is said to be \emph{harmonic} if it is a critical point of the energy functional
$$ E(\v, D)=\frac{1}{2}\int_D|d\v|^2\omega_g$$
for any relatively compact $D$ in $M$, where $\omega_g$ is the volume measure, and $|d\v|^2 $ is the Hilbert-Schmidt  norm of the differential  of $\v$.
Using coordinates it is easy to see that
$E(\v, D)$ is the natural  generalization of the classical integral of Dirichlet in $\mathbb{R}^d$:
$$
D[u]=\int_\Omega |\nabla u|^2\, d V, \qquad \Omega\subset \R^d.
$$

Recall that a \emph{Riemann surface} is a $2$-dimensional manifold equipped with \emph{complex charts} $M \supseteq V \to U \subseteq \C$ such that any two charts are related by a complex analytic function between open subsets of $\C$, see, for example, \cite{forster}.  The image in $U$ of a point $z \in V \subseteq M$ under a complex chart is called its \emph{(complex) coordinate} (with respect to that chart); as is normal we shall denote this by the same letter $z$. Because of conformal invariance, see, for example, \cite[\S 1.2]{wood-60}, the above energy functional, and so the notion of \emph{harmonic map from a Riemann surface}, is well defined.

In this paper we consider harmonic maps from a Riemann surface $M$ into the group $\U(n)$ of unitary matrices of order $n$.  Such maps are interesting for many reasons. For example (see \cite{svensson-wood-unitons}), when $M$ coincides with the Riemann sphere $S^2$,  they are equivalent to harmonic maps of finite energy from the plane,  they  provide a nonlinear $\sigma$-model for particle physics \cite{zakrzewski}, and  they give minimal branched immersions  in the sense of \cite{GOR}. As any compact Lie group has a totally geodesic embedding into $\U(n)$ for some $n$, these maps include harmonic maps into compact Lie groups and, in particular, harmonic maps into symmetric spaces. There is a vast literature on the subject (for example, see \cite{eells-lemaire, urakawa} for the general theory and \cite{svensson-wood-unitons, wood-60} for some background relevant to this paper).

Given a smooth map $\v:M\to \U(n)$,  one  can use  a standard variational argument to derive an equivalent differential condition for harmonicity. More precisely (see \cite{uhlenbeck} or \cite[Sec.~3.1]{svensson-wood-unitons}), consider the matrix-valued 1-form
\begin{equation}\label{Aphi}
\tfrac{1}{2}\v^{-1}d\v := A^\v_z d z+A^\v_{\bar{z}}d\bar{z},
\end{equation}
where $z$ is a  local  (complex) coordinate on $M$. Then it turns out that $\v$  is harmonic if and only if
\begin{equation}\label{harmcond} (A^\v_z)_{\bar{z}}+(A^\v_{\bar{z}})_z=0.\end{equation}

This equation, as for all our key equations below, is invariant under change of local complex coordinate, an extension of the usual conformal invariance of the Laplacian.

We remark that, since the right-hand side of \eqref{Aphi} has values in the Lie algebra $\u(n)$ of $\U(n)$, $A^\v_{\bar{z}}$ is minus the adjoint of $A^\v_z$.  Also, computing directly from $A^\v_z = \frac{1}{2}\v^{-1}\v_z$ and $A^\v_{\bar{z}} = \frac{1}{2}\v^{-1}\v_{\bar{z}}$ we obtain a further equation called the \emph{integrability equation},
\begin{equation}\label{integrability}
(A^\v_z)_{\bar{z}}-(A^\v_{\bar{z}})_z= 2[A^\v_z, A^\v_{\bar{z}}],
\end{equation}
valid for any smooth map $\v$.

In her study of harmonic maps \cite{uhlenbeck}, K.~Uhlenbeck introduced the notion of an \emph{extended solution}, which is a smooth map $\Phi:S^1\times M\to \U(n)$ satisfying $\Phi(1,\cdot)=I$ and such that, for every local coordinate $z$ on $M$, there are $\gl(n,\C)$-valued maps $A_z$ and $A_{\bar{z}}$ for which
\begin{equation}\label{extsol}
\Phi(\la,\cdot)^{-1}d\Phi(\la,\cdot)=(1-\la^{-1})A_z d z+(1-\la)A_{\bar{z}}d\bar{z}.
\end{equation}
The introduction of a `spectral parameter' $\la$ is a key idea in integrable systems, see \cite{guest-book} for background and references.  Note that we can consider $\Phi$ as a map from $M$ into the \emph{loop group} of $\U(n)$ defined by
$\Omega\U(n) = \{\gamma:S^1 \to \U(n) \text{ smooth}: \gamma(1) = I\}$. If $\Phi$ is an extended solution, then
 $\v=\Phi(-1,\cdot)$ is a harmonic map with $A^\v_z=A_z$ and $A^\v_{\bar{z}}=A_{\bar{z}}$.
 Conversely,  for a given harmonic map $\v:M\to \U(n)$, an extended solution with the property that
$$
\Phi^{-1}(\la,\cdot)d\Phi(\la,\cdot)=(1-\la^{-1})A^\v_zd z+(1-\la)A^\v_{\bar{z}}d\bar{z}
$$
is said to be \emph{associated} to $\v$, and we have
$$
\Phi(-1,\cdot)=u\v
$$
for some constant $u\in \U(n)$.
If $M$ is simply connected, the existence of extended solutions is guaranteed  by (and is actually equivalent to, see \cite{uhlenbeck}) the harmonicity condition  \eqref{harmcond}. The solution is unique up to multiplication from the left by a constant loop, i.e., a $\U(n)$-valued function on $S^1$, independent of $z\in M$.
Moreover (see \cite[Thm 2.2]{uhlenbeck} and Remark \ref{rem:Grauert}), the extended solution can be chosen to be a smooth map, or even holomorphic in $\lambda\in \C\setminus\{0\}$ and real analytic in $M$.

We shall make use of a method
called the \emph{Grassmannian model} \cite{segal}, which associates to an extended solution $\Phi$ the family of closed subspaces $W(z),~z\in M,$   of  $L^2(S^1,\C^n)$, defined by
\begin{equation}\label{W-def}
W(z)=\Phi(\cdot,z)\H_+,
\end{equation}
where  $\H_+$ is the usual Hardy space of $\C^n$-valued functions, i.e., the closed subspace of  $\H := L^2(S^1,\C^n)$ consisting of Fourier series whose negative coefficients vanish.
 Note that the subspaces $W(z)$ form the fibres of a smooth bundle $W$ over the Riemann surface (which is, in fact, a \emph{subbundle} of the \emph{trivial bundle}
$\HH := M \times L^2(S^1,\C^n)$, see \S \ref{subsec:grass-model}.
Let us denote by $\partial_z$ and $\partial_{\bar{z}}$ the derivatives with respect to  $z$ and $\bar{z}$ on $M$, respectively, and by $S$ the forward shift on $L^2(S^1,\C^n)$:
$$(Sf)(\la)=\la f(\la), \qquad \la\in S^1.$$
If $f:S^1\times M\to\C^n$ is differentiable in the second variable and satisfies $f(\cdot,z)\in W(z),~z\in M$, it follows from \eqref{extsol} that
\begin{equation}\label{W-eq}
 S\partial_zf(\cdot,z)\in W(z),  \quad \partial_{\bar{z}}f(\cdot,z)\in W(z),  \end{equation} i.e., in terms of  differentiable sections we have
\begin{equation}\label{propW}
S\partial_zW(z)\subset W(z),\quad \partial_{\bar{z}}W(z)\subset W(z),
\end{equation}
which we shall often abbreviate to $S\partial_zW\subset W$ and
$\partial_{\bar{z}}W\subset W$.
In fact, these equations are equivalent to \eqref{extsol} see \cite{segal, guest-book}.
An obvious  advantage of this approach is that it yields coordinate-free equations, since the derivatives of the coordinates involved in the chain rule  are absorbed in the corresponding subspaces. On the other hand, the objects involved here are shift-invariant subspaces of $L^2(S^1,\C^n)$, and using them in order to develop a qualitative theory of the solutions of \eqref{extsol} leads to a number of challenging problems, due to the fact that, in general,  Uhlenbeck's extended solutions cannot be found explicitly.
Moreover, alternative approaches to the theory of harmonic maps lead to  more general representations of
 the bundles $W$.  For example, \emph{extended framings} associated to primitive harmonic maps into $k$-symmetric spaces (see \cite{aleman-pacheco-wood-symmetry}),
or the Dorfmeister--Pedit--Wu construction \cite{DPW}, lead naturally to  bundles  $W$, satisfying   \eqref{W-eq}, and  given by
\begin{equation}\label{gen-W}W(z)=\psi(\cdot, z)\H_+,\end{equation}
where $\psi:S^1\times M\to \GL(n,\C)$ is a smooth map.
The bundles obtained this way are characterized in \cite[\S 7.1]{pressley-segal};  see Theorem \ref{d-bar-invariance} for an alternative approach.
Of course, \eqref{W-eq} implies that the negative Fourier coefficients of
$\lambda\psi^{-1}(\la,z)\partial_z\psi(\la,z)$ and of  $\psi^{-1}(\la,z)\partial_{\bar{z}}\psi(\la,z)$ vanish, but, in general, it is not clear which maps $\psi$ occur  this way. For example (see  \cite{DPW}), this condition is fulfilled if the \emph{potential} $\psi^{-1}(\la,z)\partial_z\psi(\la,z)$ is holomorphic in $z\in M$.

The Iwasawa decomposition of loop groups  \cite[Theorem (8.1.1)]{pressley-segal} implies that $W(z)=\Phi(\cdot,z)\H_+$, with $\Phi:S^1\times M\to \U(n)$ smooth; given such a $\Phi$, it is easy to verify that  \eqref{W-eq} implies that $\Phi\,\Phi^{-1}\!(1,\cdot)$ is an extended solution.  However, $\Phi$ cannot usually  be determined explicitly  except in the finite uniton case, cf.\ \cite{svensson-wood-unitons}, and in many cases we
would like to develop our study based on the properties of $\psi$ or $W$.

The main purpose of this paper is to explore the connection between harmonic maps which possess extended solutions, and the associated infinite-dimensional family (i.e., bundle) $W = W(z)$ of shift-invariant subspaces
\eqref{W-def}.
By extension we shall call the family $W(z)$ an \emph{extended solution} as well.
 It turns out that this point of view leads to very interesting new results about harmonic maps  and  to a beautiful interplay between differential geometry and operator theory.  Recall that all harmonic maps from a simply connected surface have an extended solution locally, and globally on a simply connected domain; for arbitrary domains any two extended solutions, and so the resulting $W$, differ by premultiplication by a loop which does not change the criteria in our main results.

In this paper, we shall focus on the important question of finiteness of the uniton number; in \cite{aleman-pacheco-wood-symmetry}, we take up the question of \emph{symmetry}.  We say  that the  harmonic map $\v:M\to \U(n)$ has \emph{finite uniton number} if there exists an extended solution $\Phi_0$  associated to $\v$, \emph{which is defined on the whole of $M$} and is a trigonometric polynomial in $\lambda\in S^1$, i.e.,
there exist $r,s\in \N$ such that
\begin{equation} \label{finite-uniton-no}
\Phi_0(\la,z)=\sum_{k=-r}^s C_k(z)\la^k, \quad C_k:M \to \gl(n,\C) \text{ smooth}.
\end{equation}
Consequently, an  arbitrary extended solution $\Phi$  associated to $\v$ will have the form $v\Phi_0$ with $v:S^1\to \U(n)$ independent of $z\in M$ (and
satisfying $v(1)=v(-1)=I$);  we shall say that such a $\Phi$, and the corresponding $W=\Phi\H_+$, are of finite uniton number, too.
The standard factorization theory of matrix-valued functions on  $S^1$ (see \cite{Nikolskii,Peller}) shows that such functions are essentially polynomial  Blaschke--Potapov products depending on $z\in M$.  More precisely,  there exist  subbundles $\alpha_1,\ldots,\alpha_m$ of $M\times\C^n$  such that
\begin{equation}\label{bp}
\Phi(\la,z)=v(\la)\prod_{j=1}^{m}(\pi_{\a_j(z)}+\la\pi_{\a_j(z)}^\perp),
\end{equation}
where $\pi_\a$ denotes the orthogonal (Hermitian) projection onto the subspace $\a$ of $\C^n$ and $\pi_\a^\perp=\pi_{\a^\perp}$.
It is well known that, for  $n>1$,  the factors in the product above are not necessarily unique, and this  is one of the major technical  difficulties in dealing with these objects. However, it is shown in \cite{uhlenbeck} that the factors can be chosen such that for $k\le m$, the partial products
$$
\Phi_k(\la,z)=\prod_{j=1}^{k}(\pi_{\a_j(z)}+\la\pi_{\a_j(z)}^\perp)
$$
are  extended solutions as well. The Blaschke--Potapov factors involved in such a factorization are called {\it unitons} of   $\Phi$ and \eqref{bp} is its {\it uniton factorization}.
For $\la=-1$ this yields a factorization of the original harmonic map $\v$, also called a uniton factorization,  and the corresponding factors are called \emph{unitons of\/ $\v$}.  Not every factorization of the Blaschke--Potapov product above yields unitons: a complete description of those which do yield unitons  in terms of `$\la$-filtrations' has been obtained in \cite{svensson-wood-unitons}.

As a special case we say that $\Phi$ and the corresponding $W = \Phi\H_+$ is \emph{$S^1$-invariant} if $f \in W$ implies that $f_\mu \in W$ for all $\mu \in S^1$, where $f_\mu(\lambda)=f(\mu\lambda)$, \ $\la \in S^1$. As in \cite[Proposition 10.4]{uhlenbeck}, an $S^1$-invariant extended solution has a uniton factorization with nested unitons $\alpha_1 \subset \alpha_2 \subset \cdots \subset \alpha_m$, which are \emph{holomorphic}, i.e., $\pa_{\bar{z}} (\alpha_i) \subseteq \alpha_i$, and \emph{superhorizontal},
i.e.,
\begin{equation} \label{superhor}
\pa_z(\alpha_i) \subseteq \alpha_{i+1}, \quad i=1,\ldots, m-1,
\end{equation}
cf.\ \cite[\S 2.3, 3.3]{svensson-wood-unitons}.

One of the remarkable results in \cite{uhlenbeck} is that, for the Riemann sphere $M=S^2$, every harmonic map has finite uniton number.  More generally, on a \emph{compact} Riemann surface $M$, any extended solution has the form \eqref{bp} \cite{OV}, so any harmonic map \emph{which has an extended solution on $M$} is of finite uniton number.
By the uniformization (or Riemann mapping) theorem \cite{forster}, all Riemann surfaces $M$ have a \emph{simply connected covering space} $\Pi:\tilde{M} \to M$ where $\tilde{M}$ is the Riemann surface $S^2$, $\C$ or the open unit disk of $\C$.
The projection $\Pi$ is holomorphic so that a map $\v$ from $M$ is harmonic if and only if $\v \circ \Pi$ is harmonic. \emph{All} harmonic maps have an extended solution on a simply connected covering space, for example, harmonic maps from a torus can be thought of as doubly periodic harmonic maps on $\C$, and since $\C$ is simply connected, have extended solutions on $\C$.
If there is an extended solution which is doubly periodic too, and so descends to the torus, then, by the above result for compact surfaces, the harmonic map is of finite uniton number; otherwise it is not, as in the case of Clifford solutions,
cf.\ Example \ref{ex:Clifford}.  In fact, we show in \S \ref{subsec:applns ha maps} that the notion of finite uniton number is \emph{local}: a harmonic map $\v$ from a Riemann surface is of finite uniton number if and only if its restriction to an open set is of finite uniton number, and this holds if and only if its composition with $\Pi$ is of finite uniton number.  We should also point out that on non-compact Riemann surfaces there always exist harmonic maps which are not of finite uniton number
(see  \S  \ref{subsec:constant potentials} below).

Motivated by this discussion, our main result in this direction provides a necessary and sufficient criterion for finiteness of the uniton number which will be proved in \S  \ref{subsec:the criterion}, Theorem \ref{finite uniton}. At the level of the Grassmannian model  the criterion  is easy to explain: it simply requires that the sequence $$W_{(i)}:=W+\partial_z W+\cdots+\partial^i_zW$$
stabilizes.  This sequence is a key tool in our approach. In \S \ref{subsec:the Gauss sequence} we prove that each $W_{(i)}$ extends to a holomorphic subbundle of the trivial bundle $\HH=M\times L^2(S^1,\C^n)$. If $W=\psi\H_+$ as in \eqref{gen-W}, the result can be rephrased in terms of the differential operators $T_\psi(\la)=\partial_z+ \psi^{-1}\partial_z\psi$ and it asserts that the uniton number is finite if and only if the maximum power of $\lambda^{-1}$ in $T_\psi^i(\lambda)$ remains bounded for $i\in \mathbb{N}$.
 It turns out that this general criterion has a number of interesting applications.  For example, in certain extreme cases we can obtain necessary and sufficient conditions for the uniton number to be finite using only the bundle $\image{A_z^\varphi} := A_z^\varphi\C^n$ and its $\partial_z$-derivative (see \S \ref{subsec:direct applications} for a more general version). In particular, this will lead to a complete characterization of such harmonic maps into $\U(2)$ (Corollary \ref{U(2)}). Another  important special case where the criterion can be successfully applied is that of \emph{constant potentials}, i.e. when $W=\psi\H_+$ as in \eqref{gen-W}, with $\psi^{-1}\partial_z\psi$ constant in $z\in M$.  In Theorem \ref{constant potentials} we prove that the underlying harmonic map $\varphi$ has finite uniton number if and only if $$(\lambda,\mu)\mapsto \det(\mu I-\psi^{-1}\partial_z\psi)$$ is holomorphic in $\{(\la,\mu)\in \C^2:~|\la|<1\}$. This extends   to $\U(n)$ a result proved for $\SU(2)$  by F.E.~Burstall and F.~Pedit \cite{burstall-pedit}.

In \S \ref{subsec:further}, we take a more geometrical approach, interpreting the operator $T_{\varphi}(\lambda)$ in terms of $A^{\v}_z$ and a derivation $D^{\v}_z$ associated to $\v$.  This is particularly effective for harmonic maps into Grassmannians.  Iterating a construction akin to taking a Gauss map generates a diagram of subbundles and maps between them.  If this diagram terminates, then the harmonic map is of finite uniton number (Proposition \ref{prop:str-iso}).  Otherwise, it closes up and the composition of maps around the resulting cycle is called \emph{the first return map}. The nilpotency of this map gives a necessary condition for the uniton number to be finite (Theorem \ref{th:first-return}).  In particular, an interesting class of harmonic maps called \emph{superconformal}, which include the famous Clifford solutions on $\C$ and associated Clifford tori, are easily seen to be not of finite uniton number.

For more complicated diagrams of subbundles and maps, we generalize the above work to give conditions for finiteness of the uniton number in terms of the composition of maps around cycles, see Theorem \ref{th:external}.

\section{The bounded powers criterion}

 In this section we establish a criterion for whether an extended solution is of finite uniton number or not.  We then give some applications.
 First we introduce a description of the Grassmannian model, which is the main tool used in our approach.

\subsection{The Grassmannian model} \label{subsec:grass-model}

 We recall the Grassmannian model of Pressley and Segal \cite[\S 7 \& 8]{pressley-segal}: Let $\H_-=\H_+^\perp$, and let $P_\pm$ denote the orthogonal projections of $\H=L^2(S^1,\C^n)$ onto $\H_\pm$.
Set $\mathrm{Gr}^{(n)}$ equal to the Hilbert manifold of closed shift-invariant subspaces $V$ of $\H$ such that
$P_+:V\to \H_+$ is a Fredholm operator and $P_{-}:V\to \H_{-}$ is a Hilbert-Schmidt operator and set
$\mathrm{Gr}_{\infty}^{(n)}$ equal to the subspace of $\mathrm{Gr}^{(n)}$ satisfying the smoothness condition (iii) below.
Then Pressley and Segal show that the map $\gamma \mapsto \gamma\H_+$ is a bijection from the smooth loop group $\Omega \U(n)$ to $\mathrm{Gr}_{\infty}^{(n)}$. We here replace the Fredholm--Hilbert-Schmidt condition with the following simpler condition:
\begin{center}
 $S|V$ is a \emph{pure shift}, i.e. $\bigcap_{k \in \N} S^k V = \{0\}$, \\
of \emph{multiplicity} $n$, that is $\dim V\ominus SV=n.$
\end{center}
Here $V\ominus SV=V\cap (SV)^\perp$. Note that this condition holds if and only if $V$ has the form $V=\gamma \H_+$ with $\gamma(\la)\in \U(n)$ a.e. on $S^1$; the `if' direction is trivial, and the `only if' direction follows from
\cite[Lecture VI]{He}).

The following gives conditions for smoothness of $\gamma$.
\begin{proposition} \label{Grass-model}
Let $V$ be a closed shift-invariant subspace of $\H= L^2(S^1,\C^n)$,
such that $S|V$ is a pure shift of multiplicity $n$. Then
the following are equivalent $:$
\begin{enumerate}
	\item[(i)]  $V=\gamma \H_+$ for a unique $\gamma\in  \Omega\U(n);$
	\item[(ii)] $V\ominus SV$ is a subspace of $C^\infty(S^1,\C^n)$ of dimension  $n;$
\item[(iii)] The subspaces $P_+V^\perp$ and $P_-V$ are contained in $C^\infty(S^1,\C^n)$.
\end{enumerate}
\end{proposition}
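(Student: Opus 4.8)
The plan is to use the background result cited just before the statement (the Beurling--Lax--Halmos description in \cite{He}) to fix, once and for all, a measurable $\U(n)$-valued function $\gamma$ with $V=\gamma\H_+$, unique up to right multiplication by a constant unitary; here multiplication $M_\gamma$ is a unitary operator on $\H$ commuting with $S=M_\lambda$. The whole argument then rests on three elementary identities that translate the geometry of $V$ into Fourier data of $\gamma$. First, since $SV=\gamma S\H_+$, the wandering subspace is $V\ominus SV=\gamma(\H_+\ominus S\H_+)=\gamma\,\C^n$, i.e.\ it is exactly the span of the columns $\gamma e_1,\dots,\gamma e_n$ of $\gamma$. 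Second, $V^\perp=\gamma\H_-$. Third, $P_-V=\operatorname{Ran}(P_-M_\gamma|_{\H_+})$ and $P_+V^\perp=\operatorname{Ran}(P_+M_\gamma|_{\H_-})$ are the ranges of the two Hankel operators built from $\gamma$. With these in hand I would establish (i)$\Leftrightarrow$(ii) and (ii)$\Leftrightarrow$(iii), which gives the full equivalence.

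For (i)$\Leftrightarrow$(ii): the first identity shows that $V\ominus SV\subseteq C^\infty(S^1,\C^n)$ is equivalent to every column $\gamma e_p$ being smooth, hence to $\gamma$ itself being smooth (and then, being unitary a.e.\ and continuous, unitary everywhere). Given such a smooth representative I would normalise by replacing $\gamma$ with $\gamma\,\gamma(1)^{-1}$ to land in $\Omega\U(n)$. Uniqueness follows from the standard fact that a unitary-valued loop $\beta$ with $\beta\H_+=\H_+$ has $\beta,\beta^{*}\in H^\infty$ and is therefore constant, so the normalisation $\beta(1)=I$ forces $\beta=I$. The implication (i)$\Rightarrow$(ii) is immediate.

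The direction I expect to require the cleverest observation, but to be in fact the easiest, is (iii)$\Rightarrow$(ii): although (iii) only controls two projections, evaluating the Hankel operators on constants exposes the Fourier coefficients $\hat\gamma_m$ directly. Indeed $P_-(\gamma e_p)=\sum_{m<0}\hat\gamma_m e_p\,\lambda^m$ lies in $P_-V\subseteq C^\infty$, so its coefficients decay rapidly; letting $p$ run over a basis shows $\hat\gamma_m\to 0$ rapidly as $m\to-\infty$. Symmetrically, $P_+(\lambda^{-1}\gamma e_p)=\sum_{k\ge 0}\hat\gamma_{k+1}e_p\,\lambda^{k}$ lies in $P_+V^\perp\subseteq C^\infty$ and controls the positive coefficients. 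Together these give rapid decay of $\hat\gamma_m$ for all $m\neq 0$, hence $\gamma$ is smooth and (ii) holds.

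The only genuinely analytic step, and the one I expect to be the main obstacle, is the forward implication (ii)$\Rightarrow$(iii): smoothness of $\gamma$ must be upgraded to smoothness of the \emph{entire} ranges $P_-V$ and $P_+V^\perp$, not merely of finitely many generators. For $v=\gamma h$ with $h\in\H_+$ an arbitrary $L^2$ element, the $l$-th coefficient ($l<0$) of $P_-v$ is $\sum_{j\ge 0}\hat\gamma_{l-j}\hat h_j$; using the rapid decay of $\hat\gamma$ together with the elementary bound $(1+j+|l|)^{-N}\le (1+|l|)^{-N/2}(1+j)^{-N/2}$ and Cauchy--Schwarz, this is $O\big((1+|l|)^{-N/2}\big)\norm{h}$ for every $N$, so $P_-v$ is smooth; the analogous Hankel estimate handles $P_+V^\perp$. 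A point to watch throughout is that $P_-V$ need not be closed, so every smoothness assertion must be read elementwise rather than for the closure. Combining the four implications establishes (i)$\Leftrightarrow$(ii) and (ii)$\Leftrightarrow$(iii), completing the proof.
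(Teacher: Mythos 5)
Your proposal is correct, and its overall architecture coincides with the paper's: both arguments pivot on the Beurling--Lax representer $\gamma$ (unitary a.e., from the pure-shift hypothesis via \cite{He}), the identities $V\ominus SV=\gamma\,\C^n$ and $V^\perp=\gamma\H_-$, and the recovery of the Fourier coefficients of $\gamma$ from the projections. Your (iii)$\Rightarrow$(ii) step is essentially identical to the paper's (iii)$\Rightarrow$(i): your $P_+(\la^{-1}\gamma e_p)=\sum_{k\ge 0}\hat\gamma_{k+1}e_p\la^k$ is exactly the paper's identity $P_+S^{-1}\gamma e=\la^{-1}\bigl(P_+\gamma e(\la)-\hat\gamma_0\bigr)$, and both reconstruct $\gamma e$ from $P_-\gamma e$, $P_+S^{-1}\gamma e$ and $\hat\gamma_0$. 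The one genuine divergence is the step you flagged as the main obstacle, (ii)$\Rightarrow$(iii): the paper dispatches it in one line by writing, for $f\in\H_-$, $P_+\gamma f(\la)=\int_{S^1}\frac{(\gamma(\z)-\gamma(\la))f(\z)}{\z-\la}\frac{d\z}{2\pi i}$, whose kernel $\frac{\gamma(\z)-\gamma(\la)}{\z-\la}$ is smooth precisely because $\gamma$ is, so smoothness of the whole range (for arbitrary $L^2$ data $f$, including the uniform control you obtain by hand) is immediate; your route instead proves rapid decay of the Hankel coefficients $\sum_{j\ge 0}\hat\gamma_{l-j}\hat h_j$ via the splitting $(1+|l|+j)^{-N}\le(1+|l|)^{-N/2}(1+j)^{-N/2}$ and Cauchy--Schwarz. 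Both are valid; the paper's kernel trick is slicker and coordinate-free, while your estimate makes the quantitative dependence on $\norm{h}$ explicit. A final cosmetic difference: for (ii)$\Rightarrow$(i) the paper defers to the ``constant version'' of its Theorem on $\dbar$-invariance (orthonormalizing the wandering subspace plus the Wold--Kolmogorov decomposition), whereas you read smoothness of $\gamma$ off its columns in the pre-fixed Beurling--Lax representation and normalize by $\gamma(1)^{-1}$; these are the same argument in different packaging, and your uniqueness argument (a unitary loop $\beta$ with $\beta\H_+=\H_+$ has $\beta,\beta^*$ analytic, hence is constant, and $\beta(1)=I$ forces $\beta=I$) is the standard fact the paper cites from \cite{Nikolskii}.
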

\begin{proof} 	
	(i) $\Rightarrow$ (ii) is immediate, since $V\ominus SV=\{\gamma e:~e\in \C^n\}$. The reverse implication (ii) $\Rightarrow$ (i) is just the `constant version' of first part of the following theorem, so does not need a separate proof. 	
	To see that (i) $\Rightarrow$ (iii) note first that $V^\perp=\gamma\H_-$. Moreover, if $f\in \H_-$, then $\gamma(\la)f\in \H_-$, i.e. $P_+\gamma(\la)f=0$, for all $\la\in S^1$. Thus by the well known formula for $P_+$ (see \cite{Nikolskii}), we have	$$P_+\gamma f(\lambda)=P_+[(\gamma-\gamma(\lambda))f](\lambda)=\int_{S^1}\frac{(\gamma(\z)-\gamma(\la))f(\z)}{\z-\la}\frac{d\z}{2\pi i}.$$
Clearly, the integral on the right belongs to $C^\infty(S^1,\C^n)$. The proof that $P_-V\subset C^\infty(S^1,\C^n)$ is identical. Finally, if (iii) holds, $V=\gamma \H_+$ with $\gamma(\la)\in \U(n)$ a.e., in order to prove (i) it suffices to show that $\gamma\in C^\infty(S^1,\C^n)$. For every $e\in \C^n$ we have $P_-\gamma e,~P_+S^{-1}\gamma e\in C^\infty(S^1,\C^n)$. If $\gamma(\la)=\sum_{k\in \mathbb{Z}}\hat{\gamma}_k\la^k$, a direct computation  gives 	
$$P_+S^{-1}\gamma e=\la^{-1}(P_+\gamma e(\la)-\hat{\gamma}_0).$$
Thus $$\gamma e(\la)=P_-\gamma e(\la)+ P_+\gamma e(\la)= P_-\gamma e(\la)+ \la P_+S^{-1}\gamma e(\la)+\hat{\gamma}_0\in C^\infty(S^1,\C^n).$$
	 \end{proof}

\begin{remark}
{\rm (a) Proposition \ref{Grass-model} gives the following description: \emph{$\mathrm{Gr}_{\infty}^{(n)}$ is the set of closed shift-invariant subspaces $V$ of $\H$ with $S|V$ a pure shift of multiplicity $n$ such that one of the equivalent conditions (i), (ii) or (iii) in the proposition holds.}}

{\rm (b) From the proof of \cite[Theorem 8.3.2]{pressley-segal}, we see that the condition of $S|V$ being a pure shift of multiplicity $n$ is actually implied by the Fredholm--Hilbert-Schmidt condition. The reverse implication fails in general, but under the smoothness assumptions (ii) or (iii)  of Proposition \ref{Grass-model} the conditions are equivalent.}
\end{remark}

\begin{theorem}
	\label{d-bar-invariance} Let $V(z),~z\in M$ be a family of closed, shift-invariant subspaces of $L^2(S^1,\C^n)$, such that:
\begin{enumerate}	
	\item[(i)] $S|V(z)$ is a pure shift for all $z\in M$;
	\item[(ii)] the subspaces $$V(z)\ominus SV(z)=V(z)\cap (SV(z))^\perp$$ form a smooth subbundle of rank  $n$ of
	$M \times C^\infty(S^1,\C^n)$.
\end{enumerate}	
	 Then there exists a smooth $\U(n)$-valued function  $\psi$ on $S^1\times M$ with $\psi(1,z) = I$ such that $V(z)=\psi(\cdot,z)\H_+$. If, in addition,
\begin{enumerate}
	\item[(iii)] $\partial_{\bar{z}} V\subset V$,
\end{enumerate}
 then locally at every point of $M$, $V(z)=u(\cdot,z)\H_+$, with $u$ $\GL(n,\C)$-valued, smooth on $S^1 \times M$,  and holomorphic in $z\in M$. In particular, $V$ is a holomorphic subbundle of $\HH= M \times L^2(S^1,\C^n)$.
\end{theorem}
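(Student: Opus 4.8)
The plan is to prove the two assertions separately: the first by an explicit framing of the wandering subbundle, and the second by solving a $\bar\partial$-problem in the gauge $u=\psi h$.

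For the first assertion, fix $z_0\in M$ and, using (ii), choose on a neighbourhood a smooth frame of the rank-$n$ subbundle $L(z):=V(z)\ominus SV(z)$; applying Gram--Schmidt fibrewise I may assume it orthonormal, giving smooth maps $e_1(z),\dots,e_n(z)\in C^\infty(S^1,\C^n)$. I assemble these into the matrix-valued function $\gamma(\la,z)$ whose columns are $e_1(\la,z),\dots,e_n(\la,z)$. Since $S|V(z)$ is a pure shift of multiplicity $n$, the Wold decomposition gives $V(z)=\bigoplus_{k\ge 0}S^kL(z)$, and because $S^ke_j=\la^ke_j$ this says exactly that $\gamma(\cdot,z)\H_+=V(z)$. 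Orthonormality of $\{S^ke_i\}_{k\ge 0,\,1\le i\le n}$ means that for all $k,l\ge 0$ the $(k-l)$-th Fourier coefficient of $\la\mapsto\langle e_i(\la,z),e_j(\la,z)\rangle_{\C^n}$ equals $\delta_{kl}\delta_{ij}$; as $k-l$ runs over all of $\Z$ this forces $\langle e_i(\la,z),e_j(\la,z)\rangle=\delta_{ij}$ for a.e.\ $\la$, hence by continuity in $\la$ for all $\la$, i.e.\ $\gamma(\la,z)\in\U(n)$ for every $\la$. Setting $\psi(\la,z):=\gamma(\la,z)\gamma(1,z)^{-1}$ normalises $\psi(1,z)=I$ without changing the span; right multiplication by the $\la$-independent $\gamma(1,z)^{-1}$ also renders $\psi$ independent of the choice of orthonormal frame, so the local definitions agree on overlaps and patch to a global smooth $\U(n)$-valued $\psi$ with $V=\psi\H_+$. (The implication (ii)$\Rightarrow$(i) of Proposition \ref{Grass-model} is the constant case of this construction.)

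For the second assertion write $B_{\bar z}:=\psi^{-1}\partial_{\bar z}\psi$, smooth on $S^1\times M$. Testing (iii) on the smooth sections $f(\cdot,z)=\psi(\cdot,z)v$ with $v\in\H_+$ constant gives $\psi^{-1}\partial_{\bar z}f=B_{\bar z}v$, so $\partial_{\bar z}V\subset V$ forces $B_{\bar z}v\in\H_+$ for all $v\in\H_+$; that is, $B_{\bar z}(\cdot,z)$ has vanishing negative Fourier coefficients and extends holomorphically in $\la$ to $\D$, smoothly up to $\ov{\D}$. I then seek the frame in the gauged form $u=\psi h$. As $\partial_{\bar z}u=\psi(\partial_{\bar z}h+B_{\bar z}h)$, the requirement $\partial_{\bar z}u=0$ becomes the linear equation $\partial_{\bar z}h=-B_{\bar z}h$, and if moreover $h(\cdot,z)$ is holomorphic and invertible on $\ov{\D}$ then $h(\cdot,z)\H_+=\H_+$, whence $u(\cdot,z)\H_+=\psi(\cdot,z)\H_+=V(z)$, with $u$ smooth, $\GL(n,\C)$-valued and $\partial_{\bar z}$-closed.

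It remains to solve $\partial_{\bar z}h=-B_{\bar z}h$ near $z_0$ with $h(\cdot,z_0)=I$, keeping $h$ holomorphic in $\la$. I would work in the Fr\'echet space of functions of $z$ valued in matrices holomorphic in $\la\in\D$ and smooth up to $\ov{\D}$: the multiplication $h\mapsto B_{\bar z}h$ preserves this space (a product of $\la$-analytic functions is $\la$-analytic), and the $\bar\partial_z$-equation in one complex variable is solved by the Cauchy transform, which commutes with $\partial_{\bar\la}$ and hence again preserves holomorphy in $\la$. A Picard iteration then yields, on a small disc $U$ about $z_0$, a solution $h$ smooth on $\ov{\D}\times U$ with $h(\cdot,z_0)=I$; shrinking $U$ keeps $h$ uniformly close to $I$, hence invertible for every $\la\in\ov{\D}$, so $h^{-1}=\mathrm{adj}(h)/\det(h)$ is again holomorphic in $\la$ and smooth. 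The local trivialisations $u$ then equip $V$ with the structure of a holomorphic subbundle of $\HH$. I expect the genuine difficulty to be precisely this last step: carrying out the $\bar\partial$-solution so that it simultaneously respects the loop parameter (holomorphy in $\la$) and delivers a gauge invertible in the disc algebra, since only then is the identity $u\H_+=V$ preserved; the framing of the first part and the translation of (iii) into analyticity of $B_{\bar z}$ are by comparison routine.
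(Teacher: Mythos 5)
Your proposal follows the paper's proof in all essentials. The first half is exactly the paper's argument: orthonormalize a local smooth frame of $V(z)\ominus SV(z)$, use the Wold--Kolmogorov decomposition $V(z)=\bigoplus_{k\ge0}S^k\bigl(V(z)\ominus SV(z)\bigr)$ to get $V=\gamma\H_+$, and normalize by $\psi(\la,z)=\gamma(\la,z)\gamma(1,z)^{-1}$, which is frame-independent and hence global; your Fourier-coefficient verification that $\gamma(\la,z)\in\U(n)$ pointwise is a detail the paper delegates to the ``constant version'' in Proposition \ref{Grass-model}, and it is correct. The second half is also the paper's strategy: condition (iii) makes $B_{\bar z}=\psi^{-1}\partial_{\bar z}\psi$ take values in the functions with vanishing negative Fourier coefficients, one seeks $u=\psi h$ with $\partial_{\bar z}h=-B_{\bar z}h$ via the Cauchy transform on a small disc, the solution is uniformly close to $I$, hence invertible with $h(\cdot,z)\H_+=\H_+$, and holomorphy in $\la$ is preserved because the transform acts only in $z$.

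The one place where you diverge from the paper is the step you yourself flag as the crux, and there your argument has a soft spot. The smallness obtained by shrinking the disc controls only the \emph{sup-norm} of the integral operator: in the paper's notation, writing $C_\Delta F=\frac1{2\pi}\int_\C \chi_r D_\psi F(t+z)\,t^{-1}dm(t)$, the $C^k$-operator norm picks up derivatives of the cutoff $\chi_r$ (of size $r^{-j}$) and of $D_\psi$, and scales like $r^{1-k}$; so the fixed-point map is a contraction in $C^0$ but \emph{not} in the higher seminorms, and ``Picard iteration in the Fr\'echet space'' does not converge as stated. The paper circumvents exactly this: it proves $C_\Delta$ is \emph{compact} on each Banach algebra $\mathcal{B}_k$ (approximation by finite-rank operators $C_{h_j}$ with polynomial kernels), gets injectivity of $I+C_\Delta$ on $\mathcal{B}_k$ from its invertibility on $\mathcal{B}_0\supset\mathcal{B}_k$, and invokes the Fredholm alternative to conclude the $\mathcal{B}_k$-solutions exist and all coincide, yielding joint smoothness. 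Your sketch can be repaired without that machinery by an elliptic bootstrap --- the Cauchy transform gains a derivative in H\"older classes in $z$, and $\la$-derivatives are handled by differentiating the fixed-point equation, each derivative solving an equation with the same small-$C^0$-norm operator --- but some such argument must be supplied. Two minor points: the iteration does not give $h(\cdot,z_0)=I$ exactly (only $h$ close to $I$), though the theorem does not require it; and invertibility of $h(\la,z)$ for all $|\la|\le1$, needed for $h\H_+=\H_+$, follows from the boundary estimate $\|h-I\|_\infty<\tfrac12$ on $S^1$ together with the maximum principle applied to the analytic extension, which is worth saying explicitly.
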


\begin{proof} Orthonormalize a locally smooth basis of $V(z)\ominus SV(z)$ to obtain the columns of a locally defined, smooth $\U(n)$-valued  function $v$ such that $V(z)\ominus SV(z)=v(\cdot, z)\C^n$. By (i) and the Wold--Kolmogorov decomposition \cite[p.~17]{Nikolskii}, we have $$V(z)=\bigoplus_{k\ge 0}S^k(V(z)\ominus SV(z))=
	\bigoplus_{k\ge 0}S^kv(\cdot, z)\C^n=v(\cdot,z)\H_+.$$
	It is also well known that the representation $V=v\H_+$ is unique up to multiplication from the right by a $\U(n)$-valued  function constant in $\la\in S^1$ (see \cite[p.~17]{Nikolskii}, for example). Therefore, for $v$ as above,  the function $\psi(\la,z)=v(\la,z)v^{-1}(1,z)$ is well defined, smooth on $S^1 \times M$, has $\psi(1,z) = I$, and satisfies $V(z)=\psi(\cdot,z)\H_+$.

In order to verify the second assertion, we shall prove that there exist locally  defined (in $z\in M$) $\GL(n,\C)$-valued smooth functions $F$ with $F(\cdot,z)e\in \H_+,~e\in \C^n$,   such that $\psi F$ is holomorphic in $z$.

 This gives the desired local representation with $u=\psi F$. For the last assertion, note first that from this representation it follows that  $V$ is a holomorphic Hilbert bundle on $M$ (see \cite[p.~43]{Lang}). Moreover, $V(z)$ has the locally holomorphic Riesz basis \cite[p.~132]{Nikolskii} $\{S^ku(\cdot,z)e_j:~1\le j\le n,~k\ge 0\}$, where $\{e_j:~1\le j\le n\}$ is the canonical basis in $\C^n$, which can be completed to the locally holomorphic Riesz basis of $L^2(S^1,\C^n)$ given
by $\{S^ku(\cdot,z)e_j:~1\le j\le n,~k\in \mathbb{Z}\}$; thus $V$ is a holomorphic subbundle of $\HH$.

We transfer the problem of finding a function $F$ as above to the complex plane by using a local coordinate which we also denote by $z \in U \subset \C$.
We now choose an open disc $\Delta=\{|z-z_0|<r\},~r>0$ in $U$; the condition that $z\to \psi(\la,  z) F(\la, z)$ is holomorphic in $\Delta$
is $$\partial_{\bar z}\bigl(\psi(\la,  z) F(\la, z)\bigr)=0.$$ If we denote by
 $$D_\psi(\la,z)=\psi^{-1}(\la,  z)\partial_{\bar z}\psi(\la,  z)$$ this becomes equivalent to the equation
\begin{equation}\label{original-d-bar-eq}\partial_{\bar z}F(\cdot, z)=-D_\psi(\cdot,z) F(\cdot,z).\end{equation}
We seek a smooth, $\GL(n,\C)$-valued solution  $F$   of  \eqref{original-d-bar-eq}  such that, for every $z \in \Delta$, $F(\cdot,z)$ belongs to $\Lambda^+(\gl(n,\C))$,  the closed subspace of  $L^2(S^1,\gl(n,\C))$ consisting of functions whose negative Fourier coefficients vanish.

The equation \eqref{original-d-bar-eq} can be handled with help of an integral equation which we shall introduce  below.   \\
Assume that the disc $\{|z-z_0|\le 2r\}$ lies in $U$, and let
 $\chi:\C\to [0,1]$ be a smooth function with $\chi(z)=1$ when $|z|\le 1$, $\chi(z)=0$ when $|z|\ge\frac4{3}$. Set $\chi_r(z)=\chi(\frac{z-z_0}{r})$.  Given a continuous,  $\gl(n,\C)$-valued function $F$ on  $\{|z-z_0|<2r\}$,  extend the product $\chi_rD_\psi F$ to be zero outside $\{|z-z_0|<\frac{4r}{3}\}$, and define
\begin{equation}\label{c-delta}C_\Delta F= \frac1{\pi}\int_\C\frac{\chi_r D_\psi F(t)}{t-z}dm(t)= \frac1{2\pi}\int_\C\frac{\chi_r D_\psi F(t+z)}{t}dm(t),\end{equation}
where $m$ denotes the area measure on $\C$.
Note that, if $G_0\in \gl(n,\C)$ is constant, a smooth solution $F$ of \begin{equation}\label{c-transform-eq} F+C_\Delta F=G_0\end{equation} will satisfy  \eqref{original-d-bar-eq}. Indeed, the basic property of the Cauchy transform:
$$\partial_{\bar z} \frac1{\pi}\int_\C \frac{H(t)}{t-z}dm(t)=\partial_{\bar z}\frac1{2\pi i}\int_\C \frac{H(t)}{t-z}dt\wedge d\overline{t} =H(z),$$
valid for every compactly supported continuous function $H$ on $\C$  (see for example, \cite{bell}),  implies that  on $\Delta$ we have
$$\partial_{\bar z}F+ D_\psi F=\partial_{\bar z}F+\chi_rD_\psi  F=
\partial_{\bar z}(F+C_\Delta F)= 0.$$

The remainder of the proof consists of two steps. We show that, if the radius $r$ of $\Delta$ is sufficiently small,
more precisely, if

\begin{equation}\label{size-of-r}  \sup_{\la\in S^1, |z-z_0|\le \frac4{3}}\|D_\psi(\la,z)\|\frac1{\pi}\int_{|t|<\frac4{3}r}\frac{dm(t)}{|t|}<\frac1{3},\end{equation}
then:\\
\emph{1) for every constant $G_0\in \gl(n,\C)$,  \eqref{c-transform-eq} has a smooth solution $F$ on $S^1 \times M$, such that, for each $z \in \Delta$, $F(z,\cdot)$  belongs to $\Lambda^+(\gl(n,\C))$;\\
2) for $G_0=I_{\C^n}$,  the solution of  \eqref{c-transform-eq} is $\GL(n,\C)$-valued. }\\

1) Let $\Delta'=\{|z-z_0|\le 2r\}\subset U$, and consider  for $k\in \mathbb{N}\cup\{0\}$  the Banach algebras $\mathcal{B}_k$ consisting of functions $F\in C^k(S^1\times\Delta')$ with the property that $F(\cdot,z)\in \Lambda^+(\gl(n,\C))$ for all $z\in \Delta'$,  endowed with the natural norm inherited from $C^k(S^1\times \Delta')$.\\
The  assumption (iii) in the statement together with the fact that  $\Delta'\subset U$  imply that $D_\psi\in \mathcal{B}_k$ for all $k$.  From the second equality in \eqref{c-delta} it  follows easily that $C_\Delta$ defines a bounded operator on each of these algebras (Banach spaces).\\
Our first claim is that for all $k\ge 0$, $C_\Delta|\mathcal{B}_k$ is compact.
   To this end, notice that it will be sufficient to prove the assertion when $k=0$, since the general case follows   by a repeated application of this result using  the second equality in \eqref{c-delta}.   Now observe that, for any $h\in L^1(\{|z-z_0|<4r\})$, the operator $C_h$, defined on $\mathcal{B}_0$ by
$$C_hF=\frac1{\pi}\int_\C(\chi_rD_\psi F(t))h(t-z)dm(t),$$
maps $\mathcal{B}_0$ into itself with
\begin{equation*}
\|C_h F\|_{\mathcal{B}_0}\le\frac1{\pi} \|h\|_1 \|D_\psi\|_{\mathcal{B}_0}\|F\|_{\mathcal{B}_0}.\end{equation*}

Let $(h_j)$  be  a sequence of polynomials in $z,\bar z$ converging in $ L^1(\{|z-z_0|<4r\})$ to the function $z\mapsto z^{-1}$. Obviously, $h_j(t-z)$ can be written as a finite sum
$$h_j(t-z)=\sum_k p_k^j(t)q_k^j(z),$$
with $p_k^j(s),q_k^n(s)$   polynomials in $s,\overline{s}$,  which  shows that the operators $C_{h_j}$ are  of finite rank and converge in operator norm to $C_\Delta$ on $\mathcal{B}_0$, hence $C_\Delta$ is compact.\\
Thus, $C_\Delta|\mathcal{B}_k$ is compact, and by  \eqref{size-of-r},  $I_{\mathcal{B}_0}+C_\Delta|\mathcal{B}_0$ is invertible. This implies that  $I_{\mathcal{B}_k}+C_\Delta|\mathcal{B}_k$ is invertible on $\mathcal{B}_k$ for every $k\ge 0$. Indeed,  from the inclusion $\mathcal{B}_k\subset\mathcal{B}_0,~k>0$, it follows that $I_{\mathcal{B}_k}+C_\Delta|\mathcal{B}_k$ is injective for all $k\in \mathbb{N}\cup\{0\}$.
Then, by the spectral theory of compact operators \cite[p.~238]{Lax}, we have that   $I_{\mathcal{B}_k}+C_\Delta|{\mathcal{B}_k}$ is invertible for all $k\ge 0$.  Thus, for each $k\in \mathbb{N}\cup\{0\}$,  \eqref{c-transform-eq} has a unique solution $F_k\in  \mathcal{B}_k$. Using again the inclusion
$\mathcal{B}_k\subset\mathcal{B}_0,~k>0$, we conclude that $F_k=F_0$, for all $k\ge 0$, i.e. $F=F_0$ is a  smooth solution of
\eqref{c-transform-eq}, such that $F(z,\cdot)$  belongs to $\Lambda^+(\gl(n,\C))$.

2) Assume that   $G_0=I_{\C^n}$,  and let $F$ be the smooth solution   of \eqref{c-transform-eq} given above. We want to show that   then $F(\lambda,z)\in
\GL(n,\C)$. Recall that if $k=0$, the operator norm of $C_\Delta|\mathcal{B}_0$ is less than $\frac1{3}$, by \eqref{size-of-r}.
 Our solution $F$  is  given by $F=\sum_{j=0}^\infty (-1)^jC^j_\Delta I_{\C^n}$, hence $$\|F\|_{\mathcal{B}_0}\le  \sum_{j=0}^\infty\|C_\Delta I_{\C^n}\|^j_{\mathcal{B}_0}\le  \sum_{j=0}^\infty 3^{-j}=\frac3{2}.$$
Since the norm on $\mathcal{B}_0$ is the supremum norm, it follows from the above  that the $\gl(n,\C)$-norm of $C_\Delta F(\la,z)$ is less than $\frac1{2}$. From \eqref{c-transform-eq} we conclude that $F(\lambda,z)\in
\GL(n,\C)$, and the proof is complete.
	\end{proof}

\begin{remark} \label{rem:Grauert} \rm (a) The approach used for the proof of the second part of the statement   is  similar to the one in \cite[Appendix, p.~665]{DPW}. In fact, using those ideas one can apply a theorem of Grauert \cite{Gr} to show that, if (iii) holds on a \emph{non-compact} Riemann surface $M$, then $V$ is a trivial holomorphic subbundle of $\HH$.

(b) If $V=\Phi\H_+$, where $\Phi$ is an extended  solution associated to  a $\U(n)$-valued harmonic map $\v$, the argument in the second part of Theorem \ref{d-bar-invariance} actually shows that $\Phi$ is real analytic on $M$. Indeed, any harmonic map $\v$ is real analytic on $M$ \cite{uhlenbeck}, hence, so is $\Phi^{-1}\partial_z\Phi$. Then one can replace the Banach algebras $\mathcal{B}_k$ in the proof of the second statement by  suitably chosen Banach algebras consisting of   functions which are holomorphic  in the second variable in the interior of $\Delta'\times \Delta'$ with $C^k$-extensions to the boundary. The argument produces a locally defined  $\GL(n,\C)$-valued function $w$, real analytic in $z$ and smooth in $\la$, such that $\Phi w$ is holomorphic in $z$,  hence $\Phi$ is real analytic in $z\in M$. As pointed out above, it follows from \cite[Thm 2.2]{uhlenbeck} that if $z_0\in M$ is fixed, then for every $z\in M$,  $\Phi^{-1}(\cdot,z_0)\Phi(\cdot,z)$ extends  to a holomorphic function in  $\C\setminus\{0\}$.

(c)  The converse of the first part is clearly true, that is, if
there exists a smooth $\U(n)$-valued function  $\psi$ on $S^1 \times M$ such that $V(z)=\psi(\cdot,z)\H_+$ then conditions (i) and (ii) hold.

(d) An extended solution $\psi$ satisfies the finite uniton number condition \eqref{finite-uniton-no} for some $r,s\in \N$ if and only if $W = \psi\H_+$ satisfies a similar equation:
$$
\la^s\H_+ \subseteq W \subseteq \la^{-r}\H_+.
$$
If $W$ satisfies this equation for some $r,s \in \N$, then it automatically satisfies conditions (i) and (ii) of the theorem.
\end{remark}

\subsection{The Gauss sequence} \label{subsec:the Gauss sequence}
Let $\varphi:M\to \U(n)$ be a harmonic map with associated smooth extended solution $\Phi$, and set $W(z)=\Phi(\cdot,z)\H_+$. We are interested in the structure of the nested sequence
 $$W_{(i)}:=W+\partial_z W+\cdots+\partial^i_zW.$$
The sequence is defined inductively, so we start our discussion with the first step $W_{(1)}$. A direct calculation shows that in terms of smooth sections we have $$W_{(1)}=\Phi(\la^{-1} \image{A_z^\varphi} +\H_+),$$
which immediately shows that $W_{(1)}(z)$ is a closed,  shift-invariant subspace of $L^2(S^1,\C^n)$ which also satisfies \eqref{propW}.
 However,  due to the fact that the  rank of $A_z^\v$ may vary,  $W_{(1)}(z)$ may not have the structure of a bundle, and the representation $W_{(1)}=\Phi_1\H_+$ with $\Phi_1$ smooth on $S^1\times M$ might fail. On the other hand,  as we shall see in
 \S \ref{subsec:further}, $A_z^\v$ is holomorphic with respect to a certain complex structure on a trivial bundle over $M$. In particular, the set of points  where its rank is not maximal  is discrete in $M$ and, furthermore, due to the technique of \emph{filling out zeros} developed in \cite{burstall-wood},  it follows that $\image{A_z^\varphi}$ has a unique extension  to a smooth subbundle $\alpha_1$ of the trivial bundle $M\times\C^n$. For this reason, we define
\begin{equation*} W_{(1)}=\Phi(\la^{-1} \alpha_1 +\H_+).
\end{equation*}
Note that, by continuity, $W_{(1)}$ satisfies \eqref{propW} at all points of $M$.

Now let $\varphi:M\to \U(n)$ be a harmonic map with associated smooth extended solution $\Phi$, and set $W(z)=\Phi(\cdot,z)\H_+$. Then $W_{(1)}$ is an extended solution which  obviously satisfies conditions (i) and (ii) of Theorem \ref{d-bar-invariance}. Indeed, from  $\la W_{(1)}\subset W$, we conclude that  $\bigcap_{k \in \N} S^kW_{(1)} = \{0\}$,
and
 by definition $$\dim W_{(1)}\ominus \la W_{(1)}=\dim\{\la^{-1}\alpha_1 +\alpha_1^\perp\}=n.$$
Thus, $W_{(1)}$ has   the form $\Phi_1\H_+$  and is a holomorphic subbundle of $\HH= M \times L^2(S^1,\C^n)$.  Therefore we
 can proceed to define inductively a nested  sequence  $(W_{(i)})$  of extended solutions by setting $W_{(0)}=W$, and $$W_{(i+1)}=(W_{(i)})_{(1)}.$$
 $W_{(i)}$ will be called the \emph{$i$th osculating subbundle of $W$}, and  $(W_{(i)})$  will be called the  \emph{Gauss sequence} of  $W$. Clearly, off a discrete set $D_i\subset M$ we have
 \begin{equation}
 \label{Gauss}
 W_{(i)}= \partial_zW_{(i-1)}+W_{(i-1)}=\partial^i_zW+\partial_z^{i-1}W+\cdots\partial_zW+W,
 \end{equation}
and each $W_{(i)}$  is an extended solution as well as a holomorphic subbundle of $\HH$.

 It is interesting to note that $W_{(i)} = \Phi_i\H_+$, where $\Phi_0=\Phi$
and
$\Phi_i$, $i \geq 1$, is the extended solution obtained from $\Phi_{i-1}$ by \emph{adding a uniton}:
\begin{equation}  \label{add-uniton}
 \Phi_i = \Phi_{i-1} (\pi_{\alpha_i} + \la^{-1}\pi_{\alpha_i}^{\perp}), \text{ equivalently, }
 \Phi_{i-1} = \Phi_i (\pi_{\alpha_i} + \la\pi_{\alpha_i}^{\perp});
 \end{equation}
in fact, $\alpha_i^{\perp} = \image A^{\varphi_{i-1}}_z$,
 where $\varphi_{i-1}$ is a harmonic map with associated extended solution $\Phi_{i-1}$, see, for example, \cite[Example 4.7]{svensson-wood-unitons}.

\subsection{The criterion} \label{subsec:the criterion}
In the sequel, we shall consider harmonic maps $\varphi:M\to \U(n)$   whose extended solutions  $W=\Phi\H_+$ are represented in the general form
\begin{equation}\label{W-psi}
W=\psi\H_+
\end{equation}
where $\psi:S^1 \times M \to \GL(n,\C)$ is a smooth map.
To do calculations cleanly, we will choose a local complex coordinate $z$; but as previously mentioned, our results will not depend on that choice.

Recall from \S \ref{sec:intro} that
$\psi^{-1}(\la,z)\partial_z\psi(\la,z)$ extends to a meromorphic function of $\la$ in the punctured unit disc, with a simple pole at the origin, i.e.,
\begin{equation} \label{deriv-psi}
A_{\psi}(\la,z): = \psi^{-1}(\la,z)\partial_z\psi(\la,z)
=\sum_{j=-1}^\infty a_j(z)\lambda^j,\quad \la\in S^1,\end{equation}
where the $a_j$ are smooth
matrix-valued functions on $M$, and the series together with its derivatives converges uniformly on compact subsets of   $S^1\times M$.

The construction of the Gauss sequence of $W$ given in the previous subsection  is related to the action of a family of differential operators induced by $\psi$ on $C^\infty(M,\C^n)$, which will play a crucial role in what follows (see also Remark \ref{dmodules} for an interpretation in terms of $D$-modules).

More precisely, for   $\la\in S^1$, let   $T = T_\psi(\la):C^\infty(M,\C^n)\to C^\infty(M,\C^n)$ be defined by
\begin{equation}\label{def-T}
T_\psi(\la)u=\partial_zu+ \psi^{-1}\partial_z\psi(\la,\cdot)u.\end{equation}
Given any nonnegative integer power $r$  of this operator and $u\in  C^\infty(M,\C^n)$, we shall regard throughout  $T^r_\psi(\la)u(z)$ as a smooth function on $S^1\times M$. It follows from  \eqref{deriv-psi} that
$T^r_\psi(\cdot)u(z)\in \la^{-r}\H_+,~z\in M$.

\begin{lemma}\label{T_psi}  Let $W=\psi\H_+$ be an extended solution with $\psi^{-1}\partial_z\psi$ given by
\eqref{deriv-psi}.	

Then off a discrete set $D_i\subset M$ we have
 \begin{equation}\label{W_i-form}
 W_{(i)}=\psi(\lambda,z)\big\{\sum_{r=1}^iT^r_\psi(\la)h_r +v(\cdot,z):~h_r\in C^\infty(M,\C^n),\, v(\cdot,z)\in \H_+  \big\}.	\end{equation}
\end{lemma}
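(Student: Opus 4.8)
The plan is to induct on $i$, taking as base case $i=0$ the identity $W_{(0)}=W=\psi\H_+$; with the convention that the empty sum ($r$ from $1$ to $0$) is zero, the right-hand side of \eqref{W_i-form} is then exactly $\psi\H_+$, as required. For the inductive step I would work away from a discrete set (enlarging the exceptional set $D_i$ if necessary) so that the naive relation \eqref{Gauss} holds, giving $W_{(i+1)}=W_{(i)}+\partial_zW_{(i)}$.

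The computational core is a single application of the product rule. For any section $g=g(\la,z)$ one has $\partial_z(\psi g)=(\partial_z\psi)g+\psi\,\partial_z g=\psi\big(\partial_z g+\psi^{-1}\partial_z\psi\cdot g\big)=\psi\,T_\psi(\la)g$, where $T_\psi(\la)$ is understood to act on $\la$-dependent functions by treating $\la$ as a parameter in \eqref{def-T}. Write $\mathcal{V}_i$ for the bracketed set on the right of \eqref{W_i-form}. The inductive hypothesis $W_{(i)}=\psi\mathcal{V}_i$ together with the displayed identity yields $\partial_zW_{(i)}=\psi\,T_\psi(\la)\mathcal{V}_i$, so that $W_{(i+1)}=\psi\big(\mathcal{V}_i+T_\psi(\la)\mathcal{V}_i\big)$. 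Everything thus reduces to the purely algebraic identity $\mathcal{V}_i+T_\psi(\la)\mathcal{V}_i=\mathcal{V}_{i+1}$.

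One inclusion is immediate: given $\sum_{r=1}^{i+1}T^r_\psi(\la)h_r+v\in\mathcal{V}_{i+1}$, the top term is $T^{i+1}_\psi(\la)h_{i+1}=T_\psi(\la)\big(T^{i}_\psi(\la)h_{i+1}\big)\in T_\psi(\la)\mathcal{V}_i$, while the remaining terms already lie in $\mathcal{V}_i$; thus $\mathcal{V}_{i+1}\subseteq\mathcal{V}_i+T_\psi(\la)\mathcal{V}_i$. For the reverse inclusion, since $\mathcal{V}_i\subseteq\mathcal{V}_{i+1}$ trivially it suffices to show $T_\psi(\la)\mathcal{V}_i\subseteq\mathcal{V}_{i+1}$. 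Applying $T_\psi(\la)$ to a typical element $\sum_{r=1}^{i}T^r_\psi(\la)h_r+v$ of $\mathcal{V}_i$ sends each $T^r_\psi(\la)h_r$ to $T^{r+1}_\psi(\la)h_r$, with indices $2,\dots,i+1$ all permitted in $\mathcal{V}_{i+1}$, so the only term needing care is $T_\psi(\la)v$ with $v(\cdot,z)\in\H_+$. Here the key observation is that, modulo $\H_+$, this term sees only the zeroth Fourier coefficient $v_0(z)\in C^\infty(M,\C^n)$ of $v$: indeed $\partial_z v\in\H_+$, and by \eqref{deriv-psi} the simple pole of $\psi^{-1}\partial_z\psi$ meets only the constant term of $v$, so $\psi^{-1}\partial_z\psi\cdot v\equiv a_{-1}v_0\la^{-1}\pmod{\H_+}$. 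Since also $T_\psi(\la)v_0\equiv a_{-1}v_0\la^{-1}\pmod{\H_+}$, we obtain $T_\psi(\la)v=T_\psi(\la)v_0+(\text{element of }\H_+)\in\mathcal{V}_{i+1}$, completing the reverse inclusion and hence the induction.

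I expect the main obstacle to be exactly this absorption step for $T_\psi(\la)v$: one must verify that the simple pole of $\psi^{-1}\partial_z\psi$ produces no negative Fourier modes beyond those already carried by the operators $T^r_\psi(\la)$, and that the residual coefficient it generates is itself $T_\psi(\la)$ applied to the genuinely $z$-dependent vector $v_0$. The remaining points are routine and would be dispatched briefly: the validity of the product-rule identity for $\la$-dependent $g$, the fact that each fibre $W_{(i)}(z)$ is a finite-dimensional modification of $\psi(\cdot,z)\H_+$ and hence closed (so no completion issues arise), and the bookkeeping that all steps are legitimate off the discrete exceptional set.
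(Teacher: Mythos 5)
Your proof is correct and follows exactly the route the paper takes: induction on $i$ via the relation \eqref{Gauss}, with the product rule $\partial_z(\psi g)=\psi T_\psi(\la)g$ reducing everything to the algebraic identity $\mathcal{V}_i+T_\psi(\la)\mathcal{V}_i=\mathcal{V}_{i+1}$. The paper dismisses this as following ``easily by induction,'' and your absorption step --- showing $T_\psi(\la)v\equiv T_\psi(\la)v_0 \pmod{\H_+}$ because the simple pole of $\psi^{-1}\partial_z\psi$ meets only the zeroth Fourier coefficient of $v$ --- is precisely the detail being left implicit there.
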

\begin{proof} This follows easily by induction.
	For $i=0$,  \eqref{W_i-form} is obvious and for $i\ge 1$, by \eqref{Gauss} there exists $D_i\subset M$ discrete such that
	$$W_{(i)}= \partial_z W_{(i-1)}+ W_{(i-1)}$$
off $D_i$.	
\end{proof}

Our criterion for finiteness of the uniton number can be formulated as follows.

\begin{theorem}\label{finite uniton} Let $W=\psi\H_+$ be an extended solution with $\psi^{-1}\partial_z\psi$ given by
\eqref{deriv-psi}.	
	Then the following are equivalent$:$
\begin{enumerate}	
	\item[(i)] $W$ is of finite uniton number$;$
	\item[(ii)] there exists a smooth map $\hat{\Phi}:S^1\to \U(n)$ (independent of $z\in M$) such that $W\subset \hat{\Phi}\H_+;$
	\item[(iii)] the Gauss sequence of $W$  stabilizes, i.e., there exists $i\ge 0$ such that $W_{(i+1)}=W_{(i)};$
	\item[(iv)] if $T=T_\psi(\la),~0<|\la|<1$, denotes the operator defined in \eqref{def-T} then the maximum power of\/ $\la^{-1}$ in  $T^r$ stays bounded when $r\in \mathbb{N}$, i.e., there exists $k_0\in\mathbb{N}$ such that $$\int_0^{2\pi}e^{ikt}T_\psi^r(e^{it})udt=0, \quad k\ge k_0,\,u\in C^\infty(M,\C^n),\, r\in \mathbb{N}.$$
\end{enumerate}	
\end{theorem}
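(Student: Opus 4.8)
The plan is to establish the cyclic chain $(i)\Rightarrow(ii)\Rightarrow(iii)\Rightarrow(i)$ and, separately, the equivalence $(iii)\Leftrightarrow(iv)$, which is the bridge between the geometric stabilization condition and the operator-theoretic one. The whole argument for $(iii)\Leftrightarrow(iv)$ rests on Lemma \ref{T_psi} together with one observation read off from \eqref{add-uniton}: since each added factor satisfies $\pi_{\alpha_j}+\la^{-1}\pi_{\alpha_j}^\perp=\la^{-1}(\la\pi_{\alpha_j}+\pi_{\alpha_j}^\perp)$ with the bracket polynomial in $\la$, one gets $W_{(i)}\subset\la^{-i}W$ for every $i$. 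Granting this, I would argue $(iv)\Rightarrow(iii)$ by noting that if all $T^r_\psi(\la)u$ lie in a fixed $\la^{-(k_0-1)}\H_+$, then \eqref{W_i-form} traps the increasing family between $W$ and $\la^{-(k_0-1)}W$, a quotient of finite dimension $n(k_0-1)$, forcing stabilization; and $(iii)\Rightarrow(iv)$ by observing that once the sequence stabilizes at $i_0$, every generator satisfies $\psi\,T^r_\psi(\la)h\in W_{(r)}\subset\la^{-i_0}W$, whence $T^r_\psi(\la)h\in\la^{-i_0}\H_+$ uniformly in $r$. The implication $(i)\Rightarrow(ii)$ is immediate from Remark \ref{rem:Grauert}(d): finiteness yields $W\subset\la^{-r}\H_+$, so $(ii)$ holds with $\hat\Phi(\la)=\la^{-r}I$.

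For $(ii)\Rightarrow(iii)$ I would first reduce to the case $W\subset\H_+$: as $\hat\Phi$ is independent of $z$, left multiplication by $\hat\Phi^{-1}$ sends $W_{(i)}$ to $\hat\Phi^{-1}W_{(i)}$ and so does not change whether the Gauss sequence stabilizes, allowing me to replace $W$ by $\hat\Phi^{-1}W\subset\H_+$. Because differentiation in $z$ preserves the absence of negative powers of $\la$, every $W_{(i)}\subset\H_+$. Each $W_{(i)}$ is an extended solution, hence by \S\ref{subsec:the Gauss sequence} and Proposition \ref{Grass-model} equals $\Phi_i\H_+$ for a smooth $\U(n)$-valued loop $\Phi_i$, which is inner by the inclusion in $\H_+$; a continuous inner function is a finite Blaschke--Potapov product, so $d_i:=\dim(\H_+/W_{(i)})$ is finite. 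Since $(W_{(i)})$ increases inside $\H_+$, the integers $d_i$ decrease and therefore stabilize, and equality of finite codimensions forces $W_{(i_0)}=W_{(i_0+1)}$.

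For $(iii)\Rightarrow(i)$, stabilization at $i_0$ gives $\partial_zW_{(i_0)}\subset W_{(i_0)}$ through \eqref{Gauss} (off a discrete set, hence everywhere by continuity), while being an extended solution gives $\partial_{\bar z}W_{(i_0)}\subset W_{(i_0)}$. A subbundle of the flat bundle $\HH$ invariant under both $\partial_z$ and $\partial_{\bar z}$ is parallel --- its orthogonal projection $\pi$ satisfies $(1-\pi)\partial_z\pi=(1-\pi)\partial_{\bar z}\pi=0$, and combining with $\pi^2=\pi$ and adjoints forces $\partial_z\pi=\partial_{\bar z}\pi=0$ --- so its fibre $V_0$ is constant, and Proposition \ref{Grass-model} gives $V_0=\gamma\H_+$ with $\gamma\in\Omega\U(n)$. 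Writing $W_{(i_0)}=\Phi_{i_0}\H_+$ and using the normalization $\Phi_{i_0}(1,\cdot)=I$ carried by \eqref{add-uniton}, uniqueness of the representation forces $\Phi_{i_0}=\gamma$. Unwinding \eqref{add-uniton} then expresses $\Phi=\gamma\prod_{j=i_0}^{1}(\pi_{\alpha_j}+\la\pi_{\alpha_j}^\perp)$ as a constant loop times a polynomial in $\la$, which is the finite-uniton form \eqref{finite-uniton-no}.

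The hard part will be $(iii)\Rightarrow(i)$: the crux is to recognize that the double invariance of the stabilized osculating bundle is precisely the parallelism that collapses it onto a constant subspace $\gamma\H_+$, and then to translate this back --- via the uniton factorization and the normalization at $\la=1$ --- into the explicit polynomial representation. A secondary delicate point is the finiteness and monotonicity of $d_i=\dim(\H_+/W_{(i)})$ in $(ii)\Rightarrow(iii)$, which relies on the classical fact that a continuous (here smooth) inner function must be a finite Blaschke--Potapov product; without this one could not guarantee that the decreasing codimensions are finite integers.
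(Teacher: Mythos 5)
Your proposal is correct and follows the same skeleton as the paper's proof --- the cycle (i)$\Rightarrow$(ii)$\Rightarrow$(iii)$\Rightarrow$(i) with (iii)$\Leftrightarrow$(iv) bridged through Lemma \ref{T_psi} and the containments $W\subseteq W_{(i)}\subseteq \la^{-i}W$ --- but two of your steps are carried out by genuinely different mechanisms. For (ii)$\Rightarrow$(iii), you reduce to $W\subseteq\H_+$, note that each $\hat\Phi^{-1}\Phi_i$ is a smooth inner function, invoke the classical fact that a continuous inner matrix function is a finite Blaschke--Potapov product to get finite, non-increasing codimensions $d_i=\dim(\H_+/W_{(i)})$, and stabilize; the paper instead compares orders of vanishing at the origin of $\det\prod_{l=1}^i(\pi_l+\la\pi_l^\perp)$ against that of $\det\hat\Phi^{-1}\Phi_0$, which avoids needing finiteness of the full codimension (only the finite order of a zero of a bounded analytic determinant) and so is marginally more economical, while your version buys a cleaner monotonicity picture --- in both arguments one should note, as you implicitly do, that the increments $W_{(i+1)}/W_{(i)}$ have constant rank (the filled-out bundles $\alpha_{i+1}$), so pointwise stabilization is automatically uniform in $z$. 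For (iii)$\Rightarrow$(i), you prove constancy of the stabilized bundle by showing all four corner blocks of $\partial_z\pi$ and $\partial_{\bar z}\pi$ vanish for the (smooth, since $\pi=\Phi_{i_0}P_+\Phi_{i_0}^{*}$) projection, then identify the constant fibre via Proposition \ref{Grass-model}; the paper instead observes that $W_{(i+1)}=W_{(i)}$ is equivalent to $A_z^{\varphi_{i}}=0$, which forces $\Phi_{i}^{-1}d\Phi_{i}=0$ directly --- your parallelism computation is a valid and self-contained substitute, at the cost of rechecking the normalization $\Phi_{i_0}(1,\cdot)=I$ (fine, since \eqref{add-uniton} preserves it when $\Phi_0$ is normalized). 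One small slip: in (i)$\Rightarrow$(ii) you take $\hat\Phi=\la^{-r}I$, but the definition of finite uniton number only gives $W=v\Phi_0\H_+$ with $v$ a constant loop and $\Phi_0$ polynomial, hence $W\subseteq v\la^{-r}\H_+$; you should take $\hat\Phi=\la^{-r}v$, which is still $z$-independent, so the fix is immediate and nothing downstream is affected.
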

\begin{proof} (i) $\Rightarrow$ (ii) is obvious by definition, since if (i) holds, all extended solutions have the form $v\Phi_0$, with $v$ constant in $z\in M$, and $\Phi_0$ a trigonometric polynomial in $\lambda$.

(ii) $\Leftrightarrow$ (iii). If (ii) holds, on writing the bundles $(W_{(i)})_{i\ge 0}$ as $$W_{(i)}=\Phi_{i}(\cdot, z)\H_+$$ with the $\Phi_i$ extended solutions, then $\hat{\Phi}^{-1}\Phi_{i}(\cdot,z),~i\ge 0$, are the boundary values of bounded analytic matrix-valued functions in the unit disc, and by \eqref{add-uniton} there exist projection-valued smooth functions $\pi_i,~i\ge 1$  on $M$ such that
	$$\hat{\Phi}^{-1}(\lambda)\Phi_{0}(\lambda,z)=
\hat{\Phi}^{-1}(\lambda)\Phi_{i}(\lambda,z)\prod_{l=1}^i \bigl(\pi_l(z)+\lambda\pi_l^\perp(z) \bigr)$$
	for all $i\ge 1$ and all $\lambda$ in the unit disc. This implies that the order of the zero at the origin  of
	$\det \prod_{l=1}^i \bigl(\pi_l(z)+\lambda\pi_l^\perp(z)\bigr)$ cannot exceed the order of the zero at the origin of  $\det\hat{\Phi}^{-1}(\lambda)\Phi_0(\lambda,z)$, for all $i\ge 1$, hence there is an $i_0 \in \N$ such that $\pi_i(z)=I$ for $i\ge i_0$, and thus $W_{(i)}=W_{(i_0)},~i\ge i_0$. Conversely, if  $\varphi_i=\Phi_{i}(-1,\cdot)$, it follows from the formula for the uniton \eqref{add-uniton} that
	$W_{(i+1)}=W_{(i)}$ is equivalent to $A_z^{\varphi_i}=0$, and this implies that $\Phi_{i}$ is constant in $z\in M$, i.e., (ii) holds with $\hat{\Phi}=\Phi_{i}$.
	
	(iii) $\Leftrightarrow$ (iv)   follows  immediately from Lemma \ref{T_psi}. Finally if (iii) holds, then as above, we deduce that
	the extended solution $\Phi_i$ corresponding to $W_{(i)}$ is constant and  (i) follows.
\end{proof}

\begin{remark} \label{rem:extd local} {\rm
By real analyticity, our criteria are local, i.e., if any of the above criteria holds on an open set, it holds on the whole of} $M$.
\end{remark}

\subsection{Direct applications} \label{subsec:direct applications}

Our first observation extends a result in \cite[Example 4.2]{svensson-wood-twistor-lifts} to the case of arbitrary potentials.
\begin{corollary}\label{a-1 nilpotent} Let\/ $W=\psi\H_+$ be an extended solution with $\psi^{-1}\partial_z\psi$ given by
	\eqref{deriv-psi}.
	 If\/ $W$ is of finite uniton number then $a_{-1}$ is nilpotent.	
	\end{corollary}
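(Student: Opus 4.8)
The plan is to use criterion (iv) of Theorem \ref{finite uniton}, which asserts that if $W$ is of finite uniton number then the maximum power of $\la^{-1}$ appearing in $T_\psi^r(\la)$ stays bounded as $r \to \infty$. The key point is that the operator $T = T_\psi(\la) = \partial_z + A_\psi(\la,\cdot)$, where $A_\psi(\la,z) = \psi^{-1}\partial_z\psi = \sum_{j=-1}^\infty a_j(z)\la^j$, has its most singular term in $\la$ given by the multiplication operator $\la^{-1}a_{-1}$. First I would examine how the $\la^{-1}$-power grows under iteration of $T$: applying $T$ to a function already carrying a $\la^{-1}$-coefficient produces, among other terms, the product of $\la^{-1}a_{-1}$ with the existing leading coefficient, so the bottom power of $\la$ in $T^r u$ is governed by powers of the matrix $a_{-1}$.

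More precisely, I would track the coefficient of the lowest power $\la^{-r}$ in $T^r u$. Writing $T = \partial_z + \la^{-1}a_{-1} + (\text{terms with } \la^{j},\, j \ge 0)$, an induction shows that the coefficient of $\la^{-r}$ in $T^r u$ is precisely $a_{-1}^r u$: each additional application of $T$ can lower the power of $\la$ by at most one, and the only way to reach $\la^{-r}$ after $r$ steps is to pick up the factor $\la^{-1}a_{-1}$ at every stage, since $\partial_z$ and the nonnegative-power terms do not decrease the $\la$-power. Thus the coefficient of $\la^{-r}$ is $a_{-1}^r u$ (the $\partial_z$-terms never contribute to the extremal power). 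This identifies the obstruction to boundedness of negative powers directly with the powers of $a_{-1}$.

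Now if $W$ has finite uniton number, criterion (iv) gives a bound $k_0$ so that no power of $\la^{-1}$ exceeding $k_0 - 1$ occurs in any $T^r u$. Taking $r \ge k_0$ forces the coefficient of $\la^{-r}$, namely $a_{-1}^r u$, to vanish for every $u \in C^\infty(M,\C^n)$. Since $u$ is arbitrary and $a_{-1}$ is a matrix-valued multiplication operator, this means $a_{-1}(z)^r = 0$ pointwise for all $z$, i.e., $a_{-1}$ is nilpotent at every point (indeed $a_{-1}^{k_0} = 0$).

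The main obstacle will be establishing the inductive identification of the coefficient of $\la^{-r}$ cleanly, since $T$ mixes the differential part $\partial_z$ with the singular multiplication part $\la^{-1}a_{-1}$, and one must verify that $\partial_z$ acting on coefficient functions cannot feed into the extremal negative power. The resolution is the observation that $\partial_z$ preserves the power of $\la$ while only $\la^{-1}a_{-1}$ lowers it, so a simple bookkeeping of the minimal $\la$-exponent suffices; once this is in place the conclusion is immediate from (iv). I would also note that this argument is local and independent of the choice of coordinate $z$, consistent with Remark \ref{rem:extd local}.
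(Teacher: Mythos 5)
Your proof is correct and is essentially the paper's own argument: the paper's one-line proof states the identity $\int_0^{2\pi}e^{irt}T^r(e^{it})u\,dt=a_{-1}^ru$, which is exactly your inductive identification of the $\la^{-r}$-coefficient of $T^r u$ as $a_{-1}^r u$, and then applies Theorem \ref{finite uniton}(iv) just as you do. You merely spell out the bookkeeping (that $\partial_z$ and the nonnegative-power terms cannot reach the extremal power) that the paper leaves implicit.
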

	
	\begin{proof}
	This is just an application of Theorem \ref{finite uniton}(iv) since 	
$$\int_0^{2\pi}e^{irt}T^r(e^{it})udt=a_{-1}^ru, \quad u\in C^\infty(M,\C^n),\, r\in \mathbb{N}.$$				
		\end{proof}

Note that, if $\psi$ is an extended solution $a_{-1} = A^{\v}_z$.		
The condition is far from sufficient, as follows from the results and examples below.\\
In  general, the powers of the differential operators $T_\psi(\la)$ are difficult to control, but there are at least two extreme situations when the problem becomes more tractable. Throughout in what follows we shall use the fact that the $W_{(i)}$ are holomorphic subbundles of $\HH= M \times L^2(S^1,\C^n)$.

\begin{proposition} \label{extreme cases}  Let $W=\psi\H_+$ be an extended solution with $\psi^{-1}\partial_z\psi$ given by
	\eqref{deriv-psi}.
\begin{enumerate}	
\item[(i)]  Assume that  $a_{-1}^2=0$, and  $$\image{\bigl(a_0(z)a_{-1}(z)+\partial_za_{-1}(z)\bigr)}+\image{a_{-1}(z)}=\C^n$$
 for all $z$ on an open non-void  subset $U \subset M$.
Then for $i\ge 0$,  $W_{(2i)}= \la^{-i}\psi\H_+  = \la^{-i}W$.  In particular,  $W$ is not of finite uniton number.
\item[(ii)] Assume that
$$\image{\bigl(a_j(z)a_{-1}(z)+\partial_za_{-1}(z)\bigr)}\subset\image{a_{-1}(z)},\quad j\ge -1,$$
 for all $z$ on an open non-void subset $U \subset M$.
 Then for all $i\ge 0$,
$$W_{(i)}(z)\subset \psi(\cdot,z)\bigl(\sum_{r=1}^i\image{a_{-1}^r(z)} +\H_+ \bigr).$$
In this case, $W$ is of finite uniton number if and only if $a_{-1}$ is nilpotent.
\end{enumerate}
\end{proposition}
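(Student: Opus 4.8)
The plan is to compute the osculating subbundles $W_{(i)}$ explicitly by controlling the powers of the operator $T = T_\psi(\la)$, exploiting the algebraic hypotheses on $a_{-1}$. Recall from Lemma \ref{T_psi} that, off a discrete set, $W_{(i)} = \psi\{\sum_{r=1}^i T^r h_r + v : h_r \in C^\infty(M,\C^n),\ v \in \H_+\}$, so everything reduces to understanding which negative powers of $\la$ survive in the functions $T^r u$. Writing $T^r u = \sum_{k} \la^{-k}(\cdots)$ and extracting the coefficient of the most negative power of $\la$, a direct computation using \eqref{def-T} and \eqref{deriv-psi} shows that the coefficient of $\la^{-r}$ in $T^r u$ is $a_{-1}^r u$, while the coefficient of $\la^{-(r-1)}$ involves terms of the form $a_{-1}^{r-2}(a_j a_{-1} + \partial_z a_{-1})(\cdots)$. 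This is exactly the combination appearing in both hypotheses, which is why these two ``extreme cases'' are the tractable ones.

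For part (i), I would use $a_{-1}^2 = 0$ to kill all powers $\la^{-k}$ with $k \geq 2$ in $T^r u$: since any product of two or more copies of $a_{-1}$ vanishes, the only surviving negative powers are $\la^{-1}$ (coefficient $a_{-1}u$, from the single pole) and, crucially, the coefficient of $\la^{-1}$ coming from the cross term $a_0 a_{-1} + \partial_z a_{-1}$ when we apply $T$ twice. The surjectivity hypothesis $\image(a_0 a_{-1} + \partial_z a_{-1}) + \image a_{-1} = \C^n$ then guarantees that $W_{(2)}$ contains $\la^{-1}\psi\C^n$ worth of functions spanning a full rank-$n$ increment, forcing $W_{(2)} = \la^{-1}W$. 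Iterating (the structure is self-similar after multiplying by $\la^{-1}$) gives $W_{(2i)} = \la^{-i}W$, and since this grows without bound the Gauss sequence never stabilizes; by Theorem \ref{finite uniton}(iii) $W$ is not of finite uniton number.

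For part (ii), the inclusion hypothesis $\image(a_j a_{-1} + \partial_z a_{-1}) \subset \image a_{-1}$ for all $j \geq -1$ is an invariance condition that lets me argue by induction that every negative-power coefficient of $T^r u$ lies in the appropriate $\image a_{-1}^s$. Concretely, I would show by induction on $r$ that the coefficient of $\la^{-s}$ in $T^r u$ lies in $\image a_{-1}^s$, using the hypothesis to propagate the containment when passing from $T^{r-1}$ to $T^r = (\partial_z + A_\psi)T^{r-1}$: the new poles created by the $a_{-1}\la^{-1}$ and $\partial_z$ parts acting on $\image a_{-1}^{s-1}$ land back in $\image a_{-1}^s$ precisely because of the stated inclusion. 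This yields $W_{(i)} \subset \psi(\sum_{r=1}^i \image a_{-1}^r + \H_+)$. Finally, if $a_{-1}$ is nilpotent, say $a_{-1}^N = 0$, the sum stabilizes and the stabilization of $W_{(i)}$ gives finiteness via Theorem \ref{finite uniton}(iii); conversely, if $W$ is of finite uniton number then $a_{-1}$ is nilpotent by Corollary \ref{a-1 nilpotent}.

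The main obstacle I anticipate is the bookkeeping in part (i): establishing that $W_{(2)}$ is \emph{exactly} $\la^{-1}W$ rather than merely containing it requires matching dimensions of the shift-quotients $W_{(2)} \ominus SW_{(2)}$, and verifying that the surjectivity hypothesis produces precisely a full-rank increment without overshooting. One must check that no \emph{additional} negative powers beyond $\la^{-1}$ appear (guaranteed by $a_{-1}^2 = 0$) and simultaneously that the $\la^{-1}$ level is saturated (guaranteed by surjectivity), so the two hypotheses must be used in tandem against the rank-$n$ constraint. The self-similar iteration to reach $W_{(2i)} = \la^{-i}W$ then needs the observation that $\la^{-1}W = \psi(\la^{-1}\H_+)$ is again of the same form with the same $a_{-1}$, so the hypotheses persist; I would make this precise rather than merely assert it.
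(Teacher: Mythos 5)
Your proposal follows the paper's proof in all of its operative steps. For (i), the paper likewise starts from Lemma \ref{T_psi}, computes $T^2_\psi(\la)h_2+T_\psi(\la)h_1$ explicitly, notes that the $\la^{-2}$ coefficient is exactly $a_{-1}^2h_2=0$, and uses the surjectivity hypothesis (with the free parameter $h_1$ supplying the $\image{a_{-1}}$ summand) to saturate the $\la^{-1}$ level, obtaining $W_{(2)}=\la^{-1}\psi\H_+=\la^{-1}W$; it then iterates, exactly as you propose, via the self-similarity you flag at the end (multiplication by $\la^{-1}$ commutes with $\psi$, so $\la^{-1}\psi$ has the same potential). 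Incidentally, your worry about matching the dimensions of $W_{(2)}\ominus SW_{(2)}$ is unnecessary: once the $\la^{-1}$ coefficients sweep out all of $\C^n$ and no $\la^{-2}$ term appears, equality follows directly from $\psi(\la^{-1}\C^n+\H_+)=\la^{-1}\psi\H_+$. For (ii), your coefficientwise induction (the $\la^{-s}$ coefficient of $T^ru$ lies in $\image{a_{-1}^s}$) is precisely the paper's invariance statement $\partial_z V_i+V_i\subseteq V_{i+1}$ for $V_i=\psi\bigl(\bigl\{\sum_{r=1}^i\la^{-r}a_{-1}^rh_r\bigr\}+\H_+\bigr)$, proved there via the Leibniz expansion of $\partial_z a_{-1}^r$, and the equivalence is closed the same way, through Theorem \ref{finite uniton}(iii) and Corollary \ref{a-1 nilpotent}.

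Two corrections are needed, though neither changes the architecture. First, your claim in (i) that $a_{-1}^2=0$ kills \emph{all} powers $\la^{-k}$, $k\ge 2$, in $T^ru$ because ``any product of two or more copies of $a_{-1}$ vanishes'' is false for $r\ge 3$: only \emph{consecutive} products vanish, while interleaved terms such as $a_{-1}a_0a_{-1}$ and $a_{-1}(\partial_za_{-1})$ survive and genuinely contribute to $\la^{-2}$ in $T^3u$. Taken literally, your assertion would force $W_{(i)}\subseteq\la^{-1}W$ for all $i$, contradicting the very conclusion $W_{(2i)}=\la^{-i}W$ you are proving; fortunately your actual argument only uses the case $r\le 2$, where the $\la^{-2}$ coefficient really is $a_{-1}^2h_2$, with the higher negative powers then generated by the iteration --- which is exactly how the paper proceeds. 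You should restrict the claim accordingly. Second, the hypotheses hold only on the open set $U$, and the representation of Lemma \ref{T_psi} is valid only off discrete sets $D_i$; to obtain the stated identities $W_{(2i)}=\la^{-i}W$ on all of $M$ in (i), and stabilization on all of $M$ in (ii), one must invoke analytic continuation, using that the $W_{(i)}$ are holomorphic subbundles of $\HH$. The paper makes this step explicit, and your write-up omits it.
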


\begin{proof} (i) Note that the second part of the statement follows from the first part together with Theorem \ref{finite uniton}(iii).
	By Lemma \ref{T_psi} we have
$$W_{(2)}(z)=\psi(\cdot,z)\{T^2_\psi(\cdot)h_2(z)+T_\psi(\cdot)h_1(z)+v:~h_1,h_2\in C^\infty(M,\C^n),\, v\in \H_+\},$$
on $U\setminus D_2$, where $D_2\subset M$ is discrete.
 Using the notation in  \eqref{deriv-psi}, we obtain after a direct computation,
\begin{align*}&T^2_\psi(\la)h_2(z)+T_\psi(\la)h_1(z)=\\
&\la^{-1}\bigl(a_0(z)a_{-1}(z)h_2(z)+\partial_za_{-1}(z)\bigr)h_2(z)
+a_{-1}(z)\bigl(a_0(z)h_2(z)+2\partial_zh_2(z)+h_1(z)\bigr)\\
& \space\space\space\space +\la^{-2}a_{-1}^2h_2(z)+v_1(\la,z),
\end{align*}
with $v_1(\cdot,z)\in \H_+$. Thus, by assumption,
\begin{align*}W_{(2)}=&\psi\bigl\{\la^{-1}\bigl((a_0a_{-1}+\partial_za_{-1})h_2+a_{-1}h_3 \bigr) \
:~ h_2,h_3\in C^\infty(M,\C^n)\bigr\} + \psi\H_+ \\
=&\la^{-1}\psi \H_+ = \la^{-1}W, \end{align*}
on $U\setminus D_2$.
 This immediately implies  that the first part of the statement  holds for each $i\ge 0$ on $U\setminus D_i$ with $D_i$ discrete, and so by analytic continuation, on the whole of  $M$.
 \smallskip

(ii)  Again working on $U$, set $$V_i(z)=\psi(\la,z)\bigl(\bigl\{\sum_{r=1}^i\la^{-r}a_{-1}^r(z)h_r(z):~h_r\in C^\infty(M,\C^n),\,1\le r\le i\bigr\} +\H_+ \bigr).$$
Using the identity $$\partial_z  a_{-1}^r=(\partial_z a_{-1})a_{-1}^{r-1}+\sum_{k+l=r-1}a_{-1}^k(\partial_za_{-1})a_{-1}^l, \quad r\ge 1,$$
together with the assumption, it follows easily that
$$\partial V_i+V_{i-1}\subset V_{i+1}.$$
Moreover, $V_1=W_{(1)}$ on $U\setminus D_1$, hence we derive by induction that $W_{(i)}\subset V_i$ on $U\setminus D_i$, for all  $i\ge 1$, where the $D_i\subset M$ are discrete.
If $a_{-1}$ is nilpotent, we see that the Gauss sequence of $W$ stabilizes on some open set $U'$, and so, by analytic continuation, on $M$, hence $W$ is of finite uniton number by Theorem \ref{finite uniton}(iii). The converse is given by Corollary \ref{a-1 nilpotent}.
	\end{proof}

 By allowing the occurrence of singularities, it was shown in \cite{DPW} that any extended solution, and so harmonic map, arises from a solution to   $\psi^{-1}\partial_z\psi=\lambda^{-1}a_{-1}(z)$, with $a_{-1}$ a \emph{meromorphic} function on $M$; such a $\lambda^{-1}a_{-1}(z)$ is called a \emph{meromorphic potential}.

\begin{corollary} \label{cor:extreme cases}  Let $W=\psi\H_+$ be an extended solution with meromorphic potential $\psi^{-1}\partial_z\psi=\lambda^{-1}a_{-1}(z)$ with
$a_{-1}$ not identically zero.
\begin{enumerate}
\item[(i)]  Assume that  $a_{-1}^2=0$, and
\begin{equation} \label{dim-assumption}
 \image{\partial_za_{-1}(z)}+\image{a_{-1}(z)}=\C^n
 \end{equation}
 for all $z$ on an open non-void subset $U \subset M$.
Then $n$ is even and $W$ is not of finite uniton number.
\item[(ii)]  Assume that
$$\image{\partial_za_{-1}(z)}\subset\image{a_{-1}(z)}$$
 for all $z$ on an open non-void subset $U \subset M$.
 Then  $W$ is of finite uniton number if and only if $a_{-1}$ is nilpotent.
 \end{enumerate}
\end{corollary}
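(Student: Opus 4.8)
The plan is to read off both parts from Proposition \ref{extreme cases}, using the observation that a meromorphic potential is precisely the case of the expansion \eqref{deriv-psi} in which $a_j \equiv 0$ for every $j \ge 0$, so that $a_{-1}$ is the only surviving coefficient. Once this is noted, the two hypotheses of the Corollary are tailored to the two hypotheses of the Proposition.

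For part (ii), I would first verify the hypothesis of Proposition \ref{extreme cases}(ii), namely $\image{(a_j a_{-1} + \partial_z a_{-1})} \subseteq \image{a_{-1}}$ for all $j \ge -1$. Since $a_j = 0$ for $j \ge 0$, this reduces for such $j$ to the assumed inclusion $\image{\partial_z a_{-1}} \subseteq \image{a_{-1}}$, while for $j = -1$ it reads $\image{(a_{-1}^2 + \partial_z a_{-1})} \subseteq \image{a_{-1}}$, which follows from the assumption together with the trivial inclusion $\image{a_{-1}^2} \subseteq \image{a_{-1}}$. Proposition \ref{extreme cases}(ii) then delivers the stated equivalence verbatim, with the ``only if'' direction already covered by Corollary \ref{a-1 nilpotent}.

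For part (i), setting $a_0 = 0$ turns the full-rank condition of Proposition \ref{extreme cases}(i) into exactly the hypothesis \eqref{dim-assumption}, so the Proposition immediately gives that $W$ is not of finite uniton number. The one piece of genuinely new content is that $n$ must be even, which I would obtain by a rank count at a generic point $z_0 \in U$ where $a_{-1}$ has locally constant rank $r := \dim \image{a_{-1}(z_0)}$. From $a_{-1}^2 = 0$ we get $\image{a_{-1}} \subseteq \ker a_{-1}$, hence $r \le n - r$. The crucial step is that $\partial_z a_{-1}$ maps $\ker a_{-1}$ into $\image{a_{-1}}$: choosing a local holomorphic section $v(z)$ of $\ker a_{-1}$ and differentiating the identity $a_{-1} v = 0$ yields $(\partial_z a_{-1}) v = -a_{-1}(\partial_z v) \in \image{a_{-1}}$. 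Writing $\C^n = \ker a_{-1}(z_0) \oplus W'$ with $\dim W' = r$, we then get $\image{\partial_z a_{-1}(z_0)} + \image{a_{-1}(z_0)} = \image{a_{-1}(z_0)} + (\partial_z a_{-1})(W')$, of dimension at most $2r$; comparing with \eqref{dim-assumption} forces $n \le 2r$, so $n = 2r$ is even.

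I expect the only real obstacle to be this kernel-into-image step, since it requires a section of $\ker a_{-1}$ depending smoothly (indeed holomorphically) on $z$, which is legitimate only where the rank of $a_{-1}$ is locally constant. I would therefore work at a generic $z_0 \in U$, using that a meromorphic (hence real-analytic) matrix-valued function has constant rank off a discrete set, together with the fact that \eqref{dim-assumption} is assumed throughout $U$; everything else is a routine dimension count.
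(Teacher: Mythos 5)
Your proposal is correct and takes essentially the same route as the paper: both parts are read off from Proposition \ref{extreme cases} after observing that a meromorphic potential has $a_j\equiv 0$ for $j\ge 0$ (with your small check of the $j=-1$ case in part (ii) matching the paper's ``direct consequence''), and the evenness of $n$ comes from the same two-sided rank count $2m\le n\le 2m$ on the locus where $a_{-1}$ has maximal rank. Your kernel-into-image lemma $(\partial_z a_{-1})(\ker a_{-1})\subseteq \image{a_{-1}}$ at constant-rank points is a valid, more explicit justification of the bound $\dim\bigl(\image{\partial_z a_{-1}}+\image{a_{-1}}\bigr)\le 2m$, which the paper asserts without detail (it can also be seen by noting this sum lies in the pointwise span of a local frame of $\image{a_{-1}}$ together with its $\partial_z$-derivatives).
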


\begin{proof} (i)  We work on the open subset $V$ of $U$ on which $a_{-1}$ has maximal rank; call this rank $m$.
{}From $a_{-1}^2=0$, we have $\image{a_{-1}} \subseteq \ker a_{-1}$.  But, as always, $\dim\image{a_{-1}} + \dim\ker{a_{-1}} = n$ so that $2m \leq n$.  On the other hand, the left-hand side of \eqref{dim-assumption} has dimension at most $2m$ so that $n \leq 2m$.
The rest is immediate from part (i) of the proposition.

(ii)  This is a direct consequence of Proposition  \ref{extreme cases}(ii).
\end{proof}

\section{Applications to harmonic maps}

\subsection{Applications of Theorem \ref{finite uniton}} \label{subsec:applns ha maps}
We now apply our criterion for finiteness of the uniton number to harmonic maps from a Riemann surface $M$.  Although these have extended solutions locally, they may not have extended solutions defined on the whole of $M$; this means that Remark \ref{rem:extd local} does not immediately apply to harmonic maps.  However, we have the following result, which we were unable to find elsewhere; it is proved using our criterion.

\begin{proposition} \label{finite uniton descends}  Let $M$ and $\wt{M}$ be Riemann surfaces and $\Pi:\wt{M} \to M$ a non-constant holomorphic map. Let $\v:M \to \U(n)$ be a harmonic map.  Then
$\v$ is of finite uniton number if and only if\/ $\v \circ \Pi$ is of finite uniton number.
\end{proposition}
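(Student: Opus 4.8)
The plan is to prove the two implications separately, reducing everything to the criterion of Theorem~\ref{finite uniton} and to the real-analyticity of harmonic maps. The forward implication is global and essentially formal, while the reverse one is local in nature and forces us to confront the fact, emphasised just above, that $\v$ itself may possess no extended solution on all of $M$.

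For the implication ``$\v$ of finite uniton number $\Rightarrow$ $\v\circ\Pi$ of finite uniton number'' I would start from a global polynomial extended solution $\Phi_0$ for $\v$ as in \eqref{finite-uniton-no} and simply pull it back: set $\wt\Phi_0(\la,w)=\Phi_0(\la,\Pi(w))$. Writing $z=\Pi(w)$ with $\Pi$ holomorphic, so that $dz=\Pi'\,dw$ and $d\bar z=\overline{\Pi'}\,d\bar w$, one checks directly that \eqref{extsol} is preserved, with $\wt A_w=(A_z\circ\Pi)\,\Pi'$ and $\wt A_{\bar w}=(A_{\bar z}\circ\Pi)\,\overline{\Pi'}=-(\wt A_w)^\ast$; moreover $\wt\Phi_0(1,\cdot)=I$ and $\wt\Phi_0$ is still a trigonometric polynomial in $\la$ with coefficients $C_k\circ\Pi$. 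Thus $\wt\Phi_0$ is a global polynomial extended solution associated (up to a constant) to $\v\circ\Pi$, valid even across the branch points of $\Pi$, and so $\v\circ\Pi$ has finite uniton number.

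For the converse I would first localise. Since $\Pi$ is non-constant holomorphic its branch locus is discrete, so I may pick $\wt p\in\wt M$ and a simply connected neighbourhood $\wt U$ on which $\Pi$ restricts to a biholomorphism onto $U=\Pi(\wt U)\subset M$. Restricting the global polynomial extended solution of $\v\circ\Pi$ to $\wt U$ and transporting it through $(\Pi|_{\wt U})^{-1}$ produces a polynomial extended solution for $\v|_U$; hence $\v|_U$ is of finite uniton number. Forming $W=\Phi\,\H_+$ for such a local extended solution and applying Theorem~\ref{finite uniton}, the Gauss sequence $(W_{(i)})$ stabilises on $U$, say $W_{(N+1)}=W_{(N)}$. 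The crucial point is that this is a real-analytic, monodromy-invariant condition on $M$: by \eqref{add-uniton} it is equivalent to $A^{\varphi_N}_z=0$ (where $\varphi_N=\Phi_N(-1,\cdot)$), a quantity which is unchanged when $\Phi_N$ is replaced by $v\,\Phi_N$ for a loop $v$ constant in $z$, and which is real-analytic by Remark~\ref{rem:Grauert}(b). Vanishing on the open set $U$ therefore forces vanishing on all of $M$, so the Gauss sequence stabilises globally.

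It remains to manufacture a genuinely global polynomial extended solution for $\v$, and this is where the main obstacle lies: the extended solution $\Phi$ underlying $W$ is only defined up to the monodromy $\Phi\mapsto v_\gamma\Phi$ by constant-in-$z$ loops $v_\gamma$, and the unitons $\alpha_i$ and the stabilised factor $\Phi_N$ inherit this ambiguity, so none of these objects is single-valued on $M$. I would resolve this by a cancellation argument. Once $W_{(N+1)}=W_{(N)}$ one has $A^{\varphi_N}_z=0=A^{\varphi_N}_{\bar z}$, whence $\Phi_N^{-1}d\Phi_N=0$, i.e.\ $\Phi_N=\hat\Phi(\la)$ is constant in $z$; and iterating \eqref{add-uniton} gives $\Phi=\hat\Phi\,\Psi$ with $\Psi=\prod_{i=N}^{1}\bigl(\pi_{\alpha_i}+\la\pi_{\alpha_i}^\perp\bigr)$ a polynomial in $\la$. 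Since the whole Gauss sequence transforms covariantly, $\Phi_i\mapsto v_\gamma\Phi_i$ under monodromy (as $\partial_z$ commutes with left multiplication by $v_\gamma$), and since the normalisation $\Phi_i(1,\cdot)=I$ removes the residual right ambiguity, the factor $\hat\Phi$ carries exactly the same monodromy $v_\gamma$ as $\Phi$. Therefore $\Psi=\hat\Phi^{-1}\Phi$ is monodromy-invariant, hence a single-valued, globally defined, polynomial extended solution on $M$ with $\Psi(1,\cdot)=I$ and associated (up to a constant left factor) to $\v$. This shows $\v$ is of finite uniton number and completes the argument.
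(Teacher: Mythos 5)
Your argument is correct, and while your forward direction coincides with the paper's (pull back a polynomial extended solution through $\Pi$), your converse takes a genuinely different route. The paper never localizes or mentions monodromy: it observes that the uniton sequence $\v_i = \v_{i-1}(\pi_{\alpha_i}-\pi_{\alpha_i}^{\perp})$ with $\alpha_i^{\perp} = \image A^{\v_{i-1}}_z$ is built \emph{intrinsically} from the harmonic map on $M$ (no extended solution needed, since $A_z$ is unchanged by constant left factors), shows by induction that $\wt\v_i = \v_i \circ \Pi$ and $\wt\alpha_i = \alpha_i \circ \Pi$ away from branch points (filling out zeros at them), and concludes that $\wt\v_r$ constant forces $\v_r$ constant on $\Pi(\wt M)$ and hence on $M$ by analytic continuation, with the explicit global solution $\Phi_0 = \ga(\pi_{\alpha_r}+\la\pi_{\alpha_r}^{\perp})\cdots(\pi_{\alpha_1}+\la\pi_{\alpha_1}^{\perp})$ satisfying $\wt\Phi_0 = \Phi_0\circ\Pi$. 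You instead invert $\Pi$ near an unbranched point to get finite uniton number for $\v|_U$, then globalize the stabilization condition $A^{\v_N}_z = 0$ by real analyticity and monodromy invariance, and manufacture a single-valued polynomial solution via the cancellation $\Psi = \hat\Phi^{-1}\Phi$; in effect you prove the local-to-global statement (Corollary \ref{cor:fi uniton no local}, (ii)$\Rightarrow$(i)) first and deduce the proposition from it, which buys you a proof needing only one unbranched point of $\Pi$ rather than the branch-point analysis along all of $\wt M$, at the cost of the monodromy bookkeeping the paper's formulation avoids. One small inaccuracy: your claim that the unitons $\alpha_i$ ``inherit the ambiguity'' and are not single-valued on $M$ is wrong --- since $A^{v\v_{i-1}}_z = A^{\v_{i-1}}_z$ for any constant loop $v$, the $\alpha_i$ \emph{are} monodromy-invariant (this is exactly what the paper exploits), so your cancellation argument, though valid, is proving the invariance of $\prod_i(\pi_{\alpha_i}+\la\pi_{\alpha_i}^{\perp})$ by a more roundabout route than necessary; only the constant factor $\hat\Phi = \Phi_N$ genuinely carries the monodromy $v_\gamma$.
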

\begin{proof} `Only if' follows by noting that if $\Phi$ is a polynomial extended solution associated to $\v$, then $\Phi \circ \Pi$ is a polynomial extended solution associated to $\v \circ \Pi$.

Conversely, suppose that $\wt\v_0:= \v \circ \Pi$ is of finite uniton number.  Let $\wt\Phi_0$ be an extended solution on $\wt{M}$ with
$\wt\Phi_0(-1,\cdot) = \wt\v_0$.
Then, from \eqref{add-uniton}, its Gauss sequence $\wt{W}_{(i)} = \wt{\Phi_i}\H_+$ on $\wt{M}$ is given inductively by
\begin{equation*}
\wt{\Phi}_i
=\wt{\Phi}_{i-1}(\pi_{\wt\alpha_i} + \la^{-1}\pi_{\wt\alpha_i}^{\perp}), \quad
\text{and so} \quad
\wt\v_i =\wt\v_{i-1}(\pi_{\wt\alpha_i} - \pi_{\wt\alpha_i}^{\perp}),
\end{equation*}
where $\wt\v_i = \wt\Phi_i(-1,\cdot)$ and $\wt\alpha_i^{\perp} = \image A^{\wt\v_{i-1}}_{\wt z}$ for any local complex coordinate $\wt z$.
Since $\wt\v_0$ is of finite uniton number, this terminates, i.e. there is an $r \in \N$ and $\ga:S^1 \to \U(n)$ such that
$\wt\Phi_r(\cdot,z) = \ga$ for all $z \in M$, equivalently,
$\wt\v_r$ is constant; so
$$\wt\Phi_0 = \ga (\pi_{\wt\alpha_r} + \la\pi_{\wt\alpha_r}^{\perp}) \cdots
(\pi_{\wt\alpha_1} + \la\pi_{\wt\alpha_1}^{\perp}).$$

Now, from $\v:M \to \U(n)$ we can define a sequence of maps $\v_i:M \to \U(n)$ by $\v_0 = \v$ and
$\v_i =\v_{i-1}(\pi_{\alpha_i} - \pi_{\alpha_i}^{\perp})$,
where $\alpha_i^{\perp} = \image A^{\v_{i-1}}_z$ for any local complex coordinate $z$.
We claim that, for $i=1,2,\ldots, r$, (i) $\wt\v_{i-1} = \v_{i-1} \circ \Pi$ and (ii) $\wt\alpha_i = \alpha_i \circ \Pi$.
By construction, (i) holds for $i=1$. Assume that (i) holds for some $i$, then,
away from the branch points of the holomorphic map $\Pi$, working with local complex coordinates $\wt z$ and $z=\Pi(\wt z)$ of $\wt M$ and $M$, respectively, and filling out zeros at branch points, we have
$\wt\alpha_i^{\perp} = \image A^{\wt\v_{i-1}}_{\wt z} = \image A^{\v_{i-1}}_z \circ \Pi = \alpha_i^{\perp}$ so that,  from $\v_i =\v_{i-1}(\pi_{\alpha_i} - \pi_{\alpha_i}^{\perp})$ we have $\wt\v_i = \v_i \circ \Pi$, completing the induction step.

In particular, since $\wt\v_r$ is constant, so is $\v_r$ on the open set $\Pi(\wt M)$ and so, by analytic continuation, on the whole of $M$.  Hence, $\v$ is of finite uniton number.  In fact, $\wt\Phi_0 = \Phi_0 \circ \Pi$, where
$$
\Phi_0 = \ga (\pi_{\alpha_r} + \la\pi_{\alpha_r}^{\perp}) \cdots
(\pi_{\alpha_1} + \la\pi_{\alpha_1}^{\perp});
$$
$\Phi_0$ is an extended solution of $\v$ on the open set $\Pi(\wt M)$ and so, by analytic continuation, on the whole of $M$.
\end{proof}

Recall that any Riemann surface $M$ has a simply connected covering space $\wt{M}$ with holomorphic projection
$\Pi: \wt{M} \to M$.  Since any harmonic map on a simply connected Riemann surface has an extended solution, we obtain the following non-trivial extension to Remark \ref{rem:extd local}.

{\begin{corollary} \label{cor:fi uniton no local}
Let $\v:M \to \U(n)$ be a harmonic map, and let $\Pi: \wt{M} \to M$ be a simply connected covering space. Then the following are equivalent$:$
\begin{enumerate}
  \item[(i)]  $\v$ is of finite uniton number$;$
   \item[(ii)]  $\v|_U$ is of finite uniton number  for some open set $U$ of $M$$;$
   \item[(iii)] $\v \circ \Pi:\wt{M} \to \U(n)$ is of finite uniton number.
\end{enumerate}
\end{corollary}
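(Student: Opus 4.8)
The plan is to deduce this corollary almost entirely from the preceding Proposition \ref{finite uniton descends}, together with the locality of the criterion recorded in Remark \ref{rem:extd local}. First I would establish the equivalence (i) $\Leftrightarrow$ (iii): since $\Pi:\wt M \to M$ is a holomorphic covering projection, it is in particular a non-constant holomorphic map between Riemann surfaces, so Proposition \ref{finite uniton descends} applies directly and gives that $\v$ is of finite uniton number if and only if $\v \circ \Pi$ is. This is the crux and it requires no new argument beyond citing the proposition.

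Next I would handle (i) $\Leftrightarrow$ (ii). The implication (i) $\Rightarrow$ (ii) is trivial: a polynomial extended solution on $M$ restricts to one on any open set $U$, so finiteness of the uniton number is inherited by restrictions. For the converse (ii) $\Rightarrow$ (i), the idea is to pass to the simply connected cover. On $\wt M$ the harmonic map $\wt\v := \v \circ \Pi$ has a globally defined extended solution $\wt\Phi$ (since $\wt M$ is simply connected), so we may represent its Gauss sequence $\wt W_{(i)} = \wt\Phi_i\H_+$ globally and apply the operator-theoretic criterion of Theorem \ref{finite uniton}. The point is that finiteness of the uniton number is detected by whether the Gauss sequence stabilizes, and by Remark \ref{rem:extd local} (real analyticity) this stabilization, if it holds on any open set, holds everywhere on $\wt M$.

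Concretely, suppose $\v|_U$ is of finite uniton number for some open $U \subseteq M$. Choose an open set $\wt U \subseteq \wt M$ on which $\Pi$ restricts to a biholomorphism onto an open subset of $U$ (such $\wt U$ exists away from the branch points, and $\Pi$ being a covering map, it is in fact a local biholomorphism everywhere). Then $\wt\v|_{\wt U} = \v|_{\Pi(\wt U)} \circ (\Pi|_{\wt U})$ is the composition of a finite-uniton-number harmonic map with a biholomorphism, hence is itself of finite uniton number; equivalently, the Gauss sequence of $\wt\Phi$ stabilizes on $\wt U$. By Remark \ref{rem:extd local} applied on the (simply connected, hence connected) surface $\wt M$, the Gauss sequence stabilizes on all of $\wt M$, so $\wt\v = \v\circ\Pi$ is of finite uniton number. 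Invoking the already-established (iii) $\Rightarrow$ (i) then yields that $\v$ is of finite uniton number.

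The main obstacle I anticipate is not in the logical skeleton, which is a clean bootstrapping through the cover, but in the careful bookkeeping around two points: first, ensuring that restricting to an open set $\wt U$ where $\Pi$ is a local biholomorphism genuinely transfers the finite-uniton-number property (this uses that the property is preserved under composition with a biholomorphism, a degenerate but legitimate case of Proposition \ref{finite uniton descends}); and second, verifying that Remark \ref{rem:extd local} is being invoked on $\wt M$ rather than $M$, since it is on $\wt M$ that we have a global extended solution and hence a well-defined Gauss sequence to which the real-analyticity argument applies. Once these two points are handled, the three equivalences close up: (i) $\Leftrightarrow$ (iii) from the proposition, (i) $\Rightarrow$ (ii) trivially, and (ii) $\Rightarrow$ (iii) $\Rightarrow$ (i) through the cover.
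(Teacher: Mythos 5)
Your proof is correct, and for (i) $\Leftrightarrow$ (iii) it coincides with the paper's: both simply apply Proposition \ref{finite uniton descends} to the covering map $\Pi$. Where you diverge is (ii) $\Leftrightarrow$ (i). The paper's entire proof is one line: it applies Proposition \ref{finite uniton descends} a second time, to the \emph{inclusion map} $\iota: U \hookrightarrow M$, which is itself a non-constant holomorphic map between Riemann surfaces, so $\v|_U = \v \circ \iota$ has finite uniton number if and only if $\v$ does --- no detour through the cover is needed. You missed this degenerate-but-decisive application (ironically, you do invoke the same degenerate mechanism when you note that precomposition with a biholomorphism preserves finite uniton number), and instead you bootstrap through $\wt M$: transfer $\v|_U$ to a sheet $\wt U$ of the cover, use Theorem \ref{finite uniton}(iii) to see the Gauss sequence of the global extended solution $\wt\Phi$ stabilizes on $\wt U$, and propagate the stabilization to all of $\wt M$ by the real-analyticity of Remark \ref{rem:extd local}. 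This route is legitimate --- in effect you re-prove, in your special case, the ``converse'' half of Proposition \ref{finite uniton descends}, whose own proof runs on exactly this kind of analytic continuation --- and it has the pedagogical merit of exposing the mechanism rather than quoting it; but it costs you extra machinery, including the (paper-asserted) fact that any two extended solutions on a connected domain differ by a constant loop, which you need to pass from ``$\wt\v|_{\wt U}$ has finite uniton number'' to ``the Gauss sequence of the \emph{particular} solution $\wt\Phi|_{\wt U}$ stabilizes'' (so choose $\wt U$ connected). Your aside about branch points is superfluous, as you yourself note: a holomorphic covering map is an unbranched local biholomorphism everywhere.
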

\begin{proof}
  The equivalences follow by applying Proposition \ref{finite uniton descends} to the covering map  $\Pi:\wt{M} \to M$ and the inclusion map of $U$ in $M$.
\end{proof}

In order to apply the criteria of Theorem \ref{finite uniton}, we can choose $\psi$ to be a local extended solution of $\v$.  Note that any two extended solutions differ by premultiplication by a constant loop, which does not affect
any of the criteria. In particular, $A_\psi(\lambda,\cdot)=(1-\la^{-1})A_z^\v$ so that
$T_{\psi} = T_{\v}$ where we define $T_{\v} = \pa_z + (1-\la^{-1})A^{\v}_z$.  Using Corollary \ref{cor:fi uniton no local}, we deduce the following from Theorem \ref{finite uniton}, which shows that we can use $T_{\v}$ to test a harmonic map for finite uniton number \emph{without assuming that it has an extended solution on the whole of its domain}.

\begin{corollary} \label{cor:T criterion for ha map}
Let $\v:M \to \U(n)$ be a harmonic map. Then $\v$ is of finite uniton number if and only if the maximum power of\/ $\la^{-1}$ in  $T_{\v}^r$ stays bounded on some open subset of $M$ when $r\in \mathbb{N}$.
\qed\end{corollary}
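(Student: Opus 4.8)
The plan is to assemble Theorem \ref{finite uniton} together with the locality of the finite-uniton-number property established in Corollary \ref{cor:fi uniton no local}. The one point needing attention is that $\v$ need not admit an extended solution on all of $M$, so Theorem \ref{finite uniton} cannot be invoked globally; instead I would localise to an open set on which an extended solution exists and then propagate the conclusion.

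First I would fix a simply connected open subset $U \subseteq M$. Being simply connected, $U$ carries an extended solution $\Phi$ of $\v|_U$ (the existence recalled in \S\ref{sec:intro}). Put $\psi = \Phi$ and $W = \psi\H_+$, so that $\psi^{-1}\pa_z\psi$ has the form \eqref{deriv-psi}; moreover, because $\psi$ is an extended solution, equation \eqref{extsol} gives $A_\psi(\la,\cdot) = (1-\la^{-1})A_z^\v$, whence the operator \eqref{def-T} satisfies $T_\psi(\la) = \pa_z + (1-\la^{-1})A_z^\v = T_\v$ on $U$ and therefore $T_\psi^r = T_\v^r$ there for every $r \in \N$. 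Now the equivalence (i) $\Leftrightarrow$ (iv) of Theorem \ref{finite uniton}, applied to $W$, says that the extended solution $W$ --- equivalently its underlying map $\v|_U$ --- is of finite uniton number if and only if the maximum power of $\la^{-1}$ in $T_\psi^r = T_\v^r$ stays bounded as $r$ ranges over $\N$. Combined with Corollary \ref{cor:fi uniton no local}, which asserts that $\v$ is of finite uniton number if and only if $\v|_U$ is, this already yields the `only if' direction, with the boundedness then holding on $U$.

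For the `if' direction I would start instead from an arbitrary open set on which the boundedness of $T_\v^r$ is assumed, shrink it to a simply connected $U'$ carrying an extended solution $\psi$ of $\v|_{U'}$, observe that $T_\psi^r = T_\v^r$ still satisfies the boundedness on $U'$, apply (iv) $\Rightarrow$ (i) of Theorem \ref{finite uniton} to conclude that $\v|_{U'}$ is of finite uniton number, and finish with Corollary \ref{cor:fi uniton no local}. I do not anticipate a genuine obstacle: the corollary is a repackaging of two results already proved. The single delicate point --- and the reason the localisation is needed at all --- is the mismatch of hypotheses, since $T_\v$ is defined intrinsically from $A_z^\v$ everywhere on $M$, whereas Theorem \ref{finite uniton} requires a globally defined $\psi$ with $W = \psi\H_+$; passing through a local extended solution and Corollary \ref{cor:fi uniton no local} is precisely what reconciles the two.
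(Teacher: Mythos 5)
Your proposal is correct and follows essentially the same route as the paper: the paper also deduces the corollary by choosing $\psi$ to be a local extended solution (which exists on any simply connected open set), noting $A_\psi(\la,\cdot)=(1-\la^{-1})A_z^\v$ so that $T_\psi=T_\v$, applying the equivalence (i)\,$\Leftrightarrow$\,(iv) of Theorem \ref{finite uniton} on that open set, and then transferring the conclusion between the open set and all of $M$ via Corollary \ref{cor:fi uniton no local}. Your explicit attention to the hypothesis mismatch (that $T_\v$ is defined globally while Theorem \ref{finite uniton} needs a globally defined $\psi$) is exactly the point the paper flags when it says the test works ``without assuming that it has an extended solution on the whole of its domain.''
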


The sufficient condition for the uniton number to be finite provided by Proposition \ref{extreme cases}(ii) is then useful as the potential $A_\psi$ is a polynomial of degree one in $\lambda^{-1}$:

\begin{corollary}\label{extreme for extended} Let $\v:M \to \U(n)$ be a harmonic map.
	 If\/ $\image{\partial_zA_z^\v}\subset\image{A_z^\v}$
 for all $z$ on an open non-void subset $U \subset M$, then $\v$ has finite uniton number if and only if  $A_z^\v$ is nilpotent.
\end{corollary}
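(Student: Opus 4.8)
The plan is to reduce the statement to Proposition \ref{extreme cases}(ii) applied to a \emph{local} extended solution of $\v$, and then to transfer the resulting equivalence to $\v$ itself by means of the locality established in Corollary \ref{cor:fi uniton no local}.

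First I would restrict to a simply connected open subset $U' \subseteq U$ on which $\v$ admits a smooth extended solution $\psi$, and set $W = \psi\H_+$. As recalled in the discussion preceding the corollary, for such a $\psi$ one has $A_\psi(\la,\cdot) = (1-\la^{-1})A_z^\v$, so the expansion \eqref{deriv-psi} reads $a_{-1} = -A_z^\v$, $a_0 = A_z^\v$ and $a_j = 0$ for $j \geq 1$. In particular $a_{-1}$ is nilpotent if and only if $A_z^\v$ is, which is what keeps the two formulations of nilpotency aligned.

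The core of the argument is to verify that the hypothesis of Proposition \ref{extreme cases}(ii), namely $\image{(a_j a_{-1} + \partial_z a_{-1})} \subseteq \image{a_{-1}}$ for every $j \geq -1$, holds on $U'$. Since $a_j$ vanishes for $j \geq 1$ and $a_0 = -a_{-1} = A_z^\v$, each quantity $a_j a_{-1} + \partial_z a_{-1}$ is, up to sign, a combination of $(A_z^\v)^2$ and $\partial_z A_z^\v$. The inclusion $\image{(A_z^\v)^2} \subseteq \image{A_z^\v}$ is automatic, and $\image{\partial_z A_z^\v} \subseteq \image{A_z^\v}$ is exactly the standing hypothesis of the corollary; hence the hypothesis of the proposition is satisfied. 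Proposition \ref{extreme cases}(ii) then yields that $W$ is of finite uniton number if and only if $a_{-1}$, equivalently $A_z^\v$, is nilpotent.

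It remains to pass from $W$ back to $\v$. Since $W = \psi\H_+$ is an extended solution of $\v|_{U'}$, finiteness of the uniton number of $W$ is the same as finiteness for $\v|_{U'}$, and Corollary \ref{cor:fi uniton no local} promotes this to finiteness for $\v$ on all of $M$; by real analyticity of $A_z^\v$, nilpotency on the open set $U'$ propagates to $M$, so the two sides of the equivalence match globally. The only delicate point — more bookkeeping than genuine obstacle — is precisely the collapse noted above: that the whole family of image conditions in Proposition \ref{extreme cases}(ii), indexed by all $j \geq -1$, reduces to the single displayed hypothesis together with the trivial inclusion $\image{(A_z^\v)^2} \subseteq \image{A_z^\v}$. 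Once this is seen, the result is immediate.
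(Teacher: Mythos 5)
Your proof is correct and follows essentially the same route as the paper's: the paper likewise takes a local extended solution $\psi$ on an open subset of $U$, notes $A_\psi(\la,\cdot)=(1-\la^{-1})A_z^\v$, and concludes by a direct application of Proposition \ref{extreme cases}(ii) together with Corollary \ref{cor:fi uniton no local}. Your explicit check that the family of conditions $\image{\bigl(a_ja_{-1}+\partial_za_{-1}\bigr)}\subset \image{a_{-1}}$, $j\ge -1$, collapses to the stated hypothesis via the trivial inclusion $\image{(A_z^\v)^2}\subset\image{A_z^\v}$ (since $a_j=0$ for $j\ge 1$ and $a_0=-a_{-1}=A_z^\v$) merely fills in details the paper leaves implicit.
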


\begin{proof} On some open subset of $U$, $\v$ has an extended solution $\psi$. Then, as above, $A_\psi(\lambda,\cdot)=-\la^{-1}A_z^\v+A_z^\v$, and the result is a direct application of Proposition \ref{extreme cases}(ii)
and Corollary \ref{cor:fi uniton no local}.
	\end{proof}
	
In the case when $n=2$,  Proposition \ref{extreme cases}  leads to a dichotomy and, in particular,  provides a complete characterization of harmonic maps with finite uniton number:

\begin{corollary}\label{U(2)} Let $\v:M\to \U(2)$ be  harmonic and non-constant.
Then, \emph{either}
\begin{enumerate}	
\item[(i)]  at points where $A_z^\v$ is non-zero, $\image{\partial_zA_z^\v}\subset\image{A_z^\v}$.  Then $\v$ has finite uniton number if and only if $(A_z^\v)^2=0;$ \emph{or}
\item[(ii)] $\image{\partial_zA_z^\v}+\image{A_z^\v}=\C^2$ on an open subset of $M$. Then
$\v$ does not have finite uniton number; indeed,
for any extended solution on any open subset of $M$, the corresponding  Gauss sequence satisfies  $W_{(2i)}=\la^{-i}W$.
\end{enumerate}
\end{corollary}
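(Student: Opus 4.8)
The plan is to reduce each alternative to one of the two ``extreme'' results already proved, Corollary \ref{extreme for extended} for (i) and Proposition \ref{extreme cases}(i) for (ii), once I have checked that (i) and (ii) genuinely exhaust all possibilities. Since $\v$ is non-constant we have $A_z^\v\not\equiv0$, and because $A_z^\v$ is holomorphic for the natural complex structure its rank is maximal off a discrete set; by locality of the criteria (Remark \ref{rem:extd local}) I may work on an open set $U$ where $A_z^\v\neq0$ and $\operatorname{rank}A_z^\v$ is constant, equal to $1$ or $2$. On $U$ either $\image{\partial_z A_z^\v}\subset\image{A_z^\v}$, which is alternative (i); or $\image{\partial_z A_z^\v}\not\subset\image{A_z^\v}$, in which case $\operatorname{rank}A_z^\v\neq2$ (rank $2$ would make $\image{A_z^\v}=\C^2$), so $\dim\image{A_z^\v}=1$ and $\image{\partial_z A_z^\v}+\image{A_z^\v}=\C^2$, which is alternative (ii). Thus the two cases are exclusive and exhaustive.

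In case (i) I would apply Corollary \ref{extreme for extended} verbatim: its hypothesis holds on $U$, so $\v$ is of finite uniton number if and only if $A_z^\v$ is nilpotent. The only supplementary remark needed is the elementary fact that a $2\times2$ matrix $N$ is nilpotent if and only if $N^2=0$ (nilpotency forces characteristic polynomial $\lambda^2$, hence $N^2=0$ by Cayley--Hamilton; the converse is trivial). This rewrites the criterion as $(A_z^\v)^2=0$, and in particular covers the rank $2$ situation, where $A_z^\v$ is invertible, hence not nilpotent, hence not of finite uniton number.

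In case (ii) I would take a local extended solution $\psi$ of $\v$, so that $A_\psi=(1-\la^{-1})A_z^\v$ and hence $a_{-1}=-A_z^\v$, $a_0=A_z^\v$, and feed this into Proposition \ref{extreme cases}(i). Granting $(A_z^\v)^2=0$, one gets $a_{-1}^2=0$ and $a_0a_{-1}+\partial_z a_{-1}=-(A_z^\v)^2-\partial_z A_z^\v=-\partial_z A_z^\v$, so the image hypothesis of the proposition reads exactly $\image{\partial_z A_z^\v}+\image{A_z^\v}=\C^2$, the case (ii) assumption. The proposition then delivers $W_{(2i)}=\la^{-i}W$ for all $i$, the Gauss sequence never stabilises, and Theorem \ref{finite uniton}(iii) shows $\v$ is not of finite uniton number.

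The main obstacle is exactly the hypothesis $(A_z^\v)^2=0$ that launches Proposition \ref{extreme cases}(i) in case (ii); as $\operatorname{rank}A_z^\v=1$ there, it amounts to $\operatorname{tr}A_z^\v=0$. I would approach it through the determinant: $\det\v:M\to S^1$ is again harmonic and, by Jacobi's formula, $(\det\v)^{-1}\partial_z\det\v=\operatorname{tr}(\v^{-1}\partial_z\v)=2\operatorname{tr}A_z^\v$. Since each uniton factor $\pi_\a-\pi_\a^\perp$ has constant determinant $(-1)^{\dim\a^\perp}$, a finite uniton map into $\U(2)$ has constant determinant and therefore trace-free $A_z^\v$. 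This already yields the non-finiteness conclusion cleanly and unconditionally: were $\v$ of finite uniton number, then $A_z^\v$ would be nilpotent (Corollary \ref{a-1 nilpotent}), so $(A_z^\v)^2=0$, and Proposition \ref{extreme cases}(i) would force the Gauss sequence to grow, contradicting Theorem \ref{finite uniton}(iii). The sharper assertion $W_{(2i)}=\la^{-i}W$, however, requires $(A_z^\v)^2=0$ outright rather than under a finite-uniton hypothesis; establishing that case (ii) forces $A_z^\v$ to be nilpotent (equivalently $\det\v$ locally constant), presumably by combining the holomorphicity of $\operatorname{tr}A_z^\v$ and $\det A_z^\v$ with the non-containment condition, is the delicate point I expect to require the most care.
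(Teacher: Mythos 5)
Your proposal follows the same route as the paper: the paper's proof is precisely the reduction you describe --- it notes that a $2\times2$ matrix is nilpotent if and only if its square vanishes, observes that the set where $\image{\partial_zA_z^\v}+\image{A_z^\v}=\C^2$ is open, sends the case where this set is void to Corollary \ref{extreme for extended} (your case (i), with the same rank dichotomy making the alternatives exhaustive), and otherwise invokes Proposition \ref{extreme cases}(i) with $a_{-1}=-A_z^\v$, $a_0=A_z^\v$, exactly as you do.

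The ``delicate point'' you isolate is genuine, and it is worth knowing that the paper's own proof passes over it in silence: Proposition \ref{extreme cases}(i) requires $a_{-1}^2=(A_z^\v)^2=0$, and the paper never verifies this in case (ii). (The other hypothesis is automatic: since $\image{(A_z^\v)^2}\subseteq\image{A_z^\v}$, one always has $\image{\bigl((A_z^\v)^2+\partial_zA_z^\v\bigr)}+\image{A_z^\v}=\image{\partial_zA_z^\v}+\image{A_z^\v}$, so nilpotency is the only missing ingredient.) Your contrapositive --- finite uniton number forces $A_z^\v$ nilpotent by Corollary \ref{a-1 nilpotent}, whence Proposition \ref{extreme cases}(i) applies and contradicts Theorem \ref{finite uniton}(iii) --- is a correct, self-contained justification of the non-finiteness assertion of (ii), and is more than the paper writes down. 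For the sharper claim $W_{(2i)}=\la^{-i}W$ you are right that nilpotency is needed outright and that it does not follow from the displayed hypothesis of (ii) alone: for instance, the vacuum solution with $A_z^\v=\tfrac12 I$ satisfies $\image{\partial_zA_z^\v}+\image{A_z^\v}=\C^2$ everywhere, yet has $W_{(1)}=\la^{-1}W$ and hence $W_{(2i)}=\la^{-2i}W\ne\la^{-i}W$. So the two alternatives must be read with (i) taking precedence (that example falls under (i), where the containment is trivial because $A_z^\v$ has rank two), and in the true complement of (i) --- rank one at some point with the containment failing --- one must still exclude $\mathrm{tr}\,A_z^\v\ne0$ there to obtain the refinement. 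Neither your proposal nor the paper's proof supplies this last step; on this point your write-up is, if anything, more careful than the original.
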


\begin{proof}
Clearly, $A_z^\v$ is nilpotent if and only if $(A_z^\v)^2 = 0$.
	The set where $\image{\partial_zA_z^\v}+\image{A_z^\v}=\C^2$
is open in $M$. If this set is void, we have $\image{\partial_zA_z^\v}\subset\image{A_z^\v}$ at points where $A_z^\v$ is non-zero and (i) follows from the previous corollary.  Otherwise, we obtain (ii) by part (i) of Proposition \ref{extreme cases}.
		\end{proof}

The condition for finiteness of the uniton number is easy to understand in this particular case.
 If $A_z^\v$ is nilpotent, then it has the form $$A_z^\v= \begin{pmatrix} a & b \\ c & -a  \end{pmatrix},$$
with $a^2+bc=0$. A straightforward computation
reveals  that  $\image{\partial_zA_z^\varphi}\subset \image{A_z^\v}$  holds if and only if $\partial_za=0$ whenever $a=0$, and on the open set where $a\ne 0$, both functions
$b/a$, $c/a = -a/b$ are antiholomorphic.

To interpret this, let $g:M \to \CP^1$ be the map into complex projective $1$-space given by the image of $A^{\varphi}_z$, thus $g(z) = [a(z),c(z)] = [b(z),-a(z)]$ in homogeneous coordinates; note that $g$ is antiholomorphic. Let $f:M \to \CP^1$ be the holomorphic map orthogonal to it, i.e.,
$f(z) = [-\ov{c(z)}, \ov{a(z)}] = [\ov{a(z)}, \ov{b(z)}]$.  Then $A^f_z$ has image and kernel given by $g$, i.e., the same image and kernel as $A^{\varphi}_z$. In fact, $A^{\varphi}_z = A^f_z$. To see this, set $s = \bigl(1, \ov{b(z)}/\ov{a(z)}\bigr)$, a section of $f$. By matrix multiplication, $A^{\varphi}_z(s) = (a+b\ov{b}/\ov{a}, c-a\ov{b}/\ov{a})$.  On the other hand, adding equations \eqref{harmcond} and \eqref{integrability} gives $\pa_{\bar{z}}A^{\v}_z = [A^{\v}_z, A^{\v}_{\zbar}]$.  Now $A^{\v}_{\zbar}$ is minus the adjoint of $A^{\v}_z$, thus
$A_{\zbar}^\v= \begin{pmatrix} -\ov{a} & -\ov{c} \\ -\ov{b} & \ov{a} \end{pmatrix}$ so that the last equation gives
$\pa_z \ov{a} = c\ov{c} - b\ov{b}$ and $\pa_z \ov{b} = 2(a\ov{b} - \ov{a}c)$.  Using these equations and $a^2 +bc=0$, it can be checked that $A^f_z(s)
 := \pi_g(\pa_z s) = A^{\varphi}_z(s)$.  It follows that $A^f_z = A^{\varphi}_z$ so that, up to premultiplication by a constant unitary matrix, $\v$ is the holomorphic map $f$. That $\varphi$ is holomorphic also follows from Uhlenbeck's work \cite{uhlenbeck}.

\subsection{Constant potentials} \label{subsec:constant potentials}
 Here we consider an interesting  particular case of the  Dorfmeister--Pedit--Wu construction \cite{DPW}. We shall be concerned with extended solutions
$W(z)=\psi(\cdot, z)\H_+$  where $\psi:S^1\times M\to \GL(n,\C)$ is a smooth map as in \eqref{W-psi}, but now we assume that the potential   $A_\psi=\psi^{-1}(\la,z)\partial_z\psi(\la,z)$ is constant in $z\in M$, i.e., there exists a smooth $\gl(n,\C)$-valued map $p$  on $S^1$, constant in $z\in M$, such that the negative Fourier coefficients of $\la p(\la)$  vanish, and
\begin{equation}\label{const-potential}
A_{\psi}(\la,z) =p(\la), \quad \la \in S^1;\end{equation}
here we take $M$ to be an open subset of $\C$ so that we have a global coordinate $z$.
 The simplest examples are the so-called \emph{vacuum solutions}, see Example \ref{ex:Clifford}; they are given by setting $\tau(z)$ equal to
$z$, or a linear polynomial in $z$, in the formula
\begin{eqnarray}
\Phi(\la,z) &=& \exp\bigl(2i\,{\mathcal Im}\,\{\tau(z)(1-\la^{-1})A\}\bigr) \label{vacuum} \\
		&=& \exp\bigl(\tau(z)(1-\la^{-1})A-\overline{\tau(z)}(1-\la)A^*\bigr),\nonumber
\end{eqnarray}
where $A$ is a non-zero constant matrix which is normal ($AA^*=A^*A$).  More generally,
any non-compact Riemann surface $M$ admits non-constant holomorphic functions $\tau:M\to\C$, see,  for example,
\cite[Ch.~3]{forster}, and so we obtain vacuum solutions on $M$.
 Such extended solutions are not of finite uniton number by Corollary \ref{a-1 nilpotent}, since a non-zero normal matrix cannot be nilpotent. In particular, this shows that \emph{on any non-compact Riemann surface $M$ there are harmonic maps into $\U(n)$ which are not of finite uniton number}.

We now extend to $\U(n)$ a result proved for $\SU(2)$ in \cite[Proposition A.4]{burstall-pedit}.

\begin{theorem} \label{constant potentials}  Let $W=\psi\H_+$ be an extended solution with $A_{\psi} = \psi^{-1}\partial_z\psi$ given by \eqref{const-potential}. Then $W$ has finite uniton number if and only if $\det(\mu I-\psi^{-1}\partial_z\psi)$ is holomorphic in $\{(\la,\mu)\in \C^2:~|\la|<1\}$.
\end{theorem}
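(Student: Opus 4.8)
The key is to connect the finite uniton number criterion from Theorem \ref{finite uniton}(iv) with the spectral data of the constant matrix-valued potential $p(\la) = \psi^{-1}\partial_z\psi$. Since the potential is constant in $z$, the operator $T_\psi(\la)u = \partial_z u + p(\la)u$ has constant coefficients in $z$, which makes its iterates computable. The plan is to reduce the boundedness condition on the powers of $\la^{-1}$ in $T^r_\psi(\la)$ to a condition on the resolvent, equivalently the characteristic polynomial, of $p(\la)$.

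\textbf{First step: reduce to the eigenvalues of $p(\la)$.} First I would apply Theorem \ref{finite uniton}(iv): $W$ has finite uniton number if and only if the maximum power of $\la^{-1}$ occurring in $T^r_\psi(\la)$ stays bounded as $r \to \infty$. Because $p(\la) = \sum_{j=-1}^\infty a_j \la^j$ has a simple pole at $\la = 0$ with residue $a_{-1}$, and since $T^r_\psi(\la)$ applied to a suitable test function produces (among lower-order terms) contributions governed by products of the $a_j$, I expect the highest negative power of $\la$ in $T^r_\psi(\la)$ to be controlled by the growth of $p(\la)^r$ as $\la \to 0$. Concretely, the leading singular behaviour of $p(\la)^r$ is $\la^{-r} a_{-1}^r$, but this is not the whole story: the relevant quantity is the full Laurent expansion of $p(\la)^r$ in $\la^{-1}$. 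The boundedness of the maximal power of $\la^{-1}$ across all $r$ is precisely the statement that the ``$\la^{-1}$-degree'' of $p(\la)^r$ does not grow with $r$.

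\textbf{Second step: pass to the characteristic polynomial.} The natural invariant encoding how large negative powers of $\la$ appear in all powers $p(\la)^r$ is the set of eigenvalues $\mu(\la)$ of $p(\la)$, i.e. the roots of $\det(\mu I - p(\la)) = 0$. If every eigenvalue $\mu(\la)$ remains bounded (indeed holomorphic) as $\la \to 0$ inside the disc, then the powers $p(\la)^r$ cannot develop unboundedly negative powers of $\la$; conversely, if some eigenvalue has a genuine pole at $\la=0$, then $p(\la)^r$ will have $\la^{-1}$-degree growing linearly in $r$. The cleanest way to phrase this is: the maximal power of $\la^{-1}$ in $T^r_\psi(\la)$ stays bounded for all $r$ if and only if all the eigenvalues of $p(\la)$ extend holomorphically across $\la=0$ into $\{|\la|<1\}$, which is exactly the condition that $\det(\mu I - \psi^{-1}\partial_z\psi)$ be holomorphic in $\{(\la,\mu)\in\C^2:\ |\la|<1\}$.

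\textbf{Main obstacle.} The hard part will be the careful two-sided argument linking the $\la^{-1}$-degree of the iterated operator $T^r_\psi(\la)$ to the analyticity of the eigenvalues, since eigenvalues of a matrix depending on $\la$ need not be holomorphic or even single-valued (branch points), and controlling $p(\la)^r$ by its spectrum requires care when $p(\la)$ is non-diagonalizable. I would handle this by invoking the Cayley--Hamilton theorem to express $p(\la)^r$ as a polynomial in $p(\la)$ of degree $<n$ with coefficients that are symmetric functions of the eigenvalues (hence rational in the coefficients of the characteristic polynomial, avoiding branch issues), and then tracking the order of the pole at $\la=0$ of these coefficient functions. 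The holomorphy of $\det(\mu I - p(\la))$ in $\la$ on the disc guarantees the coefficients of the characteristic polynomial are holomorphic there, bounding the pole orders uniformly in $r$; the converse direction uses that an actual pole in some eigenvalue forces $\mathrm{tr}\,p(\la)^r$, hence $T^r_\psi(\la)$, to have $\la^{-1}$-degree tending to infinity. Throughout I would use Remark \ref{rem:extd local} to pass freely between the open-set condition and the global one.
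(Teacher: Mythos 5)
Your proposal follows the paper's architecture for most of the proof. Like the paper, you reduce via Theorem \ref{finite uniton}(iv), exploiting constancy of $p(\la)=\psi^{-1}\partial_z\psi$ in $z$ (the paper makes this explicit through the binomial formula $T^r_\psi(\la)h=\sum_{j=0}^r\binom{r}{j}p^j\partial_z^{r-j}h$ and the resulting description \eqref{w-const-pot} of $W_{(i)}$), so that finiteness of the uniton number becomes boundedness in $j$ of the pole order at $\la=0$ of the powers $p(\la)^j$. Your proof that holomorphy of $\det(\mu I-p)$ implies this boundedness is exactly the paper's: Cayley--Hamilton reduces $p^k$, $k\ge n$, to a polynomial of degree $<n$ in $p$ whose coefficients are polynomial in the (holomorphic) coefficients of the characteristic polynomial $Q_\la$, so the pole order never exceeds that of the powers $p^j$, $j<n$.

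Where you genuinely diverge is the necessity direction, and there your sketch has a real, though fixable, subtlety. The paper avoids eigenvalues and Puiseux expansions altogether by a determinant trick: once finite uniton number gives a uniform bound $m$ on the pole order of $Q(p)$ for \emph{every} polynomial $Q$, the entries of $(\mu I-p)^k$ have pole order at most $m$ independently of $k$, so $\det\bigl((\mu I-p)^k\bigr)=\det^k(\mu I-p)$ has pole order at most $mn$ for all $k$, which is impossible if $\det(\mu I-p)$ itself has a pole at $\la=0$. Your route via $\operatorname{tr}p(\la)^r$ can be made to work, but the step ``a pole in some eigenvalue forces the $\la^{-1}$-degree of $\operatorname{tr}p(\la)^r$ to grow'' is not true for every $r$ as stated: the coefficient of the leading singular term is the power sum $\sum_i c_i^{\,r}$ of the leading Puiseux coefficients of the most singular branches, and this can vanish for particular $r$ (e.g.\ $c_1=1$, $c_2=-1$ kills all odd $r$). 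One must argue, via Newton's identities or the nonvanishing of the rational generating function $\sum_i c_i t/(1-c_i t)$, restricted to exponents $r$ making the Puiseux order an integer, that the power sum is nonzero for infinitely many $r$; this yields pole orders growing linearly along a subsequence, which suffices for Theorem \ref{finite uniton}(iv). So your plan is correct in outline, with half of it coinciding with the paper; the paper's $\det^k$ argument is the cleaner device for precisely this cancellation issue, buying freedom from any Puiseux-series bookkeeping, while your trace argument is spectrally more transparent once the missing nonvanishing lemma is supplied.
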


\begin{proof}
If $p=\psi^{-1}\partial_z\psi$ is constant in $z\in M$, then $$T^r_\psi(\la)h=\sum_{j=0}^r{r\choose j}p^j\partial_z^{r-j}h,\quad h\in C^\infty(M,\C^n);$$
hence by  Lemma \ref{T_psi} it follows immediately that
\begin{equation}\label{w-const-pot} W_{(i)}(z)=\psi(\cdot, z)\bigl\{\sum_{j=0}^ip^jh_j(z)+v:~h_j\in  C^\infty(M,\C^n), 0\le j\le i, v\in \H_+ \bigr\}.\end{equation}
If $W$ has finite uniton number it follows from Theorem \ref{finite uniton}(iv) that there exists a nonnegative integer $m$ such that
given any polynomial $Q$, the function $Q(p)$ is holomorphic in $\{|\la|<1\}\setminus \{0\}$ with a pole of order at most $m$ at the
 origin.
 In particular, for $\mu\in \C$ and all positive integers $k$,  the function $\det \bigl((\mu I-p)^k \bigr)=\det^k(\mu I-p)$    is holomorphic in $\{|\la|<1\}\setminus \{0\}$ with a pole of order at most $mn$ at the origin. This clearly implies that $\det(\mu I-p)$ actually extends analytically at the origin.\\
Conversely, assume that  $\det(\mu I-p)$ is  holomorphic in $\{|\la|<1\}\times \C$.  Note that, for fixed $\la\in S^1$, the polynomial $Q_\la(\mu)=\det(\mu I-p(\la))$ is the characteristic polynomial of $p(\la)$ and has dominant coefficient equal to $1$. Therefore, it follows easily by induction that for $k\ge n$ we have $p^k(\la)=R_k(\la,p(\la))$, where $R_k(\la,\cdot)$ is a polynomial of degree strictly less than $n$  whose coefficients extend analytically  to the unit disc.
 In fact, $$R_n(\la,\mu)=\mu^n-Q_\la(\mu), \quad R_{k+1}(\la, \mu)=\mu R_k(\la, \mu)-Q_\la(\mu).$$
Then for $i>n$, and $h_j\in  C^\infty(M,\C^n),~ 0\le j\le i$,
$$\sum_{j=0}^ip^j(\la)h_j(z)=\sum_{j=0}^{n-1}p^j(\la)h_j(z)+\sum_{j=n}^{i}R_j(\la,p(\la))h_j(z); $$
hence  the maximum power of $\la^{-1}$ in these sums stays bounded when $i>n$. Then by  \eqref{w-const-pot} and Theorem \ref{finite uniton}(iv), $W$ is of finite uniton number.
\end{proof}

\subsection{Further applications and examples} \label{subsec:further}
In this section we shall interpret the bounded powers criterion, Theorem \ref{finite uniton}, and give some applications to harmonic maps into Grassmannians. Our treatment will be more geometrical than previous sections, and will use some constructions of harmonic maps found in the literature.  Let $W=\Phi\H_+$, with  $\Phi$ an extended solution associated to some harmonic map $\varphi:M \to \U(n)$ from a Riemann surface.
We think of $W$ as a subbundle of the trivial bundle $\HH = M \times L^2(S^1,\C^n)$ whose fibre at $z \in M$ is $W(z) = \Phi(z)\H_+$.  We shall also need the trivial bundle $\HH_+ := M \times \H_+$; this is a subbundle of $\HH$.
Setting $\psi = \Phi$, as mentioned in \S \ref{subsec:applns ha maps}
the differential operator $T=T_{\v} = T_{\psi}$ defined by \eqref{def-T} can be written:
\begin{equation} \label{T}
T =\pa + (1-\lambda^{-1})A^{\varphi}_z = -\lambda^{-1}A^{\varphi}_z + D^{\varphi}_z\,.
\end{equation}
Here $A^{\varphi}_z$ is defined by \eqref{Aphi}
and $D^{\varphi}_z$ is the derivation given by $D^{\varphi}_z = \pa_z + A^{\varphi}_z$;  similarly, we
set $D^{\varphi}_{\zbar} = \pa_{\zbar} + A^{\varphi}_{\zbar}$.  The $1$-form $D^{\varphi}_z dz + D^{\varphi}_{\zbar}d\zbar$ may be interpreted as a \emph{unitary connection} giving a \emph{covariant derivative} on the trivial bundle $\CC^n:= M \times \C^n$. As in the proof of Corollary \ref{U(2)},  adding the harmonic equation \eqref{harmcond} and the integrability equation \eqref{integrability} gives $\pa_{\zbar}A^{\v}_z = [A^{\v}_z, A^{\v}_{\zbar}]$.
This  can be written as
$D^{\varphi}_{\zbar}A^{\varphi}_z = A^{\varphi}_zD^{\varphi}_{\zbar}$ \cite[\S 1]{uhlenbeck}, which can be interpreted as saying that
\emph{$A^{\varphi}_z$ is a holomorphic endomorphism of\/ $\CC^n$ with respect to the holomorphic structure on $\CC^n$ with $\dbar$-operator given by $D^{\varphi}_{\zbar}$}, see, for example,
\cite[\S 3.1]{wood-60}.
We extend these operators to (local) sections $\sigma$ of $\HH$\,, thought of as mappings $S^1 \times M \to \C^n$, by applying them to $\sigma(\la,\cdot)$ for each $\la$, for example,
$\bigl(D^{\varphi}_z(\sigma)\bigr)(\la, \cdot) = D^{\varphi}_z\bigl(\sigma(\la,\cdot)\bigr)$.

By \cite[Proposition 3.9]{svensson-wood-unitons}, there is a commutative diagram:
\begin{equation}
\begin{gathered}\label{diag:F-Az-comm}
\xymatrixrowsep{1.5pc}\xymatrixcolsep{1.3pc}
\xymatrix{
	\Gamma(W)  \ar[d]_{\Phi^{-1}}  \ar[r]^(0.48){\pa_z}  &  \Gamma(W_{(1)})\ar[d]_{\Phi^{-1}} \ar[r]^{\pa_z} &  \Gamma(W_{(2)})\ar[d]_{\Phi^{-1}}   \ar[r]^(0.65){\pa_z} &  \cdots \\
	\Gamma(\HH_+)                 \ar[r]_(0.4){T}   &  \Gamma(\la^{-1}\HH_+)               \ar[r]_{T}   &  \Gamma(\la^{-2}\HH_+)     \ar[r]_(0.7)T & \cdots
}
\end{gathered}
\end{equation}
thus $\Phi^{-1}$ intertwines $\pa_z$ and $T$.  Similarly,
$\Phi^{-1}$ intertwines $\pa_{\zbar}$ and $\ov{T} := \pa_{\zbar} + (1-\lambda)A^{\varphi}_{\zbar}$ so that
$T$ is holomorphic in the sense that it commutes with $\ov{T}$ --- this can also be checked directly from the harmonic equation.

 \begin{remark}\label{dmodules}
{\rm  The commutativity of diagram \eqref{diag:F-Az-comm} reflects the fact that $\Gamma(W)$ and $\Gamma(\HH_+)$ are isomorphic as \emph{$D$-modules} (see \cite[\S 8.2]{guest-book2}).
 In view of  the harmonicity conditions \eqref{propW}, the appropriate  structure of $D$-module to be considered on $\Gamma(W)$ is that induced by the differential operators $\lambda \pa_z$ and  $ \pa_{\bar z}$ (see \cite[Example 8.5]{guest-book2}). The \emph{gauge transformation} $\Phi^{-1}$ converts these differential operators acting on
 $\Gamma(W)$ in the operators $\lambda T$ and $\overline{T}$, respectively, acting on   $\Gamma(\HH_+)$, exhibiting then  $\Gamma(W)$ and $\Gamma(\HH_+)$ as isomorphic $D$-modules.}
  \end{remark}

In \eqref{diag:F-Az-comm}, $W_{(i)}$ denotes the $i$th osculating subbundle of $W$ defined inductively by \eqref{Gauss}.  Note that, although the rank may drop at isolated points,
$W_{(i)}$ can be extended to a subbundle of $\HH$ as explained in \S \ref{subsec:the Gauss sequence}.
In similar fashion to \eqref{Gauss},  define $\Ss_i = \Ss_i(T)$ inductively by $\Ss_0 = \HH_+$, $\Ss_i = T(\Ss_{i-1}) + \Ss_{i-1}$, \ $i = 1,2,\ldots$.
By the commutativity of the diagram \eqref{diag:F-Az-comm}, $\Ss_i = \Phi^{-1}W_{(i)}$ so that $W_{(i)} = \Phi \Ss_i$, cf.\ Lemma \ref{T_psi}.
Then, if we extend $W_{(i)}$ to a subbundle, we can correspondingly extend $\Ss_i$ to the subbundle  $\Phi^{-1}W_{(i)}$, so that $W_{(i)} = \Phi \Ss_i$ \emph{as smooth subbundles}.

Hence, Corollary \ref{cor:T criterion for ha map} becomes the following:
\begin{corollary} \label{S-stabilizes} Let $\v:M \to \U(n)$ be a harmonic map, Then the following are equivalent$:$
\begin{enumerate}
\item[(i)]  $\v$ has finite uniton number$;$
\item[(ii)] $\Ss_i(T)$ stabilizes, i.e., there exists $i \in \N$ such that $\Ss_i(T) = \Ss_{i+1}(T);$
\item[(iii)] the maximum power of $\la^{-1}$ occurring in $\Ss_i(T)$ is bounded for $i \in \N$.
\end{enumerate}
\end{corollary}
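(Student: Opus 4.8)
The plan is to read this corollary as the geometric restatement of Corollary \ref{cor:T criterion for ha map} in terms of the subbundles $\Ss_i(T)$, so that no new analysis is required: everything reduces to the identification $\Ss_i = \Phi^{-1}W_{(i)}$ recorded above together with the fact that $T$ commutes with multiplication by $\lambda$. Indeed, since $\lambda$ is independent of $z$, one has $T(\lambda f)=\lambda Tf$, and hence $\Ss_i=\sum_{r=0}^i T^r\H_+$ with $T^r\H_+\subset\lambda^{-r}\H_+$. I would prove the cycle (i) $\Rightarrow$ (ii) $\Rightarrow$ (iii) $\Rightarrow$ (i).

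For (i) $\Rightarrow$ (ii), work on an open set where $\v$ admits an extended solution $\Phi$; by Theorem \ref{finite uniton}, finiteness of the uniton number forces the Gauss sequence $W_{(i)}$ to stabilize, and since multiplication by $\Phi^{-1}$ is an invertible bundle map, $\Ss_i=\Phi^{-1}W_{(i)}$ stabilizes precisely when $W_{(i)}$ does. The implication (ii) $\Rightarrow$ (iii) is immediate: if $\Ss_i=\Ss_{i+1}$, then $\Ss_j=\Ss_i$ for all $j\ge i$, so only the finitely many bundles $\Ss_0,\ldots,\Ss_i$ occur, each containing only finitely many negative powers of $\lambda$ (since $\Ss_j\subset\lambda^{-j}\H_+$).

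The key step is (iii) $\Rightarrow$ (i). Using the commutation $T(\lambda f)=\lambda Tf$, I would argue that the maximum power of $\lambda^{-1}$ occurring in $\Ss_i=\sum_{r=0}^i T^r\H_+$ is attained on the $\lambda$-independent (constant) sections $u\in C^\infty(M,\C^n)$: writing a section of $\HH_+$ as $\sum_{k\ge 0}c_k(z)\lambda^k$, one has $T^r(\lambda^k c_k)=\lambda^k T^r c_k\in\lambda^{k-r}\H_+$, so multiplying by a nonnegative power of $\lambda$ only shifts the Fourier support upward and never increases the count of negative powers. Consequently the boundedness in (iii) is exactly the boundedness of the maximum power of $\lambda^{-1}$ in $T^r u$ over $r\in\N$, i.e.\ the hypothesis of Corollary \ref{cor:T criterion for ha map}, whence $\v$ has finite uniton number.

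The point requiring care is that $\v$ need not possess an extended solution on all of $M$, so the identification $\Ss_i=\Phi^{-1}W_{(i)}$ and the appeal to Theorem \ref{finite uniton} are only valid locally. One therefore passes between an open set and the whole of $M$ using the real-analyticity localization established in Remark \ref{rem:extd local} and Corollary \ref{cor:fi uniton no local}; once this is in place, stabilization of $\Ss_i$ and boundedness of its negative powers become genuinely global statements, and the three conditions are equivalent.
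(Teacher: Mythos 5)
Your proposal is correct and takes essentially the same route as the paper, which presents this corollary as an immediate translation of Corollary \ref{cor:T criterion for ha map} (and hence of Theorem \ref{finite uniton}(i), (iii), (iv)) under the gauge identification $\Ss_i(T)=\Phi^{-1}W_{(i)}$, with locality handled exactly as you do via Remark \ref{rem:extd local} and Corollary \ref{cor:fi uniton no local}. Your explicit reduction to $\la$-independent sections using $T(\la f)=\la Tf$ is simply the content of Lemma \ref{T_psi}, which the paper invokes implicitly.
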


We now discuss \emph{harmonic maps into Grassmannians} and geometrical methods for studying them.
For $k \in \{0,1,\ldots, n\}$, let $G_k(\C^n)$ denote the Grassmannian of $k$-dimensional subspaces of $\C^n$. This can be embedded in $\U(n)$ by the
\emph{Cartan embedding} \cite[Proposition 3.42]{cheeger-ebin}, which is given, up to left-multiplication by a constant, by $\alpha \mapsto \pi_{\alpha} - \pi_{\alpha}^{\perp}$.  Since the Cartan embedding is totally geodesic,
by the composition law \cite[\S 5]{eells-sampson}
it preserves harmonicity, i.e.,
\emph{a smooth map into $G_k(\C^n)$ is harmonic if and only if its composition with the Cartan embedding is harmonic into $\U(n)$}.

For smooth maps $\varphi:M \to G_k(\C^n)$ into a Grassmannian, the operators above are particularly easy to understand.  Note that we can consider such a map as a subbundle, still denoted by $\varphi$, of the trivial bundle $\CC^n$, namely that with fibre at $z \in M$ given by the $k$-dimensional subspace $\varphi(z)$.
Then a short calculation shows that
\begin{equation} \label{second-f-f}
\left\{ \begin{array}{rclcrcll}
A^{\varphi}_z(s)\!\!  &=&\!\! - \pi_{\varphi}^{\perp} \partial_z s & \text{and}  &
D^{\varphi}_z(s)\!\!  &=&\!\!  \pi_{\varphi} \partial_z s, & s \in \Gamma(\varphi), \\
A^{\varphi}_z(s)\!\!  &=&\!\! - \pi_{\varphi} \partial_z s & \text{and} &
D^{\varphi}_z(s)\!\!  &=&\!\!  \pi_{\varphi}^{\perp} \partial_z s, & s \in \Gamma(\varphi^{\perp}).
\end{array} \right.
\end{equation}
Note that $A^{\varphi}_z$ is tensorial (i.e., $A^{\varphi}_z(fs) = fA^{\varphi}_z(s)$ for any smooth function $f$ on $M$), and it maps the subbundle $\varphi$ to its orthogonal complement
$\varphi^{\perp}$ and vice-versa.
In contrast, $D^{\varphi}_z$ acts on sections of $\varphi$ and $\varphi^{\perp}$ as a derivation.
We can give similar formulae for $A^{\varphi}_{\zbar}$ and $D^{\varphi}_{\zbar}$.

Given a harmonic map $\varphi:M \to G_k(\C^n)$, we define its \emph{($\partial'$-)Gauss bundle} or
\emph{Gauss transform}
$G'(\varphi):M \to G_t(\C^n)$ by $G'(\varphi) = \image(A^{\varphi}_z|\varphi)$.
The right-hand side is of some constant rank $t \in \{0,1,\ldots, k\}$ except at some isolated points of $M$: holomorphicity of
$A^{\varphi}_z$ again allows us to fill out zeros giving a rank $t$ subbundle of $\CC^n$, equivalently, a map $G'(\varphi):M \to G_t(\C^n)$.
  \emph{Then $G'(\varphi)$ is also harmonic}, in fact $G'(\varphi)$ is obtained from $\varphi$ by adding the uniton $\varphi + G'(\varphi)$
(cf.\ \eqref{add-uniton}), see \cite[\S 2B]{burstall-wood}.
We iterate this construction to obtain \emph{higher ($\partial'$-)Gauss bundles}
$G^{(i)}(\varphi)$ \ $(i =0,1,\ldots)$ by setting $G^{(0)}(\varphi) = \varphi$, $G^{(1)}(\varphi) = G'(\varphi)$, $G^{(i+1)}(\varphi) = G'(G^{(i)}(\varphi))$.
See \cite{chern-wolfson} for a moving frames approach.

Similarly, we define the \emph{$\partial''$-Gauss bundle} of $\varphi$ by
$G''(\varphi) = \image(A^{\varphi}_{\zbar}|\varphi)$, and iterate this to obtain $G^{(-1)}(\varphi) = G''(\varphi)$, $G^{(-i-1)}(\varphi) = G''(G^{(-i)}(\varphi))$ \ $(i =0,1,\ldots)$, so that $G^{(i)}(\varphi)$ is defined for all $i \in \Z$.  Note that $G'(G''(\varphi) \subset \varphi$ and $G''(G'(\varphi)) \subset \varphi$, often with equality, see
 \cite[Proposition 2.3]{burstall-wood}.  The sequence of Gauss bundles $G^{(i)}(\varphi)$ \ $(i \in \Z)$ is called the
\emph{harmonic sequence of $\varphi$}.

We define the \emph{isotropy order} of a harmonic map $\varphi:M \to G_k(\C^n)$ to be the greatest value of $t \in \{1,2,\ldots,\infty\}$ such that $\varphi$ is orthogonal to $G^{(i)}(\varphi)$ for all $i$ with $1 \leq i \leq t$;
 since $G^{(1)}(\varphi)$ is orthogonal to $\varphi$, there is always such a $t$.  Equivalently \cite[Lemma 3.1]{burstall-wood}, the isotropy order is the greatest value of $t$ such that $G^{(i)}(\varphi)$ and $G^{(j)}(\varphi)$ are orthogonal for $i,j \in \Z$, $1 \leq |i-j| \leq t$.
This divides the harmonic maps into two classes: if $t$ is finite, we say that $\varphi$ is \emph{of finite isotropy order}.
If $t$ is infinite,  i.e., all the Gauss bundles $G^{(i)}(\varphi)$ \ $(i \in \N)$ are mutually orthogonal, we say that $\varphi$ has \emph{infinite isotropy order} or is \emph{strongly isotropic}; this is a particularly nice class of harmonic maps as the following result shows, which also follows from \cite{erdem-wood}.

\begin{proposition} \label{prop:str-iso} All strongly isotropic harmonic maps $\varphi:M \to G_k(\C^n)$ are of finite uniton number.
\end{proposition}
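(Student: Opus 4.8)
The plan is to verify criterion (iii) of Corollary \ref{S-stabilizes}: I will show that, under strong isotropy, the power of $\la^{-1}$ occurring in $T^r$ stays bounded as $r$ ranges over $\N$. Recall from \eqref{T} that $T = -\la^{-1}A^{\varphi}_z + D^{\varphi}_z$, so each power $T^r$ is a sum of monomials in the two operators $A^{\varphi}_z$ and $D^{\varphi}_z$, and a monomial contributes the power $\la^{-k}$ precisely when it contains $k$ factors of $A^{\varphi}_z$. Hence it suffices to exhibit an integer $N$ such that every monomial in $A^{\varphi}_z, D^{\varphi}_z$ containing more than $N$ factors of $A^{\varphi}_z$ vanishes identically. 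I would obtain such an $N$ from a grading of $\CC^n$ by the harmonic sequence.

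First I would record what strong isotropy buys us. Since the Gauss bundles $G^{(i)}(\varphi)$ are mutually orthogonal subbundles of the rank-$n$ bundle $\CC^n$, only finitely many of them are non-zero, say $G^{(i)}(\varphi) = 0$ for $i \notin [i_-, i_+]$; set $N = i_+ - i_-$. Mutual orthogonality also forces the harmonic sequence to be \emph{bidiagonal}: $\partial_z \Gamma(G^{(i)}(\varphi)) \subseteq \Gamma(G^{(i)}(\varphi) \oplus G^{(i+1)}(\varphi))$ and $\partial_{\zbar}\Gamma(G^{(i)}(\varphi)) \subseteq \Gamma(G^{(i-1)}(\varphi) \oplus G^{(i)}(\varphi))$, cf.\ \cite{burstall-wood}. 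Writing $\CC^n = \bigoplus_{i} G^{(i)}(\varphi) \oplus \Theta$, where $\Theta$ is the parallel summand orthogonal to every Gauss bundle, I assign grade $i$ to $G^{(i)}(\varphi)$. The formulae \eqref{second-f-f} combined with the bidiagonal property then show that $A^{\varphi}_z$ is supported on $G^{(-1)}(\varphi) \oplus G^{(0)}(\varphi) = G^{(-1)}(\varphi)\oplus\varphi$, mapping $G^{(-1)}(\varphi) \to \varphi$ and $\varphi \to G^{(1)}(\varphi)$ and annihilating everything else (in particular $\Theta$); thus $A^{\varphi}_z$ \emph{raises the grade by exactly one wherever it is non-zero}. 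Likewise $D^{\varphi}_z = \partial_z + A^{\varphi}_z$ preserves the splitting $\varphi \oplus \varphi^{\perp}$ and, by the bidiagonal property, never \emph{lowers} the grade.

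With this in hand the counting is immediate. Apply a monomial in $A^{\varphi}_z, D^{\varphi}_z$ to a section homogeneous of grade $g \in [i_-, i_+]$ and read it from right to left: every factor leaves the grade unchanged or increases it, and each \emph{non-zero} occurrence of $A^{\varphi}_z$ increases it by exactly one. Since the grade can never exceed $i_+$, a monomial with more than $N$ factors of $A^{\varphi}_z$ must have one of those factors act as zero, so the monomial vanishes on every homogeneous piece and hence identically. Therefore the coefficient of $\la^{-k}$ in $T^r$ is zero for all $k > N$ and all $r \in \N$, so criterion (iii) of Corollary \ref{S-stabilizes} holds and $\varphi$ is of finite uniton number.

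The main obstacle is the second paragraph: establishing the grading behaviour of $A^{\varphi}_z$ and $D^{\varphi}_z$ rigorously. This rests on the bidiagonal structure of the harmonic sequence, which is the one genuinely non-trivial input (it is exactly here that strong isotropy, rather than mere orthogonality of consecutive bundles, is used, and one must argue the two bidiagonal inclusions together to avoid circularity); I would quote it from \cite{burstall-wood}. A secondary point to check is that the flat summand $\Theta$ is parallel and annihilated by $A^{\varphi}_z$, so that it neither contributes factors of $A^{\varphi}_z$ nor allows the grade to leak out of $\bigoplus_i G^{(i)}(\varphi)$; this follows since $\Theta \perp G^{(-1)}(\varphi)$ together with the description of $A^{\varphi}_z|_{\varphi^{\perp}}$ coming from \eqref{second-f-f}.
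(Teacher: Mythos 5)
Your proof is correct, but it takes a genuinely different route from the paper's. The paper's proof is two lines: mutual orthogonality plus a rank count shows the Gauss bundles vanish for large $i$, so adding the finitely many unitons $\alpha_i$ (each Gauss transform being obtained from the previous map by adding the uniton $\varphi + G'(\varphi)$, per \cite[\S 2B]{burstall-wood}) terminates in a constant map, which immediately yields the explicit polynomial extended solution \eqref{Phi-fact} --- and hence a uniton factorization as a by-product. You instead verify the bounded-powers criterion (strictly speaking what you use is Corollary \ref{cor:T criterion for ha map} rather than Corollary \ref{S-stabilizes}(iii), which is phrased in terms of $\Ss_i(T)$; the two are interchangeable here, and neither requires a global extended solution) by means of a grading of $\CC^n$ by the harmonic sequence. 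Your grading facts all check out: strong isotropy gives mutual orthogonality of all $G^{(i)}(\varphi)$, $i\in\Z$, via the equivalence the paper quotes from \cite[Lemma 3.1]{burstall-wood}; the $\partial_z$-bidiagonality is essentially definitional for $i\ge 0$ and for $i<0$ follows from $G'(G''(\psi))\subseteq\psi$ \cite[Proposition 2.3]{burstall-wood}; the formulae \eqref{second-f-f} then show $A^{\varphi}_z$ is supported on grades $-1,0$ and raises the grade by exactly one, while $D^{\varphi}_z$ never lowers it; and your treatment of $\Theta$ is sound, since for $s\in\Gamma(\Theta)$, $t\in\Gamma(G^{(i)})$ one has $\langle \partial_z s,t\rangle = -\langle s,\partial_{\zbar}t\rangle = 0$ by the $\partial_{\zbar}$-bidiagonality, giving both $A^{\varphi}_z(\Theta)=0$ and $D^{\varphi}_z$-stability of $\Theta$. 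In effect you have rediscovered, in this special case, the paper's later and more general Theorem \ref{th:external}(i): your graded decomposition is exactly the diagram consisting of the chain $G^{(i_-)}\to\cdots\to G^{(i_+)}$ together with the isolated vertex $\Theta$, which has no external cycles, and the paper remarks after Theorem \ref{th:external} that Proposition \ref{prop:str-iso} follows from part (i) of that result. What each approach buys: yours gives a quantitative bound on the $\la^{-1}$-powers (indeed, since $A^{\varphi}_z$ is supported on grades $-1$ and $0$ and the grade never decreases, at most \emph{two} factors of $A^{\varphi}_z$ can act non-trivially in any word, so $\la^{-2}$ already suffices --- sharper than your $N=i_+-i_-$); the paper's gives brevity plus the explicit extended solution and uniton factorization, which your argument does not produce.
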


\begin{proof}  Since they are mutually orthogonal, from dimension considerations the Gauss bundles $G^{(i)}(\varphi)$ must be zero for large enough $i$. Thus, after adding a finite number $\alpha_1,\ldots, \alpha_m$ of unitons, we get a constant harmonic map, i.e.,
$\v(\alpha_1-\alpha_1^\perp)\ldots (\alpha_m-\alpha_m^\perp)$ is constant.  This implies that $\v$ has finite uniton number; indeed, it has associated extended solution
\begin{equation}\label{Phi-fact}
\Phi = (\alpha_m^{\perp}+\lambda\alpha_m)\ldots (\alpha_1^{\perp}+\lambda\alpha_1).
\end{equation}
\end{proof}

Here, note that \eqref{Phi-fact} is a uniton factorization.  In general,
it is easy to see that the inverse of adding a uniton $\alpha$ is adding the uniton $\alpha^{\perp}$.

Note that the converse of this proposition holds for $k=1$, see Theorem \ref{th:CPn}, but is false for $k >1$, see Example \ref{ex:G2C4}.

We now turn to the other case, that of finite isotropy order.   We need some notation from \cite{burstall-wood}:
for two mutually orthogonal subbundles $\varphi$ and $\psi$ of $\CC^n$,
we define the \emph{($\pa'$-)second fundamental form of $\varphi$ in $\varphi \oplus \psi$} by
$A'_{\varphi, \psi}(s) = \pi_{\psi} \circ \pa_z s$, \ $s \in \Gamma(\varphi)$; note that this is tensorial. As a special case, we write $A'_{\varphi} = A'_{\varphi, \varphi^{\perp}}$, then \eqref{second-f-f} shows that
$A^{\varphi}_z|{\varphi} = -A'_{\varphi}$ and $A^{\varphi}_z|{\varphi^{\perp}} = -A'_{\varphi^{\perp}}$.

For a harmonic map $\varphi:M \to G_k(\C^n)$ of finite isotropy order $t$, we define the \emph{first return map} $c(\varphi) = c'_t(\varphi)$ \cite{bahy-wood-G2} by
\begin{eqnarray*} \label{first-return}
	c(\varphi) &=& A'_{G^{(t)}(\varphi), \varphi} \circ
	A'_{G^{(t-1)}(\varphi)} \circ \cdots \circ A'_{G^{(1)}(\varphi)} \circ A'_{\varphi}\\
	&=& \pi_{\varphi} \circ A'_{G^{(t)}(\varphi)} \circ
	A'_{G^{(t-1)}(\varphi)} \circ \cdots \circ A'_{G^{(1)}(\varphi)} \circ A'_{\varphi}.
\end{eqnarray*}

We see that the first return map is the composition of the second fundamental forms in the following diagram
where $R:= \bigl(\sum_{i=0}^{t-1} G^{(i)}(\varphi)\bigr)^{\perp}$ and the curved arrow represents $A'_{R, \varphi}$.

\vspace{2ex}
\begin{equation}%BED-W 2.2
\begin{gathered}\label{diag:first-return}
%\xymatrixrowsep{1.5pc}
\xymatrixcolsep{3pc}
\xymatrix{
	\varphi \ar[r]_(0.4){A'_{\varphi}} & \ G'(\varphi)
	\ar[r]_{A'_{G^{(1)}(\varphi)}} & \cdots \hspace{-4em} &\hspace{-3em} 
\ar[r]_(0.4){ A'_{G^{(t-2)}(\varphi)}} & G^{(t-1)}(\varphi)\ar[r]_(0.7){ A'_{G^{(t-1)}(\varphi)}} & R
\ar@/_1.2pc/[lllll]
}
\end{gathered} \end{equation}
Further, the maps $A'_{\varphi}, A'_{G^{(1)}(\varphi)}, \ldots, A'_{G^{(t-2)}(\varphi)}$ are surjective and
$A'_{G^{(t-1)}(\varphi)}$ has image $G^{(t)}(\varphi)$, which lies in $R$.
Note that \eqref{diag:first-return} is a \emph{diagram} in the sense of \cite{burstall-wood}, i.e., a directed graph whose vertices (nodes) are mutually orthogonal subbundles $\psi_i$ with sum $\CC^n$; for each ordered pair $(i,j)$, the arrow (i.e., directed edge) from $\psi_i$ to $\psi_j$ represents the second fundamental form $A'_{\psi_i,\psi_j}$; the absence of that arrow indicates that the second fundamental form $A'_{\psi_i,\psi_j}$ is known to vanish.

Then we have the following necessary condition for a harmonic map to be of finite uniton number which was previously known for harmonic maps from the Riemann sphere $S^2$, cf.\ \cite[Proposition 1.9]{burstall-wood}, by using vanishing theorems only applicable in that case.

\begin{theorem}\label{th:first-return} Let $\varphi:M \to G_k(\C^n)$ be a harmonic map of finite uniton number.  Then, either $\varphi$ is strongly isotropic, or its first return map is nilpotent, i.e., $c(\varphi)^k =0$ for some $k \in \N$.
\end{theorem}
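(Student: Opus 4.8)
The plan is to deduce the theorem from the bounded-powers criterion in the form of Corollary \ref{cor:T criterion for ha map}: the map $\varphi:M\to G_k(\C^n)$ is of finite uniton number if and only if the maximum power of $\la^{-1}$ occurring in the iterates $T^r$ $(r\in\N)$ of $T=T_\varphi$ stays bounded. If $\varphi$ is strongly isotropic we are in the first alternative and there is nothing to prove, so we may assume $\varphi$ has finite isotropy order $t$ and must show $c(\varphi)$ is nilpotent. The strategy is to compute, for a section $s\in\Gamma(\varphi)$, the coefficient of the lowest power of $\la$ appearing in $T^r s$ and to identify it, after the right number of steps, with a power of the first return map $c(\varphi)$. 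Throughout I use the decomposition $T=-\la^{-1}A^{\varphi}_z+D^{\varphi}_z$ from \eqref{T} together with the Grassmannian formulae \eqref{second-f-f}: $A^{\varphi}_z$ is tensorial and interchanges $\varphi$ and $\varphi^{\perp}$, while $D^{\varphi}_z$ is a derivation preserving each of $\varphi$ and $\varphi^{\perp}$.

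The heart of the argument is a bookkeeping of how powers of $\la^{-1}$ are gained. Selecting the $-\la^{-1}A^{\varphi}_z$ term of $T$ (an \emph{$A$-step}) lowers the power of $\la$ by one and switches between $\varphi$ and $\varphi^{\perp}$, whereas selecting the $D^{\varphi}_z$ term (a \emph{$D$-step}) preserves the power and stays within $\varphi^{\perp}$ (or within $\varphi$). Two facts control which steps survive. First, by the definition of the Gauss bundles, $\pi_{(G^{(i)}(\varphi))^{\perp}}\pa_z$ maps $\Gamma(G^{(i)}(\varphi))$ into $\Gamma(G^{(i+1)}(\varphi))$, so a $D$-step raises the harmonic-sequence index by at most one; hence climbing from $G^{(1)}(\varphi)$ to $G^{(t)}(\varphi)$ needs at least $t-1$ $D$-steps, and when performed in exactly $t-1$ steps the accumulated coefficient is forced to be $A'_{G^{(t-1)}(\varphi)}\circ\cdots\circ A'_{G^{(1)}(\varphi)}$. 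Second, an $A$-step out of $G^{(i)}(\varphi)$ lands in $\varphi$ and equals $A'_{G^{(i)}(\varphi),\varphi}=\pi_{\varphi}\pa_z|_{G^{(i)}(\varphi)}$; a short computation using $\langle \pa_z s,u\rangle=-\langle s,\pa_{\zbar}u\rangle$ for $s\in\Gamma(G^{(i)}(\varphi))$, $u\in\Gamma(\varphi)$, together with $A^{\varphi}_{\zbar}(u)\in\Gamma(G^{(-1)}(\varphi))$ and the orthogonality $G^{(i)}(\varphi)\perp G^{(-1)}(\varphi)$ for $i+1\le t$, shows that $A'_{G^{(i)}(\varphi),\varphi}=0$ for $1\le i\le t-1$, while $A'_{G^{(t)}(\varphi),\varphi}$ is the surviving return map, i.e. the last factor of $c(\varphi)$.

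These two facts pin down the unique way to accumulate $A$-steps: the only nonvanishing $A$-steps are $\varphi\to G^{(1)}(\varphi)$ and $G^{(t)}(\varphi)\to\varphi$, and between them one is forced to climb monotonically through the harmonic sequence by $D$-steps. Consequently a single trip around the diagram \eqref{diag:first-return}, namely exactly $t+1$ applications of $T$, lowers the power of $\la$ by two and multiplies the leading coefficient by $c(\varphi)$; since the path realizing the lowest power is unique, there is no cancellation. Iterating, the coefficient of $\la^{-2m}$ in $T^{m(t+1)}s$ is, up to a nonzero constant, equal to $c(\varphi)^m s$, and this is the lowest power present. Boundedness of the powers of $\la^{-1}$ — which is equivalent to finiteness of the uniton number by Corollary \ref{cor:T criterion for ha map} — therefore forces $c(\varphi)^m=0$ for all large $m$, so $c(\varphi)$ is nilpotent.

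The main obstacle is the step carried out in the second paragraph: proving that the lowest-$\la$-power term of $T^{t+1}$ on $\Gamma(\varphi)$ is realized by a \emph{unique} path, and so equals exactly $c(\varphi)$, with no competing contributions that could cancel it. This needs both the forward-reach bound for $D$-steps and, crucially, the vanishing $A'_{G^{(i)}(\varphi),\varphi}=0$ for $i<t$, which is precisely where the hypothesis that the isotropy order equals $t$ enters; the identity $\pa_{\zbar}A^{\varphi}_z=[A^{\varphi}_z,A^{\varphi}_{\zbar}]$ and the orthogonality properties of the harmonic sequence from \cite{burstall-wood} are the tools used to establish it.
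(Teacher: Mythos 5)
Your proof is correct and follows essentially the same route as the paper: both decompose $T=-\la^{-1}A^{\varphi}_z+D^{\varphi}_z$, observe that around the cycle \eqref{diag:first-return} the only $\la^{-1}$-producing steps are the two external arrows (using the vanishing $A'_{G^{(i)}(\varphi),\varphi}=0$ for $1\le i\le t-1$, which you rederive by an adjoint computation and the paper encodes in the diagram), so that $k$ circuits contribute exactly $\la^{-2k}\,\image c^k$ as the extremal term, and then invoke the bounded-powers criterion. The only differences are cosmetic: you apply $T^r$ to individual sections of $\varphi$ via Corollary \ref{cor:T criterion for ha map}, whereas the paper tracks $\pi_{\varphi}\Ss_{k(t+1)}(T)$ via Corollary \ref{S-stabilizes}.
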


\begin{proof}
	Suppose that $\varphi$ is of finite isotropy order $t$.  Write $\psi_i = G^{(i)}(\varphi)$ for $i =0,1,\ldots, t-1$ and $\psi_t = R$ so we have a diagram
\vspace{2ex}	
	\begin{equation}
\begin{gathered}\label{diag:circuit}
%\xymatrixrowsep{1.5pc}
\xymatrixcolsep{3pc}
\xymatrix{
	\psi_0 \ar[r]_{A'_{\psi_0}} & \ \psi_1
	\ar[r]_{A'_{\psi_1}}& \cdots \hspace{-4em} &\hspace{-3em} \ar[r]_{A'_{\psi_{t-2}}} &\psi_{t-1} \ar[r]_{A'_{\psi_{t-1}}} & \psi_t
\ar@/_1.2pc/[lllll]
}
\end{gathered}
\end{equation}	
Since $\psi_0 = \varphi$ but the other $\psi_i$ are in $\varphi^{\perp}$, $A^{\varphi}_z$ has components
$A'_{\psi_0} = A'_{\psi_0,\psi_1}$ and $A'_{\psi_t} = A'_{\psi_t,\psi_0}$.  Further, for $0 < i < t$,
$D^{\varphi}_z(\psi_i) = A'_{\psi_i,\psi_{i+1}}(\psi_i)$ \ $\mod \psi_i$.
Hence, from the second formula for $T$ in \eqref{T},
the highest power of $\la^{-1}$ in $\pi_{\varphi}\Ss_{t+1}(T)$ is $\la^{-2}$ which is only achieved by the composition of all the second fundamental forms which make up $c=c(\varphi)$, thus
$\pi_{\varphi}\Ss_{t+1}(T) = \la^{-2} \image c \mod \la^{-1}\HH_+$.  Similarly, for any $k \in \N$,
$\pi_{\varphi}\Ss_{k(t+1)}(T) = \la^{-2k} \image c^k \mod \la^{-2k+1}\HH_+$.
	If $\varphi$ has finite uniton number, by Corollary \ref{S-stabilizes}, the powers of $\la^{-1}$ in $\Ss_i(T)$ are bounded for $i \in \N$; it follows that $\image c^k$ must be zero for $k$ large enough.
\end{proof}

\begin{remark}\rm \label{rem:circuit} Given any diagram \eqref{diag:circuit} of at least three subbundles, by \cite[Proposition 1.6]{burstall-wood}, all the $\psi_i$ are automatically harmonic maps into Grassmannians.  For each $i$, set $c_i$ equal to the composition of second fundamental forms in the cycle
$\psi_i \to \psi_{i+1} \to \cdots \to \psi_t  \to \psi_0 \to \psi_1 \to \cdots \to \psi_i$.
Clearly, \emph{either} all $c_i$ are nilpotent \emph{or} none of the $c_i$ is nilpotent.  Thus, \emph{if \emph{any} $c_i$ is not nilpotent, then \emph{none} of the $\psi_i$ is of finite uniton number}.
\end{remark}

As a first application we show that the converse of Proposition \ref{prop:str-iso} holds for $k=1$; this is well-known for $M = S^2$ \cite{segal}.

\begin{theorem}\label{th:CPn}  Let $\varphi:M \to \CP^{n-1}$ be of finite uniton number.  Then $\varphi$ is strongly isotropic.  In fact,
 $\varphi = G^{(i)}(f)$ for some holomorphic map 	$f:M \to \CP^{n-1}$ and some $i \in \{0,1,\ldots,n-1\}$.
\end{theorem}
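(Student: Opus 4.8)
The plan is to derive this from the necessary condition of Theorem \ref{th:first-return} together with the one feature special to projective space: since $k=1$, every member of the harmonic sequence $G^{(i)}(\varphi)$ is a \emph{line} subbundle of $\CC^n$. The first thing I would record is that, consequently, if $\varphi$ had finite isotropy order $t$, its first return map $c(\varphi)$ would be an endomorphism of the line bundle $\varphi$, i.e.\ multiplication by a smooth function; and for such an endomorphism nilpotency is the same as vanishing. By Theorem \ref{th:first-return}, $\varphi$ is either strongly isotropic or $c(\varphi)$ is nilpotent, so to prove the first assertion it suffices to rule out $c(\varphi)=0$ whenever the isotropy order is finite.

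To do this I would inspect the factors of
$c(\varphi) = A'_{G^{(t)}(\varphi),\varphi}\circ A'_{G^{(t-1)}(\varphi)}\circ\cdots\circ A'_{G^{(1)}(\varphi)}\circ A'_{\varphi}$.
The factors $A'_{\varphi},\ldots,A'_{G^{(t-1)}(\varphi)}$ are each surjective onto the next Gauss bundle (as recorded after diagram \eqref{diag:first-return}), hence, being morphisms of line bundles, are nonzero off a discrete subset of $M$. For the closing factor, the isotropy order being \emph{exactly} $t$ means that $\varphi$ is not orthogonal to $G^{(t+1)}(\varphi)=\image A'_{G^{(t)}(\varphi)}$; since $\varphi\subset (G^{(t)}(\varphi))^{\perp}$ one checks from \eqref{second-f-f} that $A'_{G^{(t)}(\varphi),\varphi}=\pi_{\varphi}\circ A'_{G^{(t)}(\varphi)}$, so this factor is nonzero precisely because of that failure of orthogonality. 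A composition of line-bundle morphisms, each nonzero off a discrete set, is again nonzero off a discrete set, so $c(\varphi)\neq 0$, contradicting its nilpotency. Hence $\varphi$ must be strongly isotropic, which is the first assertion.

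For the second assertion I would use strong isotropy to make all the $G^{(i)}(\varphi)$, $i\in\Z$, mutually orthogonal line subbundles of $\CC^n$; only finitely many can then be nonzero, say $G^{(i)}(\varphi)\neq 0$ exactly for $p\le i\le q$ with $p\le 0\le q$ and $q-p\le n-1$. The leftmost term satisfies $G''(G^{(p)}(\varphi))=G^{(p-1)}(\varphi)=0$, so $f:=G^{(p)}(\varphi)$ is a holomorphic subbundle, i.e.\ a holomorphic map $M\to\CP^{n-1}$. Because the sequence is strongly isotropic, $G'$ and $G''$ act as mutual inverses on consecutive (nonzero) Gauss bundles---here the inclusions $G'(G''(\cdot))\subset\cdot$ of \cite[Proposition 2.3]{burstall-wood} are equalities---so applying $G'$ exactly $-p$ times to $f$ returns $\varphi$. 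That is, $\varphi=G^{(i)}(f)$ with $i=-p\in\{0,1,\ldots,n-1\}$.

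The step I expect to demand the most care is the identification $A'_{G^{(t)}(\varphi),\varphi}=\pi_{\varphi}\circ A'_{G^{(t)}(\varphi)}$ and the verification that this closing form is nonzero \emph{exactly} when the isotropy order is $t$, since this is what converts the arithmetic of the first return map into a statement about orthogonality; alongside this, one must justify that a composition of generically nonzero line-bundle morphisms does not vanish identically. The remaining points---the finiteness and termination of the harmonic sequence, and the recovery of $\varphi$ from the holomorphic endpoint $f$---are routine bookkeeping with the harmonic sequence and the equivalent description of isotropy order via mutual orthogonality of the Gauss bundles.
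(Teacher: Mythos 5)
Your proposal is correct and follows essentially the same route as the paper: invoke Theorem \ref{th:first-return}, observe that on the rank-one bundle $\varphi$ nilpotency of $c(\varphi)$ forces $c(\varphi)=0$, derive a contradiction from the nonvanishing of the second fundamental forms around the cycle (the paper phrases this contrapositively as ``one of the homomorphisms $A'_{\psi_i}$ must be zero''), and then obtain $f$ from the termination of the harmonic sequence together with \cite[Proposition 2.3]{burstall-wood} and the dimension count $i\le n-1$. Your added care about the closing arrow $A'_{G^{(t)}(\varphi),\varphi}=\pi_\varphi\circ A'_{G^{(t)}(\varphi)}$ and about generic nonvanishing of holomorphic line-bundle morphisms merely makes explicit what the paper leaves tacit.
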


\begin{proof} Suppose that $\varphi$ is not strongly isotropic.  Then it has finite isotropy order, say $t$ and we have a diagram \eqref{diag:circuit} in which each subbundle $\psi_i$ is of rank $1$ and equals $G^{(i)}(\varphi)$.
By Theorem \ref{th:first-return} the first return map is nilpotent, but this means
one of the homomorphisms $A'_{\psi_i}$ must be zero.
Hence, $\varphi$ must be strongly isotropic.  It follows that its harmonic sequence
must terminate in both directions, i.e.,  $G^{(-i-1)}(\varphi) = G^{(j)}(\varphi) = 0$ for some $i,j \in \N$.  Taking the least such $i$, set $f = G^{(-i)}(\varphi)$.  Then \cite[Proposition 2.3]{burstall-wood}
$\varphi = G^{(i)}(f)$.  Since the Gauss bundles are mutually orthogonal, for dimension reasons, $i \leq n-1$.
\end{proof}

We next apply our results to an interesting class of maps related to the Toda equations \cite{bolton-pedit-woodward, bolton-woodward}.

\begin{definition} \cite{bolton-pedit-woodward} A harmonic map $\varphi:M \to \CP^{n-1}$ is called \emph{superconformal} if has the maximum possible finite isotropy order.
\end{definition}

Note that, for dimension reasons as in the last proof, the maximum possible finite isotropy order for a harmonic map to $\CP^{n-1}$ is $n-1$.  It follows that a superconformal harmonic map $\varphi:M \to \CP^{n-1}$ has a harmonic sequence which is \emph{cyclic of order $n$} in the sense that
$G^{(n+k)}(\varphi) = G^{(k)}(\varphi)$ for all $k \in \Z$; in particular $G^{(n)}(\varphi) = \varphi$.

We can use Theorem \ref{th:first-return} to give a simple proof of the following:

\begin{proposition} \label{prop:superconf}
A superconformal harmonic map $\varphi:M \to \CP^{n-1}$ is never of finite uniton number.
In fact, $\Ss_{jn}(T) = \la^{-2j}\HH_+$,  equivalently,
$W_{jn} = \la^{-2j}W$, for all $j \in \N$.
\end{proposition}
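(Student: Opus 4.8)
The plan is to prove the stronger ``in fact'' statement $\Ss_{jn}(T)=\la^{-2j}\HH_+$; the assertion that $\varphi$ is never of finite uniton number then follows at once from Corollary \ref{S-stabilizes}, since the powers of $\la^{-1}$ occurring in $\Ss_{jn}(T)$ are unbounded as $j\to\infty$. (The non-finiteness alone also follows directly from Theorem \ref{th:first-return}: for a superconformal map the harmonic sequence is cyclic of order $n$ with isotropy order $t=n-1$, so $R=G^{(n-1)}(\varphi)$ and the diagram \eqref{diag:circuit} is the full cycle $\psi_0\to\cdots\to\psi_{n-1}\to\psi_0$ with $\psi_i=G^{(i)}(\varphi)$ of rank one; each second fundamental form $A'_{\psi_i}$ is nonzero, so the first return map $c(\varphi)=A'_{\psi_{n-1}}\circ\cdots\circ A'_{\psi_0}$ is a nonzero endomorphism of the line bundle $\varphi$, hence not nilpotent, and Theorem \ref{th:first-return} forbids finite uniton number.)

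For the equality of subbundles I would first record two structural facts. Since $A^\varphi_z$ and $D^\varphi_z$ act on the $\C^n$-factor, they commute with multiplication by $\la$; hence $\la$ commutes with $T=-\la^{-1}A^\varphi_z+D^\varphi_z$, one has $\Ss_i(T)=\sum_{r=0}^i T^r\HH_+$, and each $\Ss_i=\Phi^{-1}W_{(i)}$ is shift-invariant, $\la\Ss_i\subset\Ss_i$. Granting the base case $\Ss_n(T)=\la^{-2}\HH_+$, the general statement follows by induction on $j$: for the lower bound, $\la^{-2}\Ss_{jn}=\sum_{r=0}^{jn}T^r(\la^{-2}\HH_+)=\sum_{r=0}^{jn}T^r\Ss_n\subset\Ss_{(j+1)n}$, and by the induction hypothesis this set equals $\la^{-2(j+1)}\HH_+$; for the upper bound, $\Ss_{(j+1)n}=\Ss_{jn}+T^{jn}\bigl(\sum_{s=1}^nT^s\HH_+\bigr)\subset\la^{-2j}\HH_+ + T^{jn}(\la^{-2}\HH_+)\subset\la^{-2(j+1)}\HH_+$, using $\sum_{s=1}^n T^s\HH_+\subset\Ss_n=\la^{-2}\HH_+$ and $T^{jn}\HH_+\subset\Ss_{jn}=\la^{-2j}\HH_+$. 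Thus $\Ss_{(j+1)n}=\la^{-2(j+1)}\HH_+$, and finally $W_{(jn)}=\Phi\Ss_{jn}=\la^{-2j}W$.

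The heart of the matter is the base case $\Ss_n(T)=\la^{-2}\HH_+$, where the superconformal structure enters. With $\CC^n=\bigoplus_{i=0}^{n-1}\psi_i$, $\psi_i=G^{(i)}(\varphi)$, and the formulas \eqref{second-f-f}, the operator $T$ is triangular for the cyclic order: $D^\varphi_z$ preserves the $\la$-degree and sends $\psi_i$ into $\psi_i\oplus\psi_{i+1}$, its off-diagonal part being $A'_{\psi_i}$ for $1\le i\le n-2$, and is diagonal on $\psi_0$ and $\psi_{n-1}$; while $-\la^{-1}A^\varphi_z$ lowers the $\la$-degree by one and is nonzero only on $\psi_0$ (value $\la^{-1}A'_{\psi_0}:\psi_0\to\psi_1$) and on $\psi_{n-1}$ (value $\la^{-1}A'_{\psi_{n-1}}:\psi_{n-1}\to\psi_0$). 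Hence $T$ never lowers the cyclic index, and the $\la$-degree drops only at the two arrows out of $\psi_0$ and $\psi_{n-1}$. For the upper bound, dropping the exponent by three forces three passages through those two arrows, costing at least $n+1$ applications of $T$; so in at most $n$ steps the $\la^{-1}$-power is at most two, giving $\Ss_n\subset\la^{-2}\HH_+$. For the lower bound, by shift-invariance it suffices that the $\la^{-2}$-coefficients of $\Ss_n$ fill $\C^n$: going twice around the two special arrows and advancing by the off-diagonal part of $D^\varphi_z$ reaches every $\psi_i$ at $\la$-degree $-2$ within $n$ steps, with leading coefficient the unique cyclic composition of the nonzero maps $A'_{\psi_j}$ (namely $c(\varphi)$ and its cyclic relatives), an isomorphism of line bundles; a short descending argument on the $\la^{-1}$-degree, peeling off the top coefficient and then using shift-invariance to recover the $\la^{-1}$ and $\la^0$ levels, yields $\la^{-2}\HH_+\subset\Ss_n$.

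The main obstacle will be this lower bound: one must check that the top ($\la^{-2}$) coefficient maps onto the various $\psi_i$ are genuinely nonzero, i.e.\ that there is no cancellation among contributing paths — here the one-directional cyclic diagram makes the relevant path unique, so the leading coefficient is a single composition of second fundamental forms — and then promote this generic surjectivity onto $\C^n$ to an equality of holomorphic subbundles by filling out zeros (as in \S \ref{subsec:the Gauss sequence}) and analytic continuation. The degree bookkeeping for the upper bound and the identification of the leading coefficient with $c(\varphi)$ parallel the computation already in the proof of Theorem \ref{th:first-return}, so I would lean on that throughout.
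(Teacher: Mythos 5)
Your proposal is correct and follows essentially the same route as the paper: the paper's proof likewise computes the action of $T$ on the cyclic decomposition $\CC^n=\bigoplus_{i}G^{(i)}(\varphi)$ (degree-preserving advancement by $D^{\varphi}_z$, degree-lowering only at the arrows $\varphi\to G^{(1)}(\varphi)$ and $G^{(n-1)}(\varphi)\to\varphi$), lists $\Ss_0,\Ss_1,\ldots,\Ss_n$ explicitly to get $\Ss_n=\la^{-2}\HH_+$, and then asserts that the formula for $\Ss_{jn}(T)$ follows. Your treatment merely makes explicit what the paper leaves implicit --- the path-counting upper bound, the no-cancellation of the leading $\la^{-2}$ coefficient, and the induction on $j$ using that $T$ commutes with $\la$ --- and your alternative quick argument via Theorem \ref{th:first-return} is exactly the observation the paper makes just before stating the proposition.
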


\begin{proof}
Diagram \ref{diag:first-return} has $R = G^{(n-1)}(\varphi)$, and is cyclic as above; further, all the second fundamental forms in the diagram are isomorphisms almost everywhere.
{}From the second formula for $T$ in \eqref{T} we have
$T(\varphi) = \la^{-1}G^{(1)}(\varphi)$ $\mod \varphi$ and
	$T(G^{(n)}(\varphi)) = \la^{-1}\varphi$ $\mod G^{(n)}(\varphi)$  but
	$T(G^{(i)}(\varphi)) = G^{(i+1)}(\varphi)$ $\mod G^{(i)}(\varphi)$ \ $(i= 1,\ldots, n-1)$.  Thus,
	\begin{eqnarray*}
		\Ss_0 &=& \HH_+, \\
		\Ss_1 &=& \la^{-1}\varphi + \la^{-1}G^{(1)}(\varphi) + \HH_+, \\
		\Ss_j &=& \la^{-2}\sum_{i=1}^{j-1}G^{(i)}(\varphi) + \la^{-1}\sum_{i=0}^j G^{(i)}(\varphi) + \HH_+ \quad
		j=2,\ldots, n-1,\\
		\Ss_n &=& \la^{-2}\sum_{i=1}^n G^{(i)}(\varphi) + \la^{-2}\varphi
		+ \la^{-1}\HH_+ + \HH_+  = \la^{-2}\HH_+.
	\end{eqnarray*}
The formula for $\Ss_{jn}(T)$ follows.	
\end{proof}

The following is a famous example of a superconformal map.

\begin{example} \label{ex:Clifford}
{\rm	For any $n=1,2,\ldots$, consider the smooth map $\varphi:\C \to \CP^{n-1}$ called the
	\emph{Clifford solution} defined in homogeneous coordinates by
	$\varphi = [F]$ where $F=(F_0,\ldots,F_{n-1}):\C \to \C^n$ is given by
	$F_i(z) = (1/\sqrt{n})\,\eu^{\omega^i z - \ov{\omega}^i \ov{z}}$
	with $\omega = \eu^{2\pi\ii/n}$.
Note that this map is harmonic as in Remark \ref{rem:circuit};
it is also isometric, and so defines a minimal immersion, see \cite{jensen-liao}.
	
A simple calculation shows that the $j$th $\pa'$-Gauss bundle, $j \in \Z$, is $G^{(j)}(\varphi) = [F^{(j)}]$  where
	$F^{(j)}_i(z) = (1/\sqrt{n})\,\omega^{ij} \eu^{\omega^i z - \ov{\omega}^i \ov{z}}$; we see that these are orthogonal to $\varphi$ for $j=1,\ldots, n-1$.  However
	$G^{(n)}(\varphi) = \varphi$ so that $\varphi$ is superconformal with harmonic sequence cyclic of order $n$.  Consequently, by Proposition \ref{prop:superconf}, the Clifford solution is not of finite uniton number.
	
When $n=3,4$ or $6$, the Clifford solution factors to an isometric minimal immersion of a torus into $\CP^{n-1}$ called a \emph{Clifford torus}.  When $n=4$
 this torus lies in $\RP^3 \subset \CP^3$ and is double-covered by a minimal torus in $S^3$, again called a Clifford torus, given, up to isometry, by the formula
	$$
	\R^2 \ni (x,y) \mapsto (1/\sqrt{2})(\cos 2x, \sin 2x, \cos 2y, \sin 2y) \in S^3
	$$
which factors to the torus $\R^2/\{\text{lattice generated by } (\pi,0), (0,\pi)\}$.

Given a Clifford solution $\varphi=[F]:\C \to \CP^{n-1}$, let $g(z)$ be the $n \times n$ matrix whose $(j+1)$st column is
$F^{(j)}(z)$; this defines a map $g:\C \to \U(n)$.  Then, by direct calculation $A^g_z\ ( = \frac12 g^{-1}g_z)$ is the constant normal matrix $A$ whose only non-zero entries are
$a_{ij} = 1/2$ when $i-j = 1 \mod n$; further $A^g_{\zbar} = -(A^g_z)^*$.   Define an extended solution $\Phi$ by \eqref{vacuum} with this $A$ and $\tau(z) = z$.  Then
$\Phi$ is a \emph{vacuum solution} as in \S \ref{subsec:constant potentials}. Note that the composition of $g$ with the natural projection $\U(n) \to \U(n)/\U(1) \times \U(n-1) = \CP^{n-1}$ is the Clifford solution $\varphi = [F]$; for this reason the Clifford solution is sometimes also called a vacuum solution.

We remark that $\Phi$ is not an extended solution for the Clifford solution.  In fact, in \cite{aleman-pacheco-wood-symmetry}, we show that  the Clifford solution has extended solution $W = \psi\H_+$ with \emph{constant potential} $\psi^{-1}\pa_z\psi = p(\la) := \la^{-1}A_m + A_k$, where $A_k$ is the part of $A$ in the block diagonal $\U(1) \times \U(n-1)$ and $A_m =  A-A_k$; for example, when
$n=3$,
$$p(\la) = \dfrac{1}{2}\begin{pmatrix}0 & 0 & \la^{-1} \\\la^{-1} & 0 & 0 \\ 0 & 1 & 0\end{pmatrix}.
$$
This gives $W = \exp\bigl((\la^{-1}A_m + A_k)z \bigr)\H_+ = G(\la, z)\H_+$ where
$G:S^1 \times \C \to \U(n)$ is given by
$$G(\la,z) = \exp\bigl((\la^{-1}A_m + A_k)z
	- (\la A_m^{\:*} + A_k^{\:*})\bar{z}\bigr).$$
 This map is an `extended framing' of $\varphi$ and
$\wt\Phi(\la,\cdot) = G(\la,\cdot)G(1,\cdot)^{-1}:M \to \U(n)$ is an extended solution for $\varphi$ with $\wt\Phi(-1,\cdot) = $ (the Cartan embedding of) $\varphi$.
Note that it follows from Theorem \eqref{constant potentials} that harmonic maps associated to $\Phi$ and $\wt\Phi$ are not of finite uniton number; the second of these confirming that the Clifford solution is not of finite uniton number.}
\end{example}

Note that Theorem \ref{th:first-return} implies, as in \cite[Example 4.2]{svensson-wood-twistor-lifts}, that for
a harmonic map of finite uniton number, $A^{\varphi}_z$ is \emph{nilpotent}, cf.\ Corollary \ref{a-1 nilpotent}. Indeed, for a harmonic map of isotropy order $1$, this is equivalent to nilpotency of $c(\varphi)$; for a harmonic map of isotropy order greater than $1$,
$A^{\varphi}_z$ is trivially always nilpotent.

The concept of superconformality can be extended to \emph{harmonic maps into spheres}
$S^{n-1}$.  Indeed, let $\pi:S^{n-1} \to \RP^{n-1}$ be the standard double cover of real projective $(n-1)$-space by the standard round sphere
and let  $i:\RP^{n-1} \to \CP^{n-1}$ be the standard inclusion.  Since the composition $i \circ \pi$  is totally geodesic and isometric, a smooth map $\varphi:M \to S^{n-1}$ is harmonic if and only if $i \circ \pi \circ \varphi:M \to \CP^{n-1}$ is harmonic.  We define the Gauss bundles of $\varphi$ to be those of $i \circ \pi \circ \varphi$ and thus define the isotropy order of $\varphi$, note that this is always odd, see \cite{bolton-pedit-woodward}.  We call a harmonic map $\varphi:M \to S^{n-1}$ superconformal if it is of maximal finite isotropy order \emph{for a harmonic map into $S^{n-1}$}.     There are two cases
\cite{bolton-pedit-woodward,bolton-woodward}:
(i) If $n$ is even, then, as for $\CP^{n-1}$, the maximum finite isotropy order is $n-1$ and such a map has a cyclic harmonic sequence with $i$th vertex $G^{(i)}(\varphi)$ \ $(i \in \N)$; the Clifford torus in $S^3$ above is of this type.
(ii) If $n$ is odd, then the maximum finite isotropy order is $n-2$ and diagram \eqref{first-return} has one vertex of dimension $2$ but the first return map is still an isomorphism;
this case includes non-full harmonic maps into an equatorial $S^{n-2}$ in $S^{n-1}$, described as in case (i).

In particular, again Theorem \ref{th:first-return} provides a simple proof of the following.

\begin{corollary} \label{cor:superm-sphere}
A superconformal harmonic map from a Riemann surface to a sphere is never of finite uniton number.
\end{corollary}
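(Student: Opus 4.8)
The plan is to deduce this directly from Theorem \ref{th:first-return}, in exactly the same spirit as the proof of Proposition \ref{prop:superconf} for $\CP^{n-1}$. First I would recall that, by definition, a superconformal harmonic map $\varphi:M \to S^{n-1}$ has maximal \emph{finite} isotropy order, so in particular it is of finite isotropy order and hence is \emph{not} strongly isotropic. Next I would pass to the composition $i \circ \pi \circ \varphi:M \to \CP^{n-1}$, which is harmonic (because $i \circ \pi$ is totally geodesic and isometric) and whose Gauss bundles and isotropy order are, by our definitions, precisely those of $\varphi$. Finiteness of the uniton number is likewise unaffected by this composition, since all the embeddings $S^{n-1} \hookrightarrow \RP^{n-1} \hookrightarrow \CP^{n-1} \hookrightarrow \U(n)$ are totally geodesic; thus it suffices to show that $i \circ \pi \circ \varphi$ is not of finite uniton number.

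Now I would invoke Theorem \ref{th:first-return}: were $i \circ \pi \circ \varphi$ of finite uniton number, then, since it is not strongly isotropic, its first return map $c(\varphi)$ would have to be nilpotent. The key point — already recorded in the discussion preceding the statement — is that for a superconformal map into a sphere the first return map is an \emph{isomorphism} almost everywhere. This is transparent in case (i), $n$ even, where the harmonic sequence is cyclic of order $n$ and every second fundamental form appearing in diagram \eqref{diag:first-return} is an isomorphism; and it also holds in case (ii), $n$ odd, where one vertex of the cycle has dimension $2$ yet the composition around the cycle remains invertible. An isomorphism of a non-zero bundle is never nilpotent, which contradicts the conclusion forced by Theorem \ref{th:first-return}; hence $\varphi$ cannot be of finite uniton number.

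The argument is essentially bookkeeping once Theorem \ref{th:first-return} is available, and the only place requiring genuine care is the odd case (ii): the rank-$2$ vertex means one cannot argue, as in the proof of Theorem \ref{th:CPn}, that some individual arrow $A'_{\psi_i}$ must vanish, and instead must verify that the \emph{composition} of all the second fundamental forms around the cycle is invertible. This is exactly the assertion that the first return map is still an isomorphism, and confirming its non-nilpotency in this borderline-dimensional situation is the sole obstacle; once that is in hand, the conclusion is immediate.
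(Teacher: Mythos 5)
Your proposal is correct and follows essentially the same route as the paper: the paper's proof is precisely the discussion preceding the corollary (superconformal means maximal \emph{finite} isotropy order, hence not strongly isotropic; in both parities of $n$ the first return map of $i \circ \pi \circ \varphi$ is an isomorphism, hence not nilpotent) combined with Theorem \ref{th:first-return}. The one point you flag as the ``sole obstacle'' --- invertibility of the first return map around the cycle in the odd case, despite the rank-$2$ vertex --- is not re-proved in the paper either, but taken from \cite{bolton-pedit-woodward,bolton-woodward}, so your argument matches the paper's in both structure and level of detail.
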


The following example shows that the converse to Theorem \ref{th:first-return} is false.

\begin{example} \label{ex:superconf}
{\rm	Let $\psi:M \to \CP^{n-1}$ be a superconformal map with $n \geq 5$;
	write $\psi_i = G^{(i)}(\psi)$ and set $\varphi = \psi_0 \oplus \psi_2$.  Then
	$\varphi:M \to G_2(\C^n)$ is a harmonic map: indeed the harmonicity equation \eqref{harmcond} is easily seen to be the sum of those for $\psi_0$ and $\psi_2$.

(Two other ways to see this are that $A'_{\varphi}$ consists of the second fundamental forms
	$A'_{\psi_0,\psi_1}$ and $A'_{\psi_2,\psi_3}$ which are holomorphic by \cite{burstall-wood}.  Or note that $\varphi$ has a `$J_2$-holomorphic twistor lift' $(\psi_0,\psi_1,\psi_2,\sum_3^n \psi_i)$ and so is harmonic by \cite{svensson-wood-unitons}.)
	
We see that $A'_{\varphi^{\perp}} \circ A'_{\varphi}$ has image
	$\psi_2$ so $\varphi$ has isotropy order $1$ with first return map $c(\varphi) = A'_{\varphi^{\perp}} \circ A'_{\varphi} = (A_z^{\varphi})^2|{\varphi}$.  This is nilpotent, in fact $c(\varphi)^2=0$.  However, calculating as in the proof of
Proposition \ref{prop:superconf}, we see that $\Ss_{jn}(T) = \la^{-4j}\HH_+$,
 equivalently, $W_{jn} = \la^{-4j}W$, so that \emph{$\varphi$ has nilpotent first return map but is not of finite uniton number}.

The above works for any domain $M$, compact or not; by taking $\psi$ to be the Clifford torus in
$\CP^5$, we get an example with compact domain.}
\end{example}

We now consider an arbitrary diagram, where a harmonic map
$\varphi:M \to G_k(\C^n)$ is the sum of the subbundles represented by some of the vertices.  Recall that the second fundamental forms between the various subbundles are represented by the arrows (directed edges) between the vertices.
Call an arrow \emph{external} if one of its vertices is in $\varphi$ and the other is in $\varphi^\perp$. A
 path is \emph{external} if it contains at least one external arrow.  A \emph{simple cycle of length $p$ for $\psi_0$} is a sequence of arrows $\psi_i \to \psi_{i+1}$, $i=0,1,\ldots, p-1$, with $\psi_0=\psi_p$ and $\psi_0,\psi_1,\ldots, \psi_{p-1}$ distinct vertices. The composition of second fundamental forms making up an external simple cycle $c$ for a vertex $\psi_i$ defines a linear endomorphism of $\psi_i$ which we also denote by $c$.   We have a test which follows from the bounded powers criterion:

\begin{theorem}\label{th:external}
\begin{enumerate}
\item[(i)] Suppose that there exist no external cycles.  Then $\varphi$ is of finite uniton number.
\item[(ii)] Suppose that, for some vertex, there is a \emph{unique} simple external cycle with $c$ not nilpotent.  Then $\varphi$ is not of finite uniton number.
\end{enumerate}
\end{theorem}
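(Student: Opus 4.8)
The plan is to track the highest power of $\la^{-1}$ occurring in the sequence $\Ss_i(T)$ and then invoke the bounded powers criterion, Corollary \ref{S-stabilizes}. Using the second expression for $T$ in \eqref{T}, namely $T = -\la^{-1}A^{\varphi}_z + D^{\varphi}_z$, together with \eqref{second-f-f} (so that $A^{\varphi}_z$ is tensorial and realises exactly the \emph{external} arrows of the diagram, while $D^{\varphi}_z$ acts as a derivation realising the \emph{internal} arrows), one sees that applying $T$ amounts to moving along the arrows of the diagram: each external arrow carries a single factor $\la^{-1}$ and each internal arrow carries none. Hence, starting from $\Ss_0 = \HH_+$ (which contains the constant sections $\CC^n = \bigoplus_j \psi_j$), the coefficient of $\la^{-m}$ in $\pi_{\psi_j}\Ss_i(T)$ is, modulo lower-order terms, a sum over all walks in the diagram of length at most $i$ ending at $\psi_j$ and traversing exactly $m$ external arrows, of the corresponding compositions of second fundamental forms. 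Thus the power of $\la^{-1}$ attached to a term equals the number of external arrows in its walk.

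For (i), I would note that if there are no external cycles then no walk can traverse the same external arrow twice, since a repeated external arrow produces a closed walk of positive weight whose cycle decomposition contains an external simple cycle. Consequently every walk uses at most as many external arrows as the finite diagram contains, so the power of $\la^{-1}$ occurring in $\Ss_i(T)$ is bounded uniformly in $i$. By Corollary \ref{S-stabilizes}, $\varphi$ is of finite uniton number.

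For (ii), let $C$ be the unique simple external cycle, through a vertex $\psi_0$, of length $p$ and containing $w \ge 1$ external arrows, with associated endomorphism $c$ of $\psi_0$. The idea is to isolate the walk that circuits $C$ exactly $k$ times: it has length $kp$, traverses $kw$ external arrows, and contributes $\pm\la^{-kw}c^k$ (applied to a frame of $\psi_0$) to $\pi_{\psi_0}\Ss_{kp}(T)$. The decisive point is that this contribution cannot be cancelled. Indeed, any walk traversing $kw$ external arrows must, in its cycle decomposition, contain $k$ copies of $C$ --- because $C$ is the only simple cycle of positive weight, every positive-weight simple cycle occurring must be $C$ --- and so has length at least $kp$; moreover length exactly $kp$ forces the walk to be precisely $k$ circuits of $C$, leaving no room for internal excursions. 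Hence the $\la^{-kw}$ coefficient of $\pi_{\psi_0}\Ss_{kp}(T)$ is $\pm c^k$, which is non-zero since $c$ is not nilpotent. Therefore arbitrarily large powers of $\la^{-1}$ occur in $\Ss_i(T)$, and by Corollary \ref{S-stabilizes}, $\varphi$ is not of finite uniton number.

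The main obstacle is exactly this absence of cancellation in (ii): a priori the coefficient of $\la^{-kw}$ is a sum over many walks and could vanish even though $c^k \neq 0$. The device that resolves it is the grading by walk length, which converts uniqueness of the external simple cycle into the statement that every weight-$kw$ walk has length at least $kp$, so that at the minimal admissible length the pure circuit $C^k$ stands alone and its contribution $c^k$ survives. I would be careful about the precise reading of the hypothesis: it must mean uniqueness of the simple external cycle \emph{in the whole diagram}, so that $C$ is the sole positive-weight simple cycle. It is this global uniqueness, rather than uniqueness merely among cycles through $\psi_0$, that guarantees every positive-weight simple cycle entering a decomposition equals $C$, and hence that external cycles elsewhere and internal excursions can neither lower the length below $kp$ nor contaminate the leading term.
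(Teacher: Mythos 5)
Your proposal is correct and follows essentially the same route as the paper: both parts rest on the bounded powers criterion (Corollary \ref{S-stabilizes}), with (i) bounding the number $|w|_e$ of external arrows along any path, and (ii) extracting $\la^{-|c|_e k}\,\image c^k$ inside $\Ss_{pk}(T)$ from $k$ circuits of the cycle. Your length-grading argument ruling out cancellation at the minimal length $kp$, and your observation that the uniqueness hypothesis must be read globally (since a closed walk based at the given vertex can pick up external simple cycles based at other vertices), make explicit two points that the paper's very brief proof of (ii) leaves implicit.
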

\begin{proof} (i)  For each path $w$, let $|w|_e$ be the number of external arrows in $w$. Since there exist no external cycles,  $|w|_e$ is bounded for an arbitrary path $w$. Hence, the sequence $\Ss_i(T)$ stabilizes, since the increasing of the power of $\lambda^{-1}$ is exclusively due to the contribution of the external arrows. We conclude, by Corollary \ref{S-stabilizes}, that $\varphi$ is of finite uniton number.

(ii) Suppose that for the vertex $\psi_i$ there exists a unique simple external cycle $c$.  Let $p$ be the length of the simple cycle and $|c|_e$ be the number of external arrows in $c$. Then
$\la^{-|c|_e}\image c\subseteq \Ss_{p}$ and, for any $k>0$, $\la^{-|c|_ek}\image c^k\subseteq \Ss_{pk}$. Consequently, if $c$ is not nilpotent, the sequence $\Ss_i(T)$ does not stabilize and $\varphi$ is not of finite uniton number.
\end{proof}

We note that Proposition \ref{prop:str-iso} and Theorem \ref{th:first-return} follow from parts (i) and (ii) of this result.  Here is another application of part (i). Say that a harmonic map $\varphi:M \to G_k(\C^n)$ \emph{has a $J_1$-holomorphic twistor lift $(\psi_0,\ldots,\psi_d)$} if the $\psi_i$ form a diagram, the second fundamental forms $A'_{\psi_i, \psi_j}$ with $j \leq i$ are zero, and
$\varphi = \sum \psi_{2i}$; see, for example, \cite{svensson-wood-twistor-lifts}.  Since such a diagram has no external cycles we have:

\begin{corollary} \label{cor:no-loops}
All harmonic maps $\varphi:M \to G_k(\C^n)$ with a $J_1$-holomorphic twistor lift are of finite uniton number.
In particular, all harmonic maps with an $S^1$-invariant extended solution are of finite uniton number.
\end{corollary}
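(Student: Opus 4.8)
The plan is to prove Corollary \ref{cor:no-loops} by reducing it to Theorem \ref{th:external}(i), namely to the statement that a diagram with no external cycles yields a harmonic map of finite uniton number. The first claim is about a $J_1$-holomorphic twistor lift $(\psi_0,\ldots,\psi_d)$ with $\varphi = \sum \psi_{2i}$; the second is about $S^1$-invariant extended solutions, which I will handle by exhibiting an appropriate diagram. So the core work is to check, in each case, that the relevant diagram has no external cycles.

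For the first assertion, I would argue as follows. By the defining property of a $J_1$-holomorphic twistor lift, the second fundamental forms $A'_{\psi_i,\psi_j}$ vanish whenever $j \leq i$; that is, the only possibly nonzero arrows go strictly forwards, from $\psi_i$ to $\psi_j$ with $i < j$. Consequently \emph{any} directed path in the diagram is strictly increasing in the index, so the diagram contains no cycles at all, external or otherwise. A fortiori there are no external cycles, and Theorem \ref{th:external}(i) immediately gives that $\varphi = \sum \psi_{2i}$ is of finite uniton number. This is the quick half.

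For the second assertion, I would use the structure of $S^1$-invariant extended solutions recalled in the Introduction: as in \cite[Proposition 10.4]{uhlenbeck}, such a solution has a uniton factorization with \emph{nested} holomorphic, superhorizontal unitons $\alpha_1 \subset \alpha_2 \subset \cdots \subset \alpha_m$. The plan is to extract from this nested filtration a diagram whose vertices $\psi_i$ are the successive orthogonal complements $\psi_i = \alpha_{i+1} \ominus \alpha_i$ (with suitable conventions at the ends), so that $\CC^n = \bigoplus_i \psi_i$. The key point is that the superhorizontality condition \eqref{superhor}, $\pa_z(\alpha_i) \subseteq \alpha_{i+1}$, together with holomorphicity $\pa_{\bar z}(\alpha_i) \subseteq \alpha_i$, forces the second fundamental forms between these graded pieces to be strictly forward-shifting: $A'_{\psi_i,\psi_j}$ can be nonzero only for $j = i+1$. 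Thus again every directed path strictly increases the index, the diagram has no cycles, hence no external cycles, and Theorem \ref{th:external}(i) applies. Alternatively, one observes that a nested superhorizontal filtration is precisely a $J_1$-holomorphic twistor lift, so the second statement reduces directly to the first.

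The main obstacle I anticipate is the bookkeeping in the second part: translating Uhlenbeck's nested-uniton description into a genuine \emph{diagram} in the sense of \cite{burstall-wood} (mutually orthogonal subbundles summing to $\CC^n$, with arrows recording second fundamental forms) and verifying that superhorizontality really does kill all arrows except the forward ones. This requires matching the $S^1$-invariant uniton factorization against the definition of a $J_1$-holomorphic twistor lift and checking that $\varphi$ (at $\lambda = -1$) is recovered as the appropriate sum of graded pieces. Once the diagram is correctly identified, the absence of cycles is a purely combinatorial consequence of the strictly-forward arrow structure, and the finiteness of the uniton number follows formally from Theorem \ref{th:external}(i); no estimates or analytic input are needed beyond what has already been established.
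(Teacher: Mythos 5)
Your proposal is correct and takes essentially the same route as the paper: both assertions reduce to Theorem \ref{th:external}(i), the first because a $J_1$-holomorphic twistor lift has only strictly forward arrows (hence no cycles, external or otherwise), and the second via exactly the paper's construction $\psi_i = \alpha_{i+1} \ominus \alpha_i$ (with $\alpha_0$ the zero bundle and $\alpha_{m+1} = M\times\C^n$) turning Uhlenbeck's nested holomorphic superhorizontal unitons into such a twistor lift. Your extra observation that superhorizontality confines forward arrows to $j=i+1$ is harmless but unnecessary, since holomorphicity of the flag already kills all backward arrows, which is all the no-cycle argument needs.
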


The last part follows as such a harmonic map has a $J_1$-holomorphic twistor lift $(\psi_0,\ldots,\psi_m)$ constructed from the unitons $\alpha_i$ of
\eqref{superhor} by $\psi_i = \alpha_{i+1} \ominus \alpha_i$ (where we take $\alpha_0$ to be the zero vector bundle and $\alpha_{m+1}$ to be the trivial bundle $M\times \C^n$).

We give an example illustrating that \emph{(a)} part (ii) of the proposition is false without the word `unique'; \emph{(b)} we cannot, in general, tell whether the uniton number of a harmonic map into a Grassmannian is finite just from a diagram.

\begin{example} \label{ex:G2C4}
\rm {\rm (a)}	As in \cite[Example 8.3]{svensson-wood-twistor-lifts} (with a slight change of notation), for any Riemann surface
	$M$, set
	\begin{equation} \label{W-G2C4}
	W=\spa\{H +\lambda^2 K\} + \lambda h_{(1)}  +  \lambda^2 h_{(2)} +  \lambda^3\H_+
	\end{equation}
	where $H, K:M \to \C^4$ are meromorphic maps (equivalently, meromorphic sections of the trivial bundle
	$\CC^4$), with $H$ \emph{full}, i.e., not lying in a proper trivial subspace of\/ $\C^4$, and $h$ is the holomorphic map $M \to \CP^3$, equivalently, the rank $1$ holomorphic subbundle of\/ $\CC^4$,
 given by $h = \spa\{H\}$.
For $i=1,2,\ldots$, let $h_{(i)}$ denote the $i$th osculating subbundle of\/ $h$, given by the span of\/ $H$ and its derivatives of orders up to $i$, cf.\ \eqref{Gauss}; the corresponding holomorphic map $h_{(i)}:M \to G_t(\C^n)$ ($t \in \{0,1,\ldots, i+1\}$) is called the \emph{$i$th associated curve} ---
note that the Gauss bundle $G^{(i)}(h)$ equals $h_{(i-1)}^{\perp} \cap h_{(i)}$.
It can be checked that $W$ satisfies the conditions \eqref{W-eq}; in fact,
by the formulae in \cite{svensson-wood-unitons}, $W = \Phi\H_+$ for the extended solution given by the product of unitons:
	\begin{equation} \label{Uhl-fact}
	\Phi = (\pi_{h_{(2)}} + \la \pi_{h_{(2)}}^{\perp})(\pi_{h_{(1)}} + \la \pi_{h_{(1)}}^{\perp})(\pi_{\psi_0} + \la \pi_{\psi_0}^{\perp})
	\end{equation}
	where $\psi_0 = \spa\{H + \pi_{h_{(2)}}^{\perp}K\}$.
 Then $\varphi:= \Phi(-1,\cdot)$ is a harmonic map $M \to \U(n)$ of finite uniton number.
Since $W$ satisfies the symmetry condition $W_{\la} = W_{-\la}$, this harmonic map actually has values in a Grassmannian (cf.\ \cite[\S 5.1]{svensson-wood-unitons}); on putting $\la=-1$ in \eqref{Uhl-fact}, we see that $\varphi = $ (the Cartan embedding of) $ \psi_0 \oplus \psi_2:M \to G_2(\C^4)$ where $\psi_2 = G^{(2)}(h)$.  Set $\psi_1:= G^{(1)}(h)$. This is orthogonal to $\varphi$ and so we have $\varphi^{\perp} = \psi_1 \oplus \psi_3$ where $\psi_3$ is the orthogonal complement of\/ $\psi_0 \oplus \psi_1 \oplus \psi_2$ in $\CC^4$.

By \cite{svensson-wood-twistor-lifts}, the only possible non-zero (i.e., not identically zero) second fundamental forms are those given by the diagram:
	\begin{equation}
	\begin{gathered}\label{diag:G24}
	\xymatrixrowsep{1.8pc}\xymatrix{
		\psi_0  \ar[rd] \ar[r] &\psi_1 \ar[ld] \\
		\psi_2 \ar[r]  \ar[u] &\psi_3 \ar[u]}
	\end{gathered}
	\end{equation}
	
(In fact, $Z_i :=   (A^{\varphi}_z)^i(\C^4)$, $i=0,1,\ldots$, gives a filtration of $\CC^4$ with $(A^{\varphi}_z)(Z_i) = Z_{i+1}$; then $\psi_i = Z_{i+1}^{\perp} \cap Z_i$, which leads to this diagram.)

Note that $W$ is $S^1$-invariant if and only if $K$ lies in $h_{(2)}$; equivalently, $\psi_0 = h$.  In that case,
$\psi_i = h_{(i)}$, $i=0,1,2,3$, and the second fundamental forms other than $A'_{\psi_i,\psi_{i+1}}$, $i=0,1,2$, are zero.
Now, fullness of $h$ implies that these second fundamental forms are non-zero in all cases. Further, a simple calculation shows that, when $W$ is not $S^1$-invariant, i.e., $K$ does not lie in $h_{(2)}$,
 $A'_{\psi_2,\psi_0}$ is also non-zero.
So we have a cycle of non-zero second fundamental forms $\psi_0 \to \psi_1 \to \psi_2 \to \psi_0$.  Since the first two arrows in this cycle are external, i.e., components of $A^{\varphi}_z$, following that cycle shows that
	\emph{$\Ss_3(T)$ contains a non-zero multiple of $\la^{-2}\psi_0$.  However, $\Phi$ and so $\varphi$, is of finite uniton number by construction}.
	Note that this does not contradict Theorem \ref{th:external}(ii) as there are other cycles on $\psi_0$ with $c$ not nilpotent, for example,
	$\psi_0 \to \psi_3 \to \psi_1 \to \psi_2 \to \psi_0$.
	
{\rm (b)} In contrast, consider a superconformal harmonic map $\varphi:M\to\CP^{3}$.  This has Gauss sequence:
\vspace{1ex}
\begin{equation}%BED-W 2.2
\begin{gathered}
%\xymatrixrowsep{1.5pc}\xymatrixcolsep{1.5pc}
\xymatrix{
	 \varphi \ar[r] & \varphi_1 \ar[r]
	 & \varphi_2 \ar[r] & \varphi_3 \ar@/_1.0pc/[lll]}
\end{gathered}
\end{equation}
Then, $\psi=\varphi_2\oplus \varphi_3$  and $\psi^\perp=\varphi\oplus \varphi_1$ are harmonic (cf.\ \cite[Proposition 1.6]{burstall-wood}). Choose a holomorphic line subbundle $R_h\neq \varphi$ of $\psi^\perp$. Its orthogonal complement $R_a$ in $\psi^\perp$ is antiholomorphic in $\psi^\perp$. For example, if we consider the Clifford solution $\varphi=[F]:\C\to \CP^3$, as defined in Example \ref{ex:Clifford},  then we can choose the holomorphic subbundle $R_h=[\bar z F+F^{(1)}]$, whose orthogonal complement is $R_a=[F-zF^{(1)}] $. The subbundles $\alpha=\varphi_2$ and $\beta= R_h$ satisfy the  forward replacement conditions cf.\ \cite[Theorem 2.4]{burstall-wood} with respect to $\psi$,
so that $\alpha \oplus \beta$ is a uniton. Hence,
$\varphi=\varphi_3\oplus R_h$ is harmonic.
Set
$$
\psi_0=R_h,\quad \psi_1=\varphi_2, \quad \psi_2=\varphi_3, \quad \psi_3=R_a.
$$
The corresponding diagram coincides with \eqref{diag:G24}, with all arrows non-vanishing, but now the harmonic map $\varphi$ cannot be of finite uniton number because $\psi$ is not.
\end{example}

\end{document}